\newcommand{\na}{\mathbb{A}}
\newcommand{\C}{\mathbb{C}}
\newcommand{\F}{\mathbb{F}}
\newcommand{\HP}{\mathbb{H}}
\renewcommand{\P}{\mathbb{P}}
\newcommand{\Q}{\mathbb{Q}}
\newcommand{\R}{\mathbb{R}}
\newcommand{\Z}{\mathbb{Z}}
\newcommand{\osp}{\mathfrak{osp}}
\newcommand{\fsl}{\mathfrak{sl}}
\newcommand{\Aff}{\mathrm{Aff}}
\newcommand{\Aut}{\mathrm{Aut}}
\newcommand{\SAut}{\mathrm{SAut}}
\newcommand{\TAut}{\mathrm{TAut}}
\newcommand{\Card}{\mathrm{Card\,}}
\newcommand{\Core}{\mathrm{Core}}
\newcommand{\Elem}{\mathrm{Elem}}
\newcommand{\SElem}{\mathrm{SElem}}
\newcommand{\End}{\mathrm{End}}
\newcommand{\Ext}{\mathrm{Ext}}
\newcommand{\Gal}{{\mathrm {Gal}}}
\newcommand{\GL}{{\mathrm {GL}}}
\newcommand{\hdc}{{\mathrm{hdc}}}
\newcommand{\id}{\mathrm{id}}
\newcommand{\Image}{\mathrm{Im}}
\newcommand{\jac}{\mathrm{Jac}}
\newcommand{\Ker}{\mathrm{Ker}}
\newcommand{\lcm}{\mathrm{lcm}}
\newcommand{\Max}{\mathrm{Max}}
\newcommand{\New}{\mathrm{Newton}}
\newcommand{\PSL}{{\mathrm {PSL}}}
\newcommand{\sh}{{\mathrm {sh}}}
\newcommand{\SL}{{\mathrm {SL}}}
\newcommand{\SO}{{\mathrm {SO}}}
\newcommand{\St}{{\mathrm {St}}}
\newcommand{\rk}{\mathrm{rk}}
\newcommand{\Spec}{{\mathrm {Spec}}\,}
\newcommand{\Tr}{{\mathrm {Tr}}}
\newcommand{\trdeg}{{\mathrm {trdeg}}\,}
\newcommand{\Sup}{{\mathrm {Sup}}\,}
\newcommand{\Supp}{{\mathrm {Supp}}\,}
\renewcommand{\d}{\mathrm{d}}
\renewcommand{\mod}{\,\mathrm{mod}\,}
\newtheorem*{NLcrit}{Nonlinearity Criterion}
\newtheorem{Lcrit}{Linearity Criterion}
\newtheorem*{MainA}{Theorem A}
\newtheorem*{MainA1}{Theorem A.1}
\newtheorem*{MainA2}{Theorem A.2}
\newtheorem*{MainB}{Theorem B}
\newtheorem*{MainC}{Theorem C}
\newtheorem*{MainC1}{Theorem C.1}
\newtheorem*{MainC2}{Theorem C.2}
\newtheorem*{MainD}{Theorem D}
\newtheorem*{DSLemma}{Drutu-Sapir's Lemma}
\newtheorem{Cor}{Corollary}
\newtheorem*{Cor*}{Corollary}
\newtheorem*{Cor1.1}{Corollary 1.1}
\newtheorem*{Cor1.2}{Corollary 1.2}
\newtheorem*{vdKthm}{van der Kulk's Theorem}
\newtheorem*{Cornu}{Cornulier's Theorem}
\newtheorem*{ExA}{Example A}
\newtheorem*{ExB}{Example B}
\newtheorem*{ExC}{Example C}
\newtheorem*{Nagaothm}{Nagao's Theorem \cite{N}}
\newtheorem{lemma}{Lemma}
\newcommand{\ch}{\operatorname{ch}}
\font\small=cmr10
\title{Linearity and Nonlinearity of Groups of Polynomial Automorphisms of the Plane}
\author{Olivier Mathieu
\footnote{
Research supported by the UMR 5208 du CNRS
and the International Center for Mathematics at SUSTech}}
\begin{document}

\maketitle

\begin{abstract} Given a field $K$,
we investigate which subgroups
of the  group $\Aut\, \na^2_K$  of polynomial automorphisms of the plane are linear or not.

The results are contrasted. The group
$\Aut\, \na^2_K$ itself is nonlinear, except if 
$K$ is finite, but it contains some large subgroups,
of "codimension-five" or more,  which are linear.  This
phenomenon  is specific to dimension two: it is easy to prove that any natural "finite-codimensional" subgroup  of $\Aut\,\na^3_K$ is nonlinear, even for a finite field  $K$.

When $\ch\,K=0$, 
we also look at a similar questions for 
f.g. subgroups, and the results are again
disparate. The group $\Aut\, \na^2_K$ has a one-related f.g. subgroup which is not linear. 
However, there is a  large
subgroup, of "codimension-three", which is locally linear but not linear.
\end{abstract}

\noindent
\centerline{\it This paper is respectfully dedicated to the memory of Jacques Tits.}

\vfill
\eject

{\it Summary} 

Introduction

1. Main Definitions and Conventions

2. Mixed Products

3. Linearity over Rings vs. over Fields

4. A Nonlinear f.g. Subgroup of $\Aut_0\,\na_\Q^2$

5. The Linear Representation of $\Aut_{1}\,\na_K^2$

6. The Linear Representation of $\SAut_{0}^{<n}\,\na_K^2$

7. Semi-algebraic Characters

8. A Nonlinearity Criterion for $\Aut_S\,\na_K^2$

9. Two Linearity Criteria for $\Aut_S\,\na_K^2$

10. Examples of Linear or Nonlinear $\Aut_S\,\na_K^2$

11. Nonlinearity of Finite-Codimensional Subgroups of $\Aut\,\na_K^3$

\section*{Introduction}

Let $K$ be a field given once and for all, and
let $\Aut\,\na^2_K$ be the group of polynomial automorphisms  of the affine plane 
$\na^2_K$ over $K$.

\smallskip
\noindent
{\it 0.1 General Introduction} 

\noindent  A group $\Gamma$ is called {\it linear over a ring}, 
respectively {\it linear over a field}, if
there is an embedding $\Gamma\subset \GL(n,R)$,
resp. $\Gamma\subset \GL(n,L)$, for some positive integer $n$ and some commutative ring $R$,
resp. some field $L$.

Various authors have shown that the automorphism groups of algebraic varieties share many properties with the linear groups, e.g.
the Tits alternative holds in
$\Aut\,\na^2_\C$ \cite{L}, see also 
\cite{BL}\cite{S00}\cite {BPZ}. However, these groups are not always linear \cite{FP}\cite{P}.
In this paper, we will investigate the following
related 
 
\centerline{\it Question: which subgroups  of 
$\Aut\,\na^2_K$ are indeed linear or not?}

\noindent 
{\it Answer:} 
roughly speaking, $\Aut\,\na^2_K$ contains large linear subgroups and  small ones which are not.

In order to be more specific, let us  consider the following subgroups

\centerline{$\Aut_0\,\na^2_K=\{\phi\in \Aut\,\na^2_K\mid \phi({\bf 0})={\bf 0})\}$, }

\centerline{$\SAut_0\,\na^2_K=\{\phi\in \Aut_0\,\na^2_K\mid\jac(\phi)=1\}$, and}

\centerline{$\Aut_S\,\na^2_K=\{\phi\in \Aut_0\,\na^2_K\mid {\textnormal d}
\phi\vert_{\bf 0}\in S\}$, }

\noindent 
where $\jac(\phi):=\det\,\d\phi$ is the jacobian of $\phi$ and $S$ is a subgroup of $\GL(2,K)$.
Since $\Aut\,\na^2_K/\Aut_0\,\na^2_K$ is naturally isomorphic to $\na^2_K$, {\it informally speaking}
$\Aut_0\,\na^2_K$ is a subgroup of codimension two. Similarly, since $\jac(\phi)$ is  a constant polynomial, $\SAut_0\,\na^2_K$ has codimension three.
Let $U(K)$ be the group of linear transformations $(x,y)\mapsto (x,y+ax)$ for some $a\in K$.
Similarly, 
the group $\Aut_{U(K)}\,\na^2_K$ has codimension five,
and the  group 
$\Aut_1\,\na^2_K:=\Aut_{\{1\}}\,\na^2_K$ has codimension six.
Anyhow, they are viewed as large subgroups.

It was  known that the  Cremona group $Cr_2(\C)$  and $\Aut\,\na^2_\C$
are not linear over a field,
see \cite{C}\cite{Co}. For the large subgroups of
$\Aut\,\na^2_K$, the linearity results are  
 much more contrasted, as it is shown by the following

\begin{MainA} 
(A.1)  Whenever $K$ is infinite, the group 
$\SAut_0\,\na^2_K$ is not linear, even over a ring.

(A.2) However, there is an embedding
$\Aut_{U(K)}\,\na^2_K\subset \SL(2,K(t))$.
\end{MainA}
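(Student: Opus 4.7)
The two parts are of very different nature; I address them in turn.

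\emph{Part (A.1).} The plan is to exhibit inside $\SAut_0\,\na^2_K$ a subgroup whose structure is incompatible with any embedding into $\GL_n(R)$ for a commutative ring $R$. By Jung--van der Kulk's amalgamated product decomposition (applied via Bass--Serre theory), $\SAut_0\,\na^2_K$ contains the infinite-rank abelian subgroup
\[
V:=\{(x,y)\mapsto(x+p(y),y):p\in yK[y]\}\cong(K,+)^{\oplus\N}
\]
sitting inside the triangular factor, and for a well-chosen involution $\sigma\in\SL_2(K)$ the conjugate $V':=\sigma V\sigma^{-1}$ sits inside a different vertex stabilizer of the Bass--Serre tree. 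The Bass--Serre normal form then identifies $\langle V,V'\rangle$ with the abstract free product $V\ast V'$. For $K$ infinite, this free product of two infinite-rank $K$-vector spaces is too combinatorially rich to embed in any $\GL_n(R)$, by an appropriate nonlinearity-over-rings criterion. The main obstacle is the last step: linearity over a ring is permissive ($V$ alone embeds in $\GL_2(K[y])$, for instance), so ordinary obstructions such as residual finiteness or the Tits alternative do not suffice. The natural tool is a Drutu--Sapir-type rigidity of asymptotic cones of linear groups over commutative rings.

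\emph{Part (A.2).} The plan is to bootstrap from the linear representation of $\Aut_1\,\na^2_K$ constructed in section~5. The group $\Aut_{U(K)}\,\na^2_K$ sits in a split exact sequence
\[
1\to \Aut_1\,\na^2_K \to \Aut_{U(K)}\,\na^2_K \to U(K)\to 1,
\]
where $U(K)\cong(K,+)$ is realized as the linear automorphisms $\alpha_a:(x,y)\mapsto(x,y+ax)$ and acts on $\Aut_1\,\na^2_K$ by conjugation. Given the representation $\rho:\Aut_1\,\na^2_K\hookrightarrow \SL_2(K(t))$ from section~5 and the natural embedding $\iota:U(K)\hookrightarrow \SL_2(K(t))$, $\iota(\alpha_a)=\begin{pmatrix}1&0\\a&1\end{pmatrix}$, define
\[
\psi(\alpha_a\cdot\phi_1):=\iota(\alpha_a)\cdot\rho(\phi_1),\qquad\phi_1\in\Aut_1\,\na^2_K,\ a\in K.
\]
Injectivity will follow from injectivity of $\rho$ together with the uniqueness of the decomposition $\alpha_a\phi_1$. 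The map $\psi$ is a homomorphism provided the conjugation action of $\iota(U(K))$ on $\rho(\Aut_1)$ in $\SL_2(K(t))$ matches the conjugation action of $U(K)$ on $\Aut_1\,\na^2_K$ inside $\Aut_{U(K)}\,\na^2_K$.

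The main obstacle is precisely this compatibility, which depends on the explicit construction of $\rho$ in section~5. One expects $\rho$ to be built from a "tangential Jacobian" or jet-type construction along the $y$-axis, rendering it $\GL_2(K)$-equivariant with respect to the natural action, in which case compatibility with $\iota$ is automatic. A subsidiary point is to verify that $\rho(\Aut_1\,\na^2_K)$ lies inside a subgroup of $\SL_2(K(t))$ normalized by $\iota(U(K))$ (such as the congruence kernel modulo $t$), ensuring that the semidirect products on both sides match and that $\psi$ is well defined as a homomorphism into $\SL_2(K(t))$ rather than into a larger semidirect product.
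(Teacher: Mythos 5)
The plan is to find a free product $V\ast V'$ of two infinite-rank abelian groups inside $\SAut_0\,\na^2_K$ and argue that such a free product cannot embed in any $\GL(n,R)$. This fails for two reasons. First, with $p\in yK[y]$ your $V$ contains the unipotent linear maps $(x,y)\mapsto(x+ay,y)$, and $V'=\sigma V\sigma^{-1}$ contains their transposes, so $\langle V,V'\rangle$ contains all of $\SL(2,K)$ and is certainly not the free product $V\ast V'$; to get a genuine free product you must restrict to $p\in y^2K[y]$, i.e.\ to the groups $E_{\delta}(K)$. Second, and fatally, that corrected free product \emph{is} linear: the paper's own Theorem A.2 (via Nagao's theorem) shows that the free product of $\Card\,\P^1_K$ copies of $K^{\oplus\aleph_0}$, namely $\Aut_1\,\na^2_K\simeq\SL(2,tK[t])$, embeds in $\SL(2,K(t))$. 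So no amount of "combinatorial richness" of a free product of abelian groups can be the obstruction, and no Drutu--Sapir-type argument will detect one. The actual source of nonlinearity in the paper is entirely different: it is the \emph{semidirect} structure $B(K)\ltimes E(K)\subset\SAut_0\,\na^2_K$, where the eigenvalue group $\Lambda=K^*$ acts on the graded pieces $K.T_n$ of $E(K)$ by the characters $\lambda\mapsto\lambda^n$. The paper shows (Lemma 17, via the semi-algebraic character machinery of Section 7) that a faithful finite-dimensional representation of $B(K)\ltimes E(K)$ over a field forces $\Lambda$ to be a "good" subgroup of $K^*$ (every f.g.\ subgroup has rank equal to transcendence degree), or forces $d(\Lambda)$ to be bounded; $K^*$ violates this for every infinite $K$. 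The passage from "not linear over a field" to "not linear over a ring" is then handled by the trivial-normal-centralizers property of the mixed product $\SL(2,K)\ltimes\ast_{\delta}E_\delta(K)$ (Corollary 3), not by any asymptotic-cone rigidity.

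\textbf{Part (A.2): the equivariance you hope for is impossible, but a weaker one suffices.} You propose to extend $\rho:\Aut_1\,\na^2_K\hookrightarrow\SL(2,K(t))$ to $\Aut_{U(K)}\,\na^2_K$ by sending $\alpha_a$ to a lower unipotent matrix, and you expect compatibility because $\rho$ "should be" $\GL(2,K)$-equivariant. It cannot be: a $\SL(2,K)$-equivariant $\rho$ would extend (by the universal property of the mixed product, Lemma 3) to a faithful finite-dimensional representation of $\SL(2,K)\ltimes\ast_\delta E_\delta(K)\simeq\SAut_0\,\na^2_K$, contradicting Part (A.1). Moreover the paper's $\rho$ is not a jet-type construction at all; it is a purely abstract isomorphism of free products (Lemma 14). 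What saves the statement is that only $U(K)$-equivariance is needed, and $U(K)$ acts on $\P^1_K$ \emph{almost freely transitively} (one fixed point $\delta_0$, on whose factor $U(K)$ acts trivially, plus one free orbit). Hence both $\Aut_{U(K)}\,\na^2_K$ and $\SL_{U(K)}(2,K[t])$ collapse to free products of the form $(U(K)\times K^{\oplus\aleph_0})\ast K^{\oplus\aleph_0}$ (Lemmas 10 and 13), and any abstract isomorphism of the factors does the job -- no geometric compatibility condition remains to be checked. So your Part (A.2) reaches the right destination, but the step you flag as the "main obstacle" is resolved by the orbit structure of $U(K)$ on $\P^1_K$, not by equivariance of $\rho$ under the full linear group.
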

 
 \noindent 
For a finite field $K$, the index
$[\Aut\,\na^2_K:\Aut_{U(K)} \,\na^2_K]$ is finite. Therefore Theorem A.1 admits the following converse

\begin{Cor*} If $K$ is finite, the group 
$\Aut\,\na^2_K$ is linear over the field $K(t)$.
\end{Cor*}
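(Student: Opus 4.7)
The plan is to combine Theorem A.2 with the standard fact that a group containing a linear subgroup of finite index is itself linear over the same field. So two things need to be checked: first, that $[\Aut\,\na^2_K : \Aut_{U(K)}\,\na^2_K]$ is indeed finite when $K$ is finite; second, that linearity passes to finite-index overgroups via an induced representation.

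For the finite-index claim, I would filter through the tower $\Aut_{U(K)}\,\na^2_K \subset \Aut_0\,\na^2_K \subset \Aut\,\na^2_K$. The outer quotient is already identified in the Introduction with $\na^2_K \cong K^2$ (via the evaluation map $\phi \mapsto \phi(\mathbf{0})$), so its size is $|K|^2$. For the inner one, I would use that the differential at the origin gives a surjective group homomorphism $\d|_{\mathbf{0}} : \Aut_0\,\na^2_K \twoheadrightarrow \GL(2,K)$ (surjective because linear transformations already lie in $\Aut_0\,\na^2_K$) under which $\Aut_{U(K)}\,\na^2_K$ is by definition the preimage of $U(K)$. This index therefore equals $[\GL(2,K) : U(K)]$, which is finite since $\GL(2,K)$ is finite. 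Both indices being finite, so is the overall index $n := [\Aut\,\na^2_K : \Aut_{U(K)}\,\na^2_K]$.

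By Theorem A.2 there is an embedding $\Aut_{U(K)}\,\na^2_K \hookrightarrow \SL(2, K(t))$. Then the classical induced representation takes over: from any faithful representation $\rho : H \to \GL(m,L)$ of a subgroup $H \leq G$ of finite index $n$, one builds an embedding $G \hookrightarrow \GL(mn, L)$ on the $L$-vector space of functions $f : G \to L^m$ satisfying $f(hg) = \rho(h)f(g)$ for all $h \in H$, $g \in G$, with $G$ acting $L$-linearly by right translation. Faithfulness is elementary: the values of $f$ on a system of coset representatives may be chosen independently in $L^m$, so a $g$ fixing every such $f$ must satisfy $g \in H$ and $\rho(g) = 1$. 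Applied here with $L = K(t)$, this yields the desired embedding $\Aut\,\na^2_K \hookrightarrow \GL(2n, K(t))$.

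There is no genuine obstacle beyond Theorem A.2 itself; the finite-index bookkeeping and the induced-representation passage are both routine. The only minor point worth double-checking is that the surjectivity of the differential map, and the identification of the translation quotient with $\na^2_K$, both hold over an arbitrary (in particular finite) field, which is standard.
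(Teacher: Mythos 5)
Your proposal is correct and follows essentially the same route as the paper, which deduces the corollary from Theorem A.2 together with the finiteness of the index $[\Aut\,\na^2_K:\Aut_{U(K)}\,\na^2_K]$ (equivalently, of $[\Aut\,\na^2_K:\Aut_1\,\na^2_K]$ in Section 5.5) and the standard induction of a faithful representation from a finite-index subgroup. You merely make explicit the two routine verifications that the paper leaves implicit.
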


The existence of
large linear subgroups 
in $\Aut\,\na^n_K$ is specific to the dimension
$2$. The case $n=3$ is enough to show this.
For a finite-codimensional ideal ${\bf m}$ of
$K[x,y,z]$, let $\Aut_{\bf m}\,\na^3_K$ be the group of
all automorphisms

\centerline{ $(x,y,z)\mapsto 
(x+f,y+g,z+h)$,}

\noindent where $f,\,g$ and $h$ belong to ${\bf m}$. Equivalently, $\phi$ fixes some
infinitesimal neighborhood of a finite subset in $\na^3_K$.  E.g. for
${\bf n}=(x,y,z)^2$, we have

\centerline
{$\Aut_{{\bf n}}\,\na^3_K=\{\phi\in \Aut\,\na^3_K \mid \phi({\bf 0})={\bf 0}\,$ and $\,
{\textnormal d}
\phi\vert_{\bf 0}=\id\}$.}

\noindent However the groups $\Aut_{\bf m}\,\na^3_K$ are not linear, even if $K$ is finite, as shown by

\begin{MainB} The group $\Aut_{\bf m}\,\na^3_K$ is not
linear, even over a ring.
\end{MainB}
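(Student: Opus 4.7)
The natural strategy is to exhibit inside $\Aut_{\bf m}\,\na^3_K$ a finitely generated subgroup that cannot be embedded in any $\GL(n,R)$ over a commutative ring $R$; since subgroups of linear groups are linear, this rules out linearity of the ambient group. The cleanest available obstruction is Malcev's theorem: any finitely generated subgroup of $\GL(n,R)$ over a commutative ring is residually finite, so producing a finitely generated, non-residually-finite subgroup suffices.

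First I would reduce to the case ${\bf m} = (x,y,z)^N$ for $N\ge 2$. A finite-codimensional ideal has finite vanishing locus; after an affine change of coordinates placing one zero at the origin, ${\bf m}$ contains some $(x,y,z)^N$, and $\Aut_{(x,y,z)^N}\,\na^3_K$ embeds into $\Aut_{\bf m}\,\na^3_K$. Inside $\Aut_{(x,y,z)^N}\,\na^3_K$ I would then construct two triangular-type automorphisms $\alpha(x,y,z)=(x,y,z+p(x,y))$ with $p\in(x,y)^N$ and a higher-order companion $\beta$ whose generated subgroup contains a Baumslag--Solitar group $\mathrm{BS}(a,b)$ with $|a|\neq|b|$. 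Three dimensions are essential: the two-dimensional analog $\Aut_\mathbf{n}\,\na^2_K$ is in fact linear, as it embeds into $\Aut_{U(K)}\,\na^2_K\subset\SL(2,K(t))$ by Theorem~A, so the construction must genuinely exploit the third coordinate as a workspace in which to encode the conjugation scaling $\beta\alpha^a\beta^{-1}=\alpha^b$.

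The principal obstacle is ensuring that $\langle\alpha,\beta\rangle$ realises $\mathrm{BS}(a,b)$ faithfully rather than only as a quotient. In the pro-unipotent setting the adjoint action of any identity-differential automorphism preserves the lowest-order part of a vector field, so any scaling $\alpha^a\mapsto\alpha^b$ must arise from a carefully engineered resonance between several monomials appearing in $\alpha$, tuned so that the first-order effect of conjugation by $\beta$ is a clean $b/a$ rescaling on the collective span. Faithfulness can be verified by tracking the induced action on successive jet quotients $K[x,y,z]/(x,y,z)^{N+k}$, each of which is a finite-dimensional linear-algebra problem, and ruling out any extra relators that would collapse the Baumslag--Solitar structure. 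Once this embedding is established, non-Hopficity of $\mathrm{BS}(a,b)$ for $|a|\neq|b|$ supplies a nontrivial element killed in every finite quotient, and Malcev's theorem delivers the contradiction that proves non-linearity of $\Aut_{\bf m}\,\na^3_K$ over any commutative ring.
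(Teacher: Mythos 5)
Your strategy---find a f.g.\ non-residually-finite subgroup (a Baumslag--Solitar group $\mathrm{BS}(a,b)$) and invoke Malcev---cannot prove this theorem, for a reason that is visible before any construction is attempted. The statement is asserted for \emph{every} field $K$, including finite ones, and when $K$ is finite the group $\Aut_{\bf m}\,\na^3_K$ is itself residually finite: each $\phi\in\Aut_{\bf m}\,\na^3_K$ satisfies $\phi^*(f)\equiv f \mod {\bf m}$, hence preserves every power ${\bf m}^k$, so the group acts on the finite rings $K[x,y,z]/{\bf m}^k$; since $\bigcap_k {\bf m}^k=0$ by Krull's intersection theorem, this action separates elements. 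Thus no f.g.\ subgroup fails to be residually finite, and the Malcev obstruction is vacuous exactly in a case the theorem must cover. Even in characteristic zero the plan founders on the obstacle you yourself flag: for ${\bf m}\subset (x,y,z)^2$ the group is pro-unipotent on jets, every $\alpha\neq 1$ has a well-defined $\log\alpha$ whose lowest-order component is preserved by $\Ad(\beta)$ (conjugation by an automorphism with identity differential acts as the identity on the associated graded Lie algebra), and comparing leading terms in $\beta\alpha^a\beta^{-1}=\alpha^b$ forces $a=b$. This is an eigenvalue obstruction; no ``resonance between monomials'' can evade it, so $\mathrm{BS}(a,b)$ with $a\neq b$ simply does not embed. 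A further, repairable, slip: a finite-codimensional ideal with several zeros need not contain any power of a single maximal ideal, so the reduction to ${\bf m}=(x,y,z)^N$ fails as stated.

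The paper's obstruction is of a different nature and survives both objections. Setting $I={\bf m}\cap K[z]$, a nonzero proper ideal of $K[z]$, the Nagata-type embedding $\Phi$ places the amalgamated product $\Aff(2,I)*_{B_{\Aff}(I)}\Elem(I)$ inside $\Aut_{\bf m}\,\na^3_K$ (indeed inside the tame subgroup). The factor $\Elem(I)\simeq I\ltimes I[t]$ is locally nilpotent but contains nilpotent subgroups of unbounded nilpotency class (Lemma \ref{index}; in characteristic $p$ this requires the identity of Lemma \ref{formula}), whereas nilpotent subgroups of $\GL(n,L)$ over a field have class bounded in terms of $n$. This rules out linearity over a field, and the trivial-core amalgamated-product criterion (Corollary \ref{criterion1}, via Lemma \ref{HI}) upgrades the conclusion to non-linearity over any commutative ring. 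Note that this obstruction is compatible with residual finiteness, which is why it works uniformly in all characteristics and for finite $K$.
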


We will now turn our attention to the
small subgroups, namely the finitely 
generated (f.g. in the sequel)
subgroups of $\Aut\,\na^2_K$. 
Let $\Gamma\subset \Aut\,\na^2_\Q$ be

\centerline{ $\Gamma=\langle S,T\rangle$, where
$S(x,y)=(y,2x)$ and $T(x,y)=(x,y+x^2)$.}

\noindent In \cite{Co},  Y. Cornulier asked 
about the existence
of nonlinear f.g. subgroups in
$\Aut\,\na^2_\C$. An answer is provided by 
the Assertion C.1 of the next

\begin{MainC} Let $K$ be a field of characteristic zero.

(C.1) The subgroup 
$\Gamma\subset\Aut_0\,\na^2_K$ is not linear, even over a ring.

(C.2) Any f.g. subgroup of
$\SAut_0\,\na^2_K$ is linear over $K(t)$.
\end{MainC}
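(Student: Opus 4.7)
\textbf{(C.1)} By Mal'cev's theorem, every f.g.\ subgroup of $\GL(n,R)$ for a commutative ring $R$ is residually finite, so it suffices to show $\Gamma$ is not residually finite. The plan starts from explicit identities inside $\Aut_0\,\na^2_\Q$: writing $T_a(x,y)=(x,y+ax^2)$ and $\tilde T_a(x,y)=(x+ay^2,y)$, one has $S^2=2\cdot\id$, hence $S^{-2}TS^2=T^2$, together with $STS^{-1}=\tilde T_{1/4}$ and $S\tilde T_aS^{-1}=T_{2a}$. These relations exhibit a one-relator presentation of $\Gamma$ on the generators $S,T$; I would then identify $\Gamma$ with, or locate inside it, a known non-Hopfian one-relator group such as a Baumslag--Solitar $\langle a,b : ab^ma^{-1}=b^n\rangle$ for $|m|,|n|\ge 2$ and $m\neq\pm n$. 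The main obstacle is rigorously establishing the one-relator presentation with no extraneous relations; the natural tool is the action of $\Gamma$ on the Bass--Serre tree of the Jung--van der Kulk amalgamated product $\Aut\,\na^2_\Q=\Aff*_E B$, which pins down exactly the relations forced on $\Gamma$.

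\textbf{(C.2)} Let $\Lambda$ be a f.g.\ subgroup of $\SAut_0\,\na^2_K$. The derivative at $\mathbf 0$ fits into a split short exact sequence
\[
1\to\Aut_1\,\na^2_K\to\SAut_0\,\na^2_K\xrightarrow{\d|_0}\SL(2,K)\to 1,
\]
so that $\Lambda$ is an extension of $\bar\Lambda:=\d|_0(\Lambda)\subset\SL(2,K)$ by $N:=\Lambda\cap\Aut_1\,\na^2_K$. By Theorem~A.2, the inclusion $\Aut_1\subset\Aut_{U(K)}\,\na^2_K$ composed with Theorem~A.2's embedding gives a faithful representation $\iota:\Aut_1\hookrightarrow\SL(2,K(t))$, and in particular a faithful representation of $N$. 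To embed $\Lambda$ itself, I would take the direct sum of the quotient representation $\Lambda\xrightarrow{\d|_0}\SL(2,K)\hookrightarrow\GL(2,K(t))$ with a homomorphism $\rho:\Lambda\to\GL(m,K(t))$ satisfying $\rho|_N=\iota|_N$; faithfulness of the direct sum then follows because $\Lambda/N=\bar\Lambda$ is already embedded by the first summand.

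The crux is constructing $\rho$. An extension of $\iota$ to all of $\SAut_0$ is impossible by Theorem~A.1, so one must exploit the f.g.\ hypothesis on $\Lambda$. Concretely: (a)~replace $K$ by the f.g.\ subfield generated by the coefficients of a fixed generating set of $\Lambda$; (b)~compute, for each generator $g$ of $\bar\Lambda$, how conjugation by $g$ twists $\iota$ on $\iota(\Aut_1)\subset\SL(2,K(t))$; (c)~realize each such twist as inner conjugation by a matrix $A_g\in\GL(2,K(t))$, possibly after an algebraic extension; (d)~show that $g\mapsto A_g$ extends to a homomorphism $\bar\Lambda\to\mathrm{PGL}(2,K(t))$ which lifts to $\GL(m,K(t))$ to define $\rho$. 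The \emph{main obstacle} is step~(c)--(d): one must simultaneously solve finitely many compatibility equations (one per generator and per defining relation of $\bar\Lambda$), and the semi-algebraic character theory of Section~7 together with the linearity criteria of Section~9 should guarantee a common solution in a transcendental extension $K(t)$ --- a finiteness which must, and does, fail for the whole of $\SAut_0\,\na^2_K$ in accordance with Theorem~A.1.
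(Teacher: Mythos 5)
Your plan for (C.1) cannot work: $\Gamma=\langle\sigma,\tau\mid\sigma^2\tau\sigma^{-2}=\tau^2\rangle$ \emph{is} residually finite. This is precisely the point of the Drutu--Sapir example that the paper invokes ($\Gamma$ is their first one-relator group that is residually finite yet not linear over a field; concretely it acts on $\F_p^2$ for every odd $p$ and faithfully on the product). The relation involves $\sigma^2$, not $\sigma$, so $\Gamma$ is not a Baumslag--Solitar group in the generators $\sigma,\tau$; the subgroup $\langle\sigma^2,\tau\rangle$ is $BS(1,2)\simeq\Z\ltimes\Z[1/2]$, which is metabelian, linear and residually finite, and no non-Hopfian $BS(m,n)$ sits inside $\Gamma$. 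So the Mal'cev route is closed. The paper instead argues directly: in any representation over a field, $h\tau h^{-1}=\tau^2$ (with $h=\rho(\sigma^2)$) forces $\rho(\tau)$ to be quasi-unipotent of odd quasi-order $m$ in characteristic $0$ with $h(\log\rho(\tau)^m)h^{-1}=2\log\rho(\tau)^m$; a filtration of $V$ built from the spectrum of $h$ then shows $\rho(\Gamma)$ is nilpotent-by-abelian, which contradicts the trivial-normal-centralizers property that $\Gamma$ inherits from its amalgam decomposition $\langle\sigma\rangle*_{\langle\sigma^2\rangle}\langle\sigma^2,\tau\rangle$. The same centralizer property (via Section 3) reduces linearity over a ring to linearity over a field, which is how the ring case is handled. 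Your identification of $\langle S,T\rangle$ with $\Gamma$ via the Bass--Serre/van der Kulk structure is, however, exactly what the paper does.

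For (C.2) the extension-and-twisting scheme founders at your step (c). The isomorphism $\Aut_1\,\na^2_K\simeq\SL(2,tK[t])$ of Lemma \ref{iso} is an abstract isomorphism of free products obtained by matching cardinalities of free factors; it is not $\SL(2,K)$-equivariant, and the conjugation action of $g\in\SL(2,K)$ transported through $\iota$ is not induced by conjugation by any $A_g\in\GL(2,K(t))$. Indeed the $S_\delta$-module $E_\delta(K)$ decomposes with characters $\chi^k$ for all $k\ge 3$, while $U_\delta(tK[t])$ carries only $\chi^2$; no bounded-rank inner twisting can reconcile these, and if it could one would linearize all of $\SAut_0\,\na^2_K$, contradicting Theorem A.1. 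The paper's mechanism is different and essential: a f.g.\ subgroup lies in $\SAut_0^{<n}\,\na^2_K$ for some $n$, and one builds a \emph{new} faithful representation of dimension $1+\lcm(1,\dots,n)$ on $\hat L(N)\otimes K[t]$, sending an elementary automorphism $(x,y)\mapsto(x,y+f(x))$ to $\exp(t\eta f(\eta))$ where $\eta\in\osp(1,2)$ is an odd square root of $e$; faithfulness is proved by a ping-pong argument on highest-degree components. The dimension necessarily grows with $n$, which is exactly the finiteness your f.g.\ hypothesis buys and which your fixed-rank construction does not capture.
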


It turns out that the group $\Gamma$, which is presented by 

\centerline{$\langle\sigma, \tau\mid \sigma^2\tau\sigma^{-2}=\tau^2\rangle$,}

\noindent appears in \cite{DS}
as the first example of a one-related group which is residually finite but not linear
over a field. 

By Theorem C.1, $\Gamma$ is also the first example of a  1-related group which
is not linear (even over a ring) but which
is embeddable in the automorphism group of an
algebraic variety. Residual finiteness 
of $\Gamma$ follows from general principles \cite{BL}, but the observation that $\Gamma$
acts on the finite sets $\F_p^2$, for any
odd $p$, and faithfully on their product, provides
a concrete proof.

For an infinite field $K$, Theorem A.2 
suggests to ask which groups
$G\supset \Aut_1\,\na^2_K$ are linear or not.
Indeed, either this group contains

\centerline{$\SAut\na^2_K:=\{\phi\in \Aut_0\na^2_K\mid \jac(\phi)=1\}$,}

\noindent
which is not linear, or it is ismorphic to
$\Aut_S\na^2_K$,
for some  subgroup $S$ of
$\GL(2,K)$. Therefore we ask

\centerline{\it For a given subgroup 
$S\subset \GL(2,K)$, is
the group $\Aut_S\na^2_K$ linear?}

\noindent For the subgroups $S$ of $\SL(2,K)$, 
three criteria provide an almost complete answer,
see Sections 8 and 9. Some examples of application  are

\begin{ExA} Let $q$ be a quadratic form on $K^2$ and $S=\SO(q)$. 

If $q$ is anisotropic, $\Aut_S\na^2_K$
is linear over a field extension of $K$.

Otherwise $\Aut_S\na^2_K$ is not linear, even over a ring.
\end{ExA}

\begin{ExB} For some cocompact lattices 
$S\subset \SL(2,\R)$,  
$\Aut_S\,\na^2_\C$ is linear over $\C$.

For any  lattice $S\subset \SL(2,\C)$, $\Aut_S\,\na^2_\C$ is not linear, even over a ring.
\end{ExB}

\begin{ExC} Let $d$ be a squarefree integer,
let ${\cal O}$ be the ring of integers of
$k:=\Q(\sqrt{d})$. 
Set $K=k(x)((t))$ and
$S=\SL(2,{\cal O}[x,x^{-1},
t])$.

If $d>0$, the group $\Aut_S\,\na^2_K$
is not linear, even over a ring.

Otherwise, $\Aut_S\,\na^2_K$
is linear over some field of characteristic zero. 
\end{ExC}

\bigskip\noindent
{\it 0.2 About the main points of the paper}

\noindent Since  {\it the topics are not ordered } as in the general introduction, a summary has been provided. Also note that {\it the statements in
the introduction  are often weaker} than those in the main text.

For a group $S$, we define
in Section 2 the notion of 
{\it a mixed product} of $S$ as a semi-direct product $S\ltimes *_{p\in P} E_p$, where 
$(E_p)_{p\in P}$ is a family of groups and
$S$ acts by permuting the factors of the free product. Hence $P$ is a $S$-set such that $E_p^s=E_{s.p}$ for any $s\in S$ and $p\in P$.

In section 3, it is shown that, under a mild  assumption, a mixed product (or an amalgamated product) which is linear over a ring is
automatically linear over a field. 
This  explains
the dichotomy {\it linear over a field/not linear, even over a ring} in our statements.
Then, we can use  the theory of algebraic groups 
to show that some mixed products are not linear, even over a ring. 

Let $S$ be a subgroup of $\SL(2,K)$.
The main question of the paper is to decide
if $\Aut_S\,\na^2_K$ is linear or not.
Indeed, this group
is a mixed product $S\ltimes *_{\delta\in\P^1_K}\,E_\delta(K)$.
Hence, there are two obstructions for the
linearity. 

First, the groups
$S_\delta\ltimes E_\delta(K)$ have to be linear 
with a uniform bound on the degree.
This problem is solved by using 
the notion of {\it semi-algebraic characters} for subgroups $\Lambda\subset K^*$. It
was inspired by the famous paper of  
Borel and Tits \cite{BT}, proving that the abstract  isomorphisms of simple
algebraic groups are semi-algebraic.
Strictly speaking, our paper only provides
a partial answer, because otherwise it would  had been too long. 

The second obstruction is the possibility, or not, to glue together some representations  
of the groups $S$ and
$S_\delta\ltimes E_\delta(K)$ to get
a representation of $\Aut_S\,\na^2_K$. 
Our linearity criterion is stronger in
characteristic zero than in finite characteristics. In characteristic zero, we can
use for the gluing process the following stronger version of 

\begin{MainC2} Assume $K$ of characteristic $0$.
There is an embbeding

\centerline{
$\SAut_0^{<n}\,\na^2_K\subset 
\SL(1+\lcm(1,2\dots,n),K(t))$.}
\end{MainC2}

\noindent Here $\SAut_0^{<n}\,\na^2_K$ denotes
the subgroup of $\SAut_0\,\na^2_K$ generated
by the automorphisms of degree $<n$,
for some $n\geq 3$.

The proof of the strong version of Theorem C.2 uses 
one trick, based on the Lie superalgebra
$\osp(1,2)$ and one idea, the ping-pong lemma. This idea   was originally invented 
by Fricke and Klein for the dynamic of groups with respect to the metric topologies, see  \cite{FN}. Later,  Tits used it in the context of the ultrametric topologies \cite{T72},
and we follow the Tits idea. 
Here, the ping-pong setting requires
a  representation of very large dimension.

As a brief conclusion  

$\bullet$ if $\ch K=0$, then 
$\Aut_0\,\na^2_K$ is not even locally 
linear,
$\SAut_0\,\na^2_K$ is locally 

linear but not linear, and $\Aut_1\,\na^2_K$ is linear. Therefore, the main questions 

arise for
the groups $G$ with $\Aut_1\,\na^2_K
\subset G\subset \SAut_0\,\na^2_K$.

$\bullet$ if $\ch K=p$,
$\SAut_0\,\na^2_K$ is linear iff $K$ is finite,
while $\Aut_1\,\na^2_K$ is always 

linear. In particular, 
$\Aut\,\na^2_{\overline \F_p}$
is locally linear but not linear. The exis-

tence of nonlinear f.g. subgroups is an open question.

$\bullet$ These phenomena are specific to dimension two.

\section{Main Definitions and Conventions}

Througout the whole paper, $K$ will denote a given field. Its {\it ground field} is $\F=\F_p$ if
$\ch\,K=p$ or $\F=\Q$ otherwise.

\bigskip
\noindent {\it 1.1 Group theoretical notations}

\noindent Let $S$ be a group and let  
$x,\,y\in S$. The symbols $y^x$ and $(x,y)$ are defined by

\centerline{$y^x:=xyx^{-1}$ and 
$(x,y):=xyx^{-1}y^{-1}$.} 

By definition, a {\it $S$-set} is a set $P$ endowed with an action of $S$. The {\it stabilizer} 
of a point $p\in P$ is the subgroup 
$S_p:=\{s\in S\mid s.p=p\}$.
The {\it core} $\Core_S(A)$ of a subgroup
$A$ of $S$  is the kernel of the action of $S$ on $S/A$.

Similarly, a
{\it $S$-group} is a group
$E$ endowed with a homomorphism $S\to \Aut(E)$. The corresponding semi-direct product of $S$ by $E$ is denoted by $S\ltimes E$.
Given another $S$-groups
$E'$, a homomophism, respectively an isomorphism,  
$\phi:E\to E'$ is called an {\it $S$-homomorphism},
resp. an {\it $S$-isomorphism} if it commutes with the $S$-action.

\bigskip
\noindent {\it 1.2 Commutative rings and group functors}

\noindent  Throughout the whole paper, a {\it commutative ring} means an associative commutative unital  ring.

A {\it group functor}  
is a functor $G:R \mapsto G(R)$ from the category of commutative   rings $R$ to the category of groups
( see  e.g. \cite{D} and  \cite{T89}
for the functorial approach to group theory).
The standard example of a group functor is
$R\mapsto \GL(n,R)$, where $n$ is a given positive integer.

Given an ideal $I$ of a commutative ring $R$, we denote by $G(I)$ the kernel of the homomorphism $G(R)\to G(R/I)$. It is called the {\it congruence subgroup} associated to the ideal $I$. For example, 
$\GL(n,I)$ is the  subgroup of $\GL(n,R)$ of all matrices of the form $\id+A$, where all entries of $A$ are in $I$.

In most cases, {\it we will only define the group $G(K)$ when $K$ is  a field} and the reader should understand that the definition over a ring is similar.

For the group functor  $K\mapsto\Aut\, \na_K^2$  and its consorts, our notation is not consistent, since 
$K$ is an index.

\bigskip
\noindent {\it 1.3 The group functors 
$\Elem_*(K)$,  $\Aff(2,K)$, and their subgroups}

\noindent 
By definition, an {\it elementary automorphism}  of 
 $\na_K^2$ is an automorphism 
 
 \centerline{$\phi:(x,y)\mapsto (z_1 x +t, z_2 y +f(x))$}
 
 \noindent for some $z_1, z_2\in K^*$, some $t\in K$ and some $f\in K[x]$. The group of elementary automorphisms of $\na_K^2$ is denoted $\Elem (K)$. Set

\hskip3cm $\Elem_0(K)=\Elem (K)\cap \Aut_0\,\na^2_K$, 
 
\hskip3cm $\SElem_0(K)=\Elem (K)\cap \SAut_0\,\na^2_K$, and
 
\hskip3cm $\Elem_1(K)=\Elem (K)\cap \Aut_1\,\na^2_K$.
 
\noindent However, {\it  we will   use the simplified notation $E(K)$ for $\Elem_1(K)$}.

Let $\Aff(2,K)$ 
be the subgroup
of affine automorphisms of $\na_K^2$.  Set 

\hskip2cm $B_{\Aff}(K):=\Aff(2,K)\cap \Elem(K)$,  

\hskip2cm $B_{\GL}(K):
=B_{\Aff}(K)\cap \GL(2,K)$, and 
 
\hskip2cm $B(K):
=B_{\Aff}(K)\cap \SL(2,K)$.

Indeed $B_{\Aff}(K)$, $B_{\GL}(K)$ and $B(K)$ are the standard  Borel subgroups of 
$\Aff(2,K)$, $\GL(2,K)$ and  $\SL(2,K)$. We have

\centerline{$\Aff(2,K)/B_{\Aff}(K)
=\GL(2,K)/B_{\GL}(K)=\SL(2,K)/B(K)
\simeq \P^1_K$, and}

\centerline{$\Elem_0(K)=B_{GL}(K)\ltimes E(K)$ and
$\SElem_0(K)=B(K)\ltimes E(K)$.}

\bigskip
\noindent {\it 1.4 An informal definition of finite-codimensional group subfunctors}

\noindent Informally speaking, a group subfunctor $H\subset G$ has {\it finite codimension} if the functor $R\mapsto G(R)/H(R)$ is "represented" 
by a  scheme $X$ of finite type. In particular, it means that there is a natural transform
$G/R\to X$ wich induces a bijection 
$G(K)/H(K)\to X(K)$ whenever $K$ is an algebraically closed field. Since this notion is used only for 
presentation purpose,  we will not  provide 
a formal definition. 

 For example, the subgroup $\Aut_0\,\na_K^2$   has codimension $2$ in $\Aut\,\na_K^2$, since the quotient $\Aut\,\na_K^2/\Aut\,\na_K^2$ is naturally $\na^2_K$. This fact does not require to 
involve the elusive theory of ind-algebraic groups.

 \section{Mixed Products}

Let $S$ be a group and let
$(E_p)_{p\in P}$ be a collection of groups
indexed by some $S$-set $P$.
 Since we did not found a name in the literature,
we will call {\it mixed product of $S$}
any semi-direct product $S\ltimes *_P E_p$
where $S$ acts on the free product $*_P E_p$ by permuting its factors, i.e. we have
$E_p^s=E_{s.p}$ for any $s\in S$ and $p\in P$ (see \cite{MKS} ch.4
for the definition of free products).

The connections between mixed products, amalgamated products and free products  are investigated.
As a consequence of  van der Kulk's Theorem,
we show that the groups  $\Aut_S\,\na^2_K$
are mixed products and that the groups
 $\Aut_1\,\na^2_K$ and $\Aut_{U(K)}\,\na^2_K$  are free products.

\bigskip
\noindent {\it 2.1  Amalgamated products}
 
\noindent  Let $A$, $G_1$ and $G_2$ be groups
and let $f_1:A\to G_1$ and $f_2: A\to G_2$ be
 group homomorphisms. Let $G_1 *_A G_2$ be 
the {\it amalgamated product} of $G_1$ and 
$G_2$ over $A$,
see e.g. \cite{S83}, ch. I. 
Since this product satisfies a universal property,  it is often called a
{\it free} amalgamated product, see \cite {MKS} ch.8.

In what follows, we will {\it always assume that 
$f_1$ and $f_2$ are injective}. Hence the homomorphisms 
 $G_1\rightarrow G_1 *_A G_2$ and 
 $G_2\rightarrow G_1 *_A G_2$ are
 injective, by  the Theorem 1 of  ch. 1 in \cite{S83}. Therefore, we will use a less formal terminology. The groups $G_1$ and $G_2$ are viewed as subgroups of  $G_1 *_A G_2$ and we will say that
 $G_1$ and $G_2$ {\it share $A$ as a common subgroup}.

  \bigskip \noindent 
{\it 2.2 Reduced words}

\noindent
The usual definition \cite{S83} of reduced words
is based on the right $A$-cosets. In order to avoid a confusion
between the set difference notation $A\setminus X$ 
and the $A$-orbits notation $A\backslash X$, we will
use a definition based on the left $A$-cosets.

Let $G_1$, $G_2$ be two groups sharing a common subgroup $A$,
and let $\Gamma=G_1 *_A G_2$.
Set $G_1^*=G_1\setminus A$,  $G_2^*=G_2\setminus A$
and let $T_1^*\subset G_1^* $ (respectively 
$T_2^*\subset G_2^* $) be a set of  representatives of  
$G_1^*/A$ (resp. of $G_2^*/A$).

Let $\Sigma$ be the set of all finite alternating sequences
${\boldsymbol\epsilon}=(\epsilon_1,\dots,\epsilon_n)$  of ones and twos. A {\it reduced word} of {\it type ${\boldsymbol\epsilon}$}  is a word $(x_1,\dots x_n,x_0)$ where $x_0$ is in $A$, and
$x_i\in T_{\epsilon_i}^*$ for any $1\leq i\leq n$.
Let ${\cal R}$ be the set of all reduced words.
The next Lemma is well-known, see  
 e.g. \cite{S83}, Theorem 1.

\begin{lemma}\label{words} The map

\centerline{ $(x_1,\dots x_n,x_0)\in {\cal R} 
 \mapsto  x_1\dots x_n x_0 \in G_1*_A G_2$}
 
 \noindent is bijective.
 \end{lemma}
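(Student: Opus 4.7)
The plan is to show that the map $\phi\colon(x_1,\dots,x_n,x_0)\mapsto x_1\cdots x_n x_0$ from ${\cal R}$ to $\Gamma=G_1*_A G_2$ is both surjective and injective.

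Surjectivity will come from an inductive normal-form algorithm. Any element of $\Gamma$ is represented by some alternating product $g_1\cdots g_n$ with $g_i\in G_{\epsilon_i}$. I process it from right to left: if $g_n\in A$ it becomes the $x_0$-slot; otherwise write uniquely $g_n=t_n a_n$ with $t_n\in T^*_{\epsilon_n}$ and $a_n\in A$, then absorb $a_n$ into $g_{n-1}$ (valid since $A\subset G_{\epsilon_{n-1}}$) and iterate. Either the absorbed product $g_{n-1}a_n$ lands in $A$---so two adjacent syllables collapse into a same-type letter, processed at the next step---or it does not and the letter converts to an element of $T^*_{\epsilon_{n-1}}$ while the algorithm moves one step leftward. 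Termination yields a reduced word with the same image in $\Gamma$.

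Injectivity is handled by van der Waerden's trick: I construct a $\Gamma$-action on ${\cal R}$ such that $\phi(w)\cdot(1)=w$ for every $w\in{\cal R}$, where $(1)$ denotes the trivial word of length zero with $x_0=1\in A$. First I define a left action of $G_1$ on ${\cal R}$: given $g\in G_1$ and $w=(x_1,\dots,x_n,x_0)$ of type $(\epsilon_1,\dots,\epsilon_n)$, if $n\geq 1$ and $\epsilon_1=1$ I compute $gx_1\in G_1$ and either set $g\cdot w=(x_2,\dots,x_n,(gx_1)x_0)$ when $gx_1\in A$, or write $gx_1=t'a'$ with $t'\in T_1^*$, $a'\in A$ and set $g\cdot w=(t',x_2,\dots,x_n,a'x_0)$ otherwise. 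If $n=0$ or $\epsilon_1=2$, I instead decompose $g=ta$ with $t\in T_1^*\cup\{1\}$ (omitting $t$ when $g\in A$), then absorb $a$ into $x_1$ (or into $x_0$ if $n=0$) by applying the previous rule to $a$. The action of $G_2$ is defined symmetrically. Both actions restrict to the same $A$-action on ${\cal R}$, so the universal property of the amalgamated product combines them into a $\Gamma$-action. A direct computation shows $\phi(w)\cdot(1)=w$, whence $\phi$ is injective.

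The main obstacle is verifying that the formulas above really define group actions, i.e.\ $(gg')\cdot w=g\cdot(g'\cdot w)$ across every configuration of the leading type of $w$ together with the $A$-membership status of $g$, $g'$, and their relevant intermediate products. Once this case analysis is completed, the remaining pieces---surjectivity, agreement of the two actions on $A$, and the recovery identity $\phi(w)\cdot(1)=w$---are all immediate.
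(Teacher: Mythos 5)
Your overall strategy --- a rewriting algorithm for surjectivity plus van der Waerden's trick for injectivity --- is the classical route (it is essentially the proof in Serre's \emph{Arbres, amalgames, $SL_2$}; the paper itself gives no proof and simply cites that source). However, your formulas contain a genuine error that breaks both halves of the argument: you repeatedly move an element of $A$ past a coset representative without modifying that representative. Since the letters $x_i$ represent the \emph{left} cosets $G^*_{\epsilon_i}/A$ while the $A$-slot $x_0$ sits at the \emph{right} end of the word, an $A$-residue created at position $i$ must be pushed rightward through $x_{i+1},\dots,x_n$, rewriting each letter via $a\,x_j=x_j'\,a_j'$ with $x_j'\in T^*_{\epsilon_j}$, before it can be deposited into the $x_0$-slot. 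Your rule $g\cdot w=(t',x_2,\dots,x_n,a'x_0)$ instead teleports $a'$ from the left of $x_2$ to the right of $x_n$, which changes the group element unless $A$ is central; the same illegal commutation occurs in the surjectivity step ``absorb $a_n$ into $g_{n-1}$'', since $g_{n-1}g_n=g_{n-1}t_na_n\neq(g_{n-1}a_n)t_n$ in general.

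This is not a cosmetic slip: with these formulas the maps are not group actions, so the universal property cannot be invoked and injectivity does not follow. Concretely, take $G_1=G_2=S_3$ amalgamated over $A=\langle(12)\rangle$, with $T^*_1=\{(13),(23)\}$ and likewise for the second copy (denoted with primes). For $w=((13),(13)',e)$, $g=(12)$ and $g'=(13)$, your rules give $(gg')\cdot w=((13)',(12))$ but $g\cdot(g'\cdot w)=((23)',(12))$: in the first computation the residue $(gg')x_1=(12)\in A$ is carried past $(13)'$ without converting it into $(23)'$, whereas the correct normal form of $(132)(13)(13)'=(12)(13)'$ is $((23)',(12))$. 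The repair is standard but is precisely the content you deferred as ``the main obstacle'': the action of $g$ on $(x_1,\dots,x_n,x_0)$ must renormalize the entire tail of the word as the $A$-residue propagates rightward, and only for those corrected formulas can one carry out the case analysis verifying the action axioms and the recovery identity $\phi(w)\cdot(1)=w$.
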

 
 Set $\Gamma=G_1*_A G_2$. For $\gamma\in \Gamma\setminus A$
 there is some integer $n\geq 1$, some 
 ${\boldsymbol\epsilon}=(\epsilon_1,\dots,\epsilon_n)\in \Sigma$ and some $g_i\in G_{\epsilon_i}^*$
 such that $\gamma=g_1...g_n$. It follows from
 loc. cit.  that
 $\gamma=x_1...x_nx_0$ for some reduced word 
 $(x_1...x_nx_0)$ of type ${\boldsymbol\epsilon}$. Since it is determined by $\gamma$, the sequence ${\boldsymbol\epsilon}$ is called {\it the type} of $\gamma$.

 \bigskip \noindent 
{\it 2.3 Amalgamated product of subgroups}

\noindent
Let $G_1$, $G_2$ be two groups sharing a common subgroup $A$ and set $\Gamma=G_1*_A G_2$.
Let $G_1'\subset G_1$, $G_2'\subset G_2$
and $A'\subset A$ be subgroups such that 

\centerline{$G_1'\cap A=G'_2\cap A=A'$.}

 \begin{lemma}\label{subamal} (i) The natural map 
 $G_1'*_{A'} G_2'\to \Gamma$ is injective. 
 
 (ii) Let $\Gamma'\subset \Gamma$ be a subgroup
 such that $\Gamma'.A=\Gamma$. Then  we have
 
\centerline{$\Gamma'=G_1'*_{A'} G_2'$,}

 \noindent where $G_1'= G_1\cap \Gamma'$, 
 $G_2'= G_2\cap \Gamma'$ and $A'= A\cap \Gamma'$. 
 \end{lemma}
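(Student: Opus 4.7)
The plan is to derive both parts from the reduced-word normal form of Lemma \ref{words}, applied to the two amalgamated products simultaneously. The key preliminary observation is that the hypothesis $G_i'\cap A=A'$ makes the coset map $G_i'/A'\hookrightarrow G_i/A$ injective, so one can pick a system $T_i'^*$ of representatives of $G_i'/A'$ inside $G_i'\setminus A'$ and then \emph{extend} it to a system $T_i^*$ of representatives of $G_i/A$ inside $G_i\setminus A$.

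For (i), with this compatible choice of representatives, every reduced word $(x_1,\dots,x_n,x_0)$ of $G_1'*_{A'}G_2'$ is, verbatim, a reduced word of $\Gamma=G_1*_AG_2$, since $T_i'^*\subset T_i^*$ and $A'\subset A$. By Lemma \ref{words}, distinct reduced words yield distinct elements of $\Gamma$, so the natural map $G_1'*_{A'}G_2'\to\Gamma$ is injective.

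For (ii), the inclusion $G_1'*_{A'}G_2'\subset\Gamma'$ is immediate from the universal property of the amalgamated product, since $G_1',G_2'\subset\Gamma'$. The content is the reverse inclusion. I would prove by induction on the reduced length $n$ of $\gamma\in\Gamma'$ that $\gamma\in G_1'*_{A'}G_2'$. The base case $n=0$ gives $\gamma=x_0\in A\cap\Gamma'=A'$. For $n\geq 1$, say $\epsilon_1=1$, the hypothesis $\Gamma'.A=\Gamma$ lets me write $x_1=h_1a$ with $h_1\in\Gamma'$ and $a\in A$; since $x_1\in G_1$ this forces $h_1=x_1a^{-1}\in G_1\cap\Gamma'=G_1'$. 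Then $\gamma':=h_1^{-1}\gamma=a\,x_2\cdots x_nx_0$ still lies in $\Gamma'$, and a standard coset-merge of $a$ with $x_2$ (which remains outside $A$ because $x_2\notin A$) produces a reduced form of length $n-1$. Induction gives $\gamma'\in G_1'*_{A'}G_2'$, hence $\gamma=h_1\gamma'\in G_1'*_{A'}G_2'$.

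I do not expect any serious obstacle. The only delicate step is the compatibility of representatives in the setup of (i); once this is in place, (ii) reduces to a short descent on word length that invokes the hypothesis $\Gamma'.A=\Gamma$ exactly once per induction step.
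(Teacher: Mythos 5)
Your proof is correct and follows essentially the same route as the paper: both parts rest on Lemma \ref{words} together with the compatible choice of coset representatives $T^{'*}_i\subset T^{*}_i$. The only difference is cosmetic and concerns (ii), where the paper notes that $\Gamma'.A=\Gamma$ makes the maps $G_i'/A'\to G_i/A$ bijective, so that $\mathcal{R}'$ is exactly the set of reduced words with $x_0\in A'$, and then compares cosets, whereas you run a descent on reduced length; the two arguments are interchangeable.
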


\begin{proof} {\it Proof of Assertion (i).}
For $i=1,2$ set $G_i^*=G_i\setminus A$,
$G_i^{'*}=G'_i\setminus A'$. Let 
$T^*_i\subset G_i^*$ and $T^{'*}_i\subset G_i^{'*}$
be a set of representatives of
$G_i^*/A$ and $G_i^{'*}/A'$.

Since the maps
$G_i'/A'\to G_i/A$ are injective, it can be assumed that  $T^{'*}_i\subset T^{*}_i$. Let $\cal R$ 
and $\cal R'$ be the set of reduced words of
$G_1*_A G_2$, and respectively of $G_1'*_{A'} G_2'$.
By definition, we have $\cal R'\subset \cal R$, thus
by Lemma \ref{words} the map $G_1'*_{A'} G_2'\mapsto G_1*_A G_2$
is injective. 

\smallskip\noindent
{\it Proof of Assertion (ii).}
We will use the notations of the previous proof.
Since $\Gamma'.A=\Gamma$, it follows that 
the  maps $G'_1/A'\to G/A$ and $G_2'/A'\to G_2/A$ are bijective. Therefore ${\cal R'}$ is the set of
all reduced words $(x_1,\dots,x_n,x_0)\in{\cal R}$ such that $x_0\in A'$. It follows easily that

\centerline{
$G_1*_A G_2/G_1'*_{A'} G_2'\simeq A/A'=\Gamma/\Gamma'$,}

\noindent and therefore we have 
$\Gamma'=G_1'*_{A'} G_2'$.
\end{proof}

\bigskip
\noindent
{\it 2.4 The group $\Aut\,\na^2_K$ is an amalgamated product}
 
\noindent Indeed, it is the classical  

\begin{vdKthm}\cite{vdK} We have 

\centerline{$\Aut\,\na_K^2\simeq \Aff(2,K)*_{B_{\Aff}(K)}\Elem(K)$.}
\end{vdKthm}
 
\bigskip 
\noindent {\it 2.5 Mixted products}

\noindent 
Let $S$ be a group, let $P$ be a $S$-set and
let $Q\subset P$ be a set of representatives of $P/S$. A mixed product $S\ltimes *_
{p\in P}\, E_p$ satisfies the following universal property.
 
 \begin{lemma}\label{universal} Let
 $\Gamma\supset S$ be a group. Assume given, for any $q\in Q$, a $S_q$-homomorphism
 $\phi_q:E_q\to\Gamma$.
 Then there is a unique group homomorphism 
 
 \centerline{$\phi:S\ltimes *_{p\in P}\, E_p\to 
\Gamma$}
 
 \noindent such that
 $\phi\vert_{S}=\id$ and $\phi\vert_{E_q}=\phi_q$
 for any $q\in Q$.
  \end{lemma}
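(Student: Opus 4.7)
The plan is to prove uniqueness first, then construct $\phi$ by first extending the family $(\phi_q)_{q \in Q}$ to an $S$-equivariant family $(\phi_p)_{p \in P}$ of homomorphisms, then invoke the universal property of the free product, and finally observe that $S$-equivariance is exactly what is needed to pass from the free product to the semi-direct product.

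For uniqueness, I would note that $S$ together with $\{E_q\}_{q \in Q}$ generates $S \ltimes *_{p \in P} E_p$. Indeed, for any $p \in P$ we can write $p = s \cdot q$ for some $s \in S$ and $q \in Q$, and then $E_p = s E_q s^{-1}$ inside the mixed product, because the $S$-action on $*_{p \in P} E_p$ is by conjugation inside the semi-direct product and satisfies $E_p^s = E_{s \cdot p}$. Any homomorphism that agrees with $\id$ on $S$ and with $\phi_q$ on each $E_q$ is therefore determined on a generating set, hence unique.

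For existence, for each $p \in P$ choose $s_p \in S$ and $q_p \in Q$ with $s_p \cdot q_p = p$ (with $s_p = 1$ whenever $p \in Q$). Define $\phi_p : E_p \to \Gamma$ by $\phi_p(e) = s_p\, \phi_{q_p}(s_p^{-1} e s_p)\, s_p^{-1}$, where the conjugation $s_p^{-1} e s_p$ takes place in $S \ltimes *_p E_p$ and maps $E_p$ onto $E_{q_p}$. The key point is that this does not depend on the choice of $s_p$: if also $t \cdot q_p = p$ then $s_p^{-1} t \in S_{q_p}$, and the $S_{q_p}$-equivariance of $\phi_{q_p}$ ensures that the two possible definitions agree. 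Moreover, by the same computation, the extended family is $S$-equivariant in the sense that $\phi_{s \cdot p}(s e s^{-1}) = s\, \phi_p(e)\, s^{-1}$ for every $s \in S$ and $e \in E_p$.

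By the universal property of the free product $*_{p \in P} E_p$ (see \cite{MKS}, ch.\,4), the family $(\phi_p)_{p \in P}$ assembles into a unique group homomorphism $\Phi : *_{p \in P} E_p \to \Gamma$. The $S$-equivariance of $(\phi_p)$ translates into $\Phi(s w s^{-1}) = s\, \Phi(w)\, s^{-1}$ for all $s \in S$ and $w \in *_{p \in P} E_p$, so $\Phi$ and the inclusion $S \hookrightarrow \Gamma$ combine into a well-defined homomorphism $\phi : S \ltimes *_{p \in P} E_p \to \Gamma$ by $\phi(s \cdot w) = s\, \Phi(w)$. By construction $\phi|_S = \id$ and $\phi|_{E_q} = \phi_q$ for every $q \in Q$. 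The main (and only) delicate point is verifying that the extension $\phi_p$ is well-defined, and this is exactly where the hypothesis that each $\phi_q$ is an $S_q$-homomorphism enters.
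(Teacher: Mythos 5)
Your proof is correct and follows essentially the same route as the paper: transport each $\phi_q$ to a homomorphism $\phi_p$ by conjugation, use the $S_q$-equivariance to see that the result is independent of the chosen element of $S$ carrying $q$ to $p$, assemble the family via the universal property of the free product, and extend to the semi-direct product by $S$-equivariance. The only addition is your explicit uniqueness argument via generation by $S$ and the $E_q$, which the paper leaves implicit.
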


  \begin{proof}
  
  Let us define, for any $p\in P$, a $S_p$-homomorphism
$\phi_p:E_p\to \Gamma$  as follows.
Let $s\in S$ such that $q:=s.p$ belongs to $Q$. Set

\centerline{$\phi_p(u)=s^{-1}\phi_q(sus^{-1}) s$,}

\noindent
for any $u\in E_p$. Since $\phi_q$ is a
$S_q$-homomorphism, the defined homomorphism
$\phi_p$  only depends on $s$ modulo $S_p$. 
Moreover the collection  of homomorphisms
$(\phi_p)_{p\in P}$ induces a $S$-homomorphism
from $*_{p\in P}\, E_p$ to $\Gamma$,
which extends to the required homomorphism
$\phi:S\ltimes *_{p\in P}\, E_p\to 
 \Gamma$.
\end{proof}
 
 It follows that a mixed product 
 $S\ltimes *_{p\in P}\, E_p$ is entirely determined by $S$ and the the  $S_q$-groups $E_q$ for $q\in Q$.
 For the record, let us state

\begin{lemma}\label{mixing3} 
Let $\Gamma=S\ltimes *_{p\in P}\, E_p$ and
$\Gamma'=S\ltimes *_{p\in P}\, E'_p$ be two mixed groups.

If for any $q\in Q$, the groups $E_q$ and $E'_q$ are $S_q$-isomorphic, then the groups $\Gamma$ and
$\Gamma'$ are isomorphic.
\end{lemma}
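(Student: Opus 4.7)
The plan is to deduce Lemma \ref{mixing3} directly from the universal property established in Lemma \ref{universal}. The strategy is the standard one for objects characterized by a universal property: build a homomorphism in each direction, then check the two composites are the identity by invoking the uniqueness clause.

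First, let me fix, for each $q \in Q$, an $S_q$-isomorphism $f_q : E_q \to E'_q$. Composing with the inclusion $E'_q \hookrightarrow \Gamma'$ yields an $S_q$-homomorphism $\phi_q : E_q \to \Gamma'$. Since $\Gamma' \supset S$, Lemma \ref{universal} applies and produces a unique group homomorphism $\phi : \Gamma \to \Gamma'$ with $\phi|_S = \id$ and $\phi|_{E_q} = \phi_q$ for each $q \in Q$. Running the same argument with the inverse isomorphisms $f_q^{-1} : E'_q \to E_q$ (composed with $E_q \hookrightarrow \Gamma$) yields $\psi : \Gamma' \to \Gamma$ with $\psi|_S = \id$ and $\psi|_{E'_q} = f_q^{-1}$.

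Next I would verify that $\psi \circ \phi = \id_\Gamma$. Consider the identity map $\id_\Gamma : \Gamma \to \Gamma$; it is of course one homomorphism extending $\id_S$ on $S$ and $\id_{E_q}$ on each $E_q$. But $\psi \circ \phi$ restricts to the identity on $S$ (both $\phi$ and $\psi$ do) and, for $q \in Q$, restricts on $E_q$ to $\psi \circ \phi_q = f_q^{-1} \circ f_q = \id_{E_q}$. Both homomorphisms $\id_\Gamma$ and $\psi \circ \phi$ therefore satisfy the hypotheses of Lemma \ref{universal} for the datum $(\id_S, \id_{E_q})$, so they coincide by the uniqueness clause. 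The symmetric argument gives $\phi \circ \psi = \id_{\Gamma'}$, so $\phi$ is the desired isomorphism.

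There is essentially no obstacle here: the one point that requires a moment's attention is that the universal property in Lemma \ref{universal} was only stated for $S_q$-homomorphisms indexed by $q \in Q$, not over all of $P$, so I must take care to choose the data $(\phi_q)_{q \in Q}$ and $(f_q^{-1})_{q \in Q}$ indexed by the chosen transversal $Q$, rather than attempting a symmetric construction over all of $P$. Once that is fixed, the proof is a one-paragraph diagram chase using uniqueness twice.
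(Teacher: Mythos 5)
Your proof is correct and is exactly the argument the paper intends: the paper states Lemma \ref{mixing3} ``for the record'' without proof, immediately after observing that a mixed product is entirely determined by $S$ and the $S_q$-groups $E_q$ as a consequence of the universal property of Lemma \ref{universal}. Your two applications of existence plus two applications of uniqueness spell out precisely that implicit argument, and your remark about indexing the data over the transversal $Q$ rather than all of $P$ is the right point of care.
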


\bigskip
\noindent
{\it 2.6 Mixed products with a transitive action on $P$}

\noindent In this subsection, we show that
the mixed products with a transitive action of $S$ on $P$ are the amalgamated products $S*_A\,G$ where $A$ is a retract in $G$.

First, let $S$, $G$ be two groups sharing a common subgroup $A$ with the additional assumption that $A$ is a retract in $G$.
Therefore, we have $G=A\ltimes E$,
for some normal subgroup $E$ of $G$. Set
$\Gamma=S*_{A} G$ and let $\Gamma_1$ be the kernel
of the map $\Gamma\to S$ induced by the retraction
$G\to A\simeq G/E$. It is clear that

\centerline{$\Gamma=S\ltimes \Gamma_1$.}

\begin{lemma}\label{mixing1} 
Let $P$ be a set of representatives of $S/A$.
We have  

\centerline{$\Gamma_1\simeq *_{\gamma\in P} E^\gamma$.} 
 
In particular 
$S*_A G$ is isomorphic to the mixed product
$S\ltimes*_{\gamma\in P}\, E^\gamma$.
\end{lemma}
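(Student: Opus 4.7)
\textbf{Proof plan for Lemma \ref{mixing1}.} My plan is to construct a homomorphism $\phi : *_{\gamma \in P} E^\gamma \to \Gamma_1$ via the universal property of free products and then show it is bijective using the normal form in $\Gamma = S *_A G$ provided by Lemma \ref{words}. For each $\gamma \in P$, the conjugate $E^\gamma$ lies in $\Gamma_1 = \ker \pi$ (where $\pi : \Gamma \to S$ is the retraction induced by $G \to A$) since $E$ does and $\Gamma_1$ is normal in $\Gamma$; the universal property then yields $\phi$.

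To prove surjectivity, I would take $\gamma_1 \in \Gamma_1$ and express it as a product $s_0 g_1 s_1 g_2 \cdots g_n s_n$ with $s_i \in S$, $g_i \in G$. Using $G = A \ltimes E$, I decompose $g_i = a_i e_i$ with $a_i \in A$, $e_i \in E$, absorb each $a_i$ into the adjacent $S$-factor to obtain $\gamma_1 = t_0 e_1 t_1 e_2 \cdots e_n t_n$ with $t_i \in S$, and telescope via $\sigma_i := t_0 t_1 \cdots t_{i-1}$ to get
$$\gamma_1 = \bigl(\sigma_1 e_1 \sigma_1^{-1}\bigr)\bigl(\sigma_2 e_2 \sigma_2^{-1}\bigr) \cdots \bigl(\sigma_n e_n \sigma_n^{-1}\bigr) \cdot \sigma_{n+1}.$$
Applying $\pi$ shows $\sigma_{n+1} = \pi(\gamma_1) \in S \cap \Gamma_1 = \{1\}$. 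Writing $\sigma_i = p_i a_i$ with $p_i \in P$, $a_i \in A$, and using that $A$ normalizes $E$ in $G$, each conjugate $\sigma_i e_i \sigma_i^{-1}$ lies in $E^{p_i}$, so $\gamma_1$ is generated by $\bigcup_{p \in P} E^p$, proving surjectivity.

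For injectivity, I would take a non-trivial reduced word $w = u_1 \cdots u_k$ with $u_i = p_i e_i p_i^{-1}$, $e_i \in E \setminus \{1\}$, and $p_i \neq p_{i+1}$ in $P$. Then
$$\phi(w) = p_1 e_1 (p_1^{-1} p_2) e_2 (p_2^{-1} p_3) \cdots (p_{k-1}^{-1} p_k) e_k p_k^{-1}$$
is a non-empty alternating word in $\Gamma$: the $G$-factors $e_i$ lie in $E \setminus \{1\} \subset G \setminus A$ (since $E \cap A = 1$), and the inner $S$-factors $p_i^{-1} p_{i+1}$ lie in $S \setminus A$ (as $p_i$ and $p_{i+1}$ represent distinct $A$-cosets); the initial $p_1$ or terminal $p_k^{-1}$ is simply dropped if $p_1 = 1$ or $p_k = 1$ (taking $1 \in P$ without loss), still leaving a non-empty alternating word. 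Pushing the $A$-parts of each factor to the right and normalizing each factor to a representative in $T_1^* = P \setminus \{1\}$ or $T_2^* = E \setminus \{1\}$ (noting that $G = A \ltimes E$ identifies $E$ with the coset set $G^*/A$) preserves the alternating length, so Lemma \ref{words} then forces $\phi(w) \neq 1$. The main delicate point is this endpoint analysis together with checking that the normalization introduces no cancellations. Once $\phi$ is an isomorphism, combining it with $\Gamma = S \ltimes \Gamma_1$ and the fact that conjugation by $s \in S$ sends $E^p$ to $E^{p'}$, where $p' \in P$ represents $spA$, shows that $S$ permutes the free factors of $*_{p \in P} E^p$ along its natural action on $S/A \simeq P$, yielding the mixed-product description $S *_A G \simeq S \ltimes *_{\gamma \in P} E^\gamma$.
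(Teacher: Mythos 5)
Your proof is correct, but it follows a genuinely different route from the paper's. The paper's proof is a two-way universal-property argument: Lemma \ref{universal} produces a homomorphism $\phi\colon S\ltimes *_{\gamma\in P}\,E^\gamma\to\Gamma$ restricting to the identity on $S$ and on $E^1=E$, while the universal property of the amalgamated product $S*_AG$ (applied to the copies of $S$ and $G\simeq A\ltimes E^1$ inside the mixed product, which intersect in $A$) produces $\psi\colon\Gamma\to S\ltimes *_{\gamma\in P}\,E^\gamma$; one then checks on generators that $\phi$ and $\psi$ are mutually inverse. You instead construct only the forward map $*_{\gamma\in P}\,E^\gamma\to\Gamma_1$ and verify bijectivity directly: surjectivity via the telescoping identity $\gamma_1=\bigl(\sigma_1e_1\sigma_1^{-1}\bigr)\cdots\bigl(\sigma_ne_n\sigma_n^{-1}\bigr)\sigma_{n+1}$ together with $\sigma_{n+1}=\pi(\gamma_1)=1$, and injectivity by showing that the image of a reduced word of the free product is an alternating word in $S\setminus A$ and $G\setminus A$ and appealing to Lemma \ref{words}. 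Both arguments are sound. The paper's version is shorter and sidesteps all endpoint bookkeeping, but the content of the assertion that $\phi$ and $\psi$ are inverses is precisely the fact you prove explicitly, namely that the conjugates $E^\gamma$ generate $\Gamma_1$ and do so freely; your version makes this visible and yields a normal form for elements of $\Gamma_1$ as a by-product. The step you flag as delicate --- that normalizing each syllable modulo $A$ creates no cancellation --- is already guaranteed by the setup of Section 2.2, since $A\cdot G_i^{*}=G_i^{*}$, so an alternating product of elements of $G_1^{*}$ and $G_2^{*}$ of length $n\geq 1$ always reduces to a word of the same type and is therefore nontrivial.
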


\begin{proof} We can assume that $1\in P$.
By Lemma \ref{universal}, there is  a unique homomorphism 
$\phi:S\ltimes*_{\gamma\in P}\, E^\gamma\to\Gamma$
such that its restriction to $E^1=E$ and to $S$ is the identity. Conversely, the group $S\ltimes*_{\gamma\in P}\, E^\gamma$ contains the subgroups $S$ and $G\simeq A\ltimes E^1$ whose intersection is $A$. Hence, the universal property of amalgamated products provides a natural homomorphism 
$\psi:\Gamma\to\ltimes*_{\gamma\in P}\, E^\gamma$.
Clearly, $\phi$ and $\psi$ are inverses of each other,
what shows the lemma.
\end{proof}

Conversely, let  $\Gamma=S\ltimes *_{p\in P} E_p$ be a mixed product. 

\begin{lemma}\label{mixing2} Assume that $S$ acts transitively on $P$. Then we have

\centerline{$S\ltimes (*_{p\in P} E_p)\simeq 
S*_{S_q} (S_q\ltimes E_q)$,}

\noindent where $q$ is any chosen point in $P$.
\end{lemma}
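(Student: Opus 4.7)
The strategy is to construct mutually inverse homomorphisms between the two groups by invoking two universal properties: Lemma \ref{universal} for the mixed product, and the defining universal property of the amalgamated product.

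For the forward map $\phi : S\ltimes *_{p\in P} E_p \to S *_{S_q}(S_q\ltimes E_q)$, I exploit transitivity: the singleton $\{q\}$ is a set of representatives of $P/S$, so by Lemma \ref{universal} it suffices to specify the identity on $S$ together with an $S_q$-homomorphism $E_q \to S *_{S_q}(S_q\ltimes E_q)$. Take the obvious inclusion $E_q\hookrightarrow S_q\ltimes E_q$; the $S_q$-equivariance amounts to the remark that the conjugation action of $s\in S_q$ on $u\in E_q$ may be computed inside $G=S_q\ltimes E_q$ (since $S_q$ is a common subgroup of $S$ and $G$ in the amalgamation), where it coincides with the original semidirect action.

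For the reverse map $\psi : S *_{S_q}(S_q\ltimes E_q)\to S\ltimes *_{p\in P} E_p$, the universal property of the amalgamated product reduces matters to producing homomorphisms from $S$ and from $G=S_q\ltimes E_q$ that agree on $S_q$. Take the tautological inclusion $S\hookrightarrow S\ltimes *_{P} E_p$ for the first. For the second, observe that every $s\in S_q$ satisfies $E_q^s = E_{s.q}=E_q$, so $S_q$ normalises $E_q$ inside the mixed product, and the resulting subgroup $S_q\cdot E_q$ is canonically isomorphic to $S_q\ltimes E_q$; this yields the second homomorphism, and it agrees with the first on $S_q$.

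To conclude, both $\phi\circ\psi$ and $\psi\circ\phi$ restrict to the identity on $S$ and on $E_q$ by construction, so it suffices to check that $S\cup E_q$ generates both groups. In the mixed product, transitivity gives for each $p\in P$ an $s\in S$ with $s.q=p$, whence $E_p = sE_q s^{-1}\subset \langle S,E_q\rangle$. In the amalgamated product, $G=\langle S_q,E_q\rangle\subset\langle S,E_q\rangle$, and the amalgamation is generated by $S\cup G$. The only genuinely technical point is the $S_q$-equivariance required in the construction of $\phi$, which rests on the fact that the two copies of $S_q$ (in $S$ and in $G$) are identified coherently inside $S *_{S_q} G$; once this is granted, the rest of the argument is bookkeeping with universal properties.
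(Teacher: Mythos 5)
Your proof is correct and follows exactly the route the paper indicates: the proof of Lemma \ref{mixing2} is skipped in the text, but is stated to be "based on universal properties, as the previous proof" (that of Lemma \ref{mixing1}), i.e.\ one map from Lemma \ref{universal} with $Q=\{q\}$, one from the universal property of the amalgamated product, and a check that they are mutually inverse on generators. Your additional verifications (the $S_q$-equivariance of the inclusion $E_q\hookrightarrow S_q\ltimes E_q$, and that $S_q\cdot E_q\simeq S_q\ltimes E_q$ inside the mixed product) are precisely the details the paper leaves implicit.
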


The proof of the  Lemma \ref{mixing2} will be skipped.
Indeed it is based on universal properties, as the previous proof.

\bigskip
\noindent
{\it 2.7 The group $\Aut_S\,\na^2_K$ is a mixed product}

\noindent For a subgroup  $S$ of $\GL(2,K)$,
recall that 

\centerline{$\Aut_S\,\na^2_K:=
\{\phi\in\Aut_0\,\na_K^2\mid 
\d\phi_{\bf 0}\in S\}$.}

\noindent As usual, a line $\delta\in  \P^1_K$
has {\it projective coordinates} $(a;b)$ if 
$\delta=K.(a,b)$. For such a $\delta$, 
let $E_\delta(K)\subset \Aut\,\na^2_K$
be the subgroup

\centerline{$E_{\delta}(K):=\{(x,y)\mapsto 
(x,y)+f(bx-ay) (a,b)\mid f\in t^2K[t]\}$.}

\noindent Let $\gamma\in \GL(2,K)$ such that
$\gamma.\delta_0=\delta$  where 
$\delta_0\in \P^1_K$ has coordinates $(0;1)$.
Then we have 
$E_{\delta_0}(K)=E(K)$ and 
$E_{\delta}(K)=E(K)^\gamma$.

\begin{lemma}\label{mixed} We have

\centerline{$\Aut_S\,\na^2_K\simeq S\ltimes\,
*_{\delta\in  \P^1_K} E_\delta(K)$.}
\end{lemma}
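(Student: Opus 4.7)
The plan is to handle $S=\GL(2,K)$ first using van der Kulk's theorem together with the structural lemmas of this section, and then obtain the general case by pulling back under the differential at the origin.

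For $S = \GL(2,K)$, I would apply Lemma \ref{subamal}(ii) to $\Aut\,\na^2_K = \Aff(2,K)*_{B_{\Aff}(K)}\Elem(K)$ with $\Gamma' := \Aut_0\,\na^2_K$. The hypothesis $\Gamma'\cdot B_{\Aff}(K) = \Aut\,\na^2_K$ holds because $B_{\Aff}(K)$ contains all translations (take $z_1=z_2=1$, $\alpha=0$ in its defining description) and every automorphism decomposes as a translation composed with an element fixing the origin. Direct inspection of the three intersections gives $\Aff(2,K)\cap\Aut_0\,\na^2_K = \GL(2,K)$, $\Elem(K)\cap\Aut_0\,\na^2_K = \Elem_0(K)$ and $B_{\Aff}(K)\cap\Aut_0\,\na^2_K = B_{GL}(K)$, hence
\[
\Aut_0\,\na^2_K \simeq \GL(2,K)*_{B_{GL}(K)}\Elem_0(K).
\]
By the conventions of Section 1.3, $\Elem_0(K) = B_{GL}(K)\ltimes E(K)$, so $B_{GL}(K)$ is a retract in $\Elem_0(K)$. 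Applying Lemma \ref{mixing1}, together with the identification $\GL(2,K)/B_{GL}(K) \simeq \P^1_K$ via $\gamma \mapsto \gamma.\delta_0$, where $\delta_0$ is the line with coordinates $(0;1)$ stabilized by the lower-triangular $B_{GL}(K)$, and the identity $E(K)^\gamma = E_{\gamma.\delta_0}(K)$ recorded just before the statement, yields
\[
\Aut_0\,\na^2_K \simeq \GL(2,K)\ltimes *_{\delta\in \P^1_K}E_\delta(K).
\]

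For a general subgroup $S\subset \GL(2,K)$, the decisive observation is that every element of $E_\delta(K)$ has differential $\id$ at $\mathbf{0}$, because the condition $f\in t^2K[t]$ kills the linear part of the perturbation. Hence the free product factor $N := *_{\delta\in\P^1_K}E_\delta(K)$ is contained in $\Aut_1\,\na^2_K$, and the semi-direct projection $\Aut_0\,\na^2_K \to \GL(2,K)$ coincides with $\phi \mapsto \d\phi|_{\mathbf{0}}$. By definition $\Aut_S\,\na^2_K$ is the preimage of $S$ under this homomorphism, which in a semi-direct product decomposes as $S\ltimes N$. The induced $S$-action on $N$ permutes factors by $s\,E_\delta(K)\,s^{-1} = E_{s.\delta}(K)$, which is the desired mixed product structure over the $S$-set $\P^1_K$.

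The only nontrivial point in the argument is the verification of the hypothesis $\Gamma'\cdot A = \Gamma$ in the first step; everything else is straightforward bookkeeping with the lemmas of Section 2.
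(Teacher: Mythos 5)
Your proposal is correct and follows essentially the same route as the paper: reduce to $S=\GL(2,K)$, apply Lemma \ref{subamal}(ii) to van der Kulk's decomposition using the fact that $B_{\Aff}(K)$ contains the translations, and then convert the amalgamated product $\GL(2,K)*_{B_{\GL}(K)}\Elem_0(K)$ into a mixed product via Lemma \ref{mixing1}. You merely make explicit two points the paper leaves to the reader (the intersection computations and the reduction from general $S$ via the preimage of $S$ under $\phi\mapsto \d\phi|_{\mathbf 0}$), both of which check out.
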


\begin{proof} Clearly, it is enough to prove the statement for $S=\GL(2,K)$.
Since $B_{\Aff}(K)$ contains the translations, we have
$B_{\Aff}(K).\Aut_0\,\na^2_K=\Aut\,\na^2_K$. Therefore by
van der Kulk's Theorem and
Lemma \ref{subamal}, we have

\centerline
{$\Aut_0\,\na^2_K\simeq \GL(2,K)*_{B_{\GL}(K)}\Elem_0(K)$.}

Since $\Elem_0(K)=B_{\GL}(K)\ltimes E(K)$, it follows from Lemma \ref{mixing1} that

\hskip2.6cm
$\Aut_0\,\na^2_K\simeq \GL(2,K)\ltimes\,
*_{\delta\in  \P^1_K} E_\delta(K)$.
\end{proof}

\bigskip
\noindent
{\it 2.8 Mixed product with an almost free transitive action on $P$}

\noindent The action of $S$ on a set $P$ is called
{\it almost free transitive} if $P$ consists of a fixed point and a free orbit under $S$. (It will be tacitly assumed that $S\neq 1$, so the fixed point and the free orbit are well defined.) In this subsection we show
that the mixed products with an almost free transitive action of $S$ on $P$ are the free products
$G*G'$, where $S$ is a retract in $G$.

First let $\Gamma=S\ltimes *_{p\in P} E_p$ be a mixed product. 

\begin{lemma}\label{almostft1} Assume that the action of $S$ on $P$ is
almost free transitive. Then $\Gamma$ is isomorphic to the free product

\centerline{
$(S\ltimes E_{p_0})*E_{p_\infty}$,}

\noindent where $p_0\in P$ is the fixed point and
$p_\infty\in P$ is any point of the free orbit.
\end{lemma}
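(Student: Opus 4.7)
The plan is to pin down both groups through their universal properties and exhibit mutually inverse homomorphisms between them. Since $P$ consists of a fixed point $p_0$ and a free $S$-orbit through $p_\infty$, the set $Q:=\{p_0, p_\infty\}$ is a set of representatives of $P/S$, with stabilizers $S_{p_0}=S$ and $S_{p_\infty}=\{1\}$. By Lemma \ref{universal}, the mixed product $\Gamma$ is characterized by the $S$-group $E_{p_0}$ together with the (bare) group $E_{p_\infty}$. Similarly, the free product $\Delta:=(S\ltimes E_{p_0})*E_{p_\infty}$ is characterized by homomorphisms from $S\ltimes E_{p_0}$ and from $E_{p_\infty}$ into an ambient group.

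First I would construct a homomorphism $\psi:\Delta\to\Gamma$. Since $S$ fixes $p_0$, the subgroup $E_{p_0}\subset *_{p\in P}E_p$ is $S$-stable in $\Gamma$, so $S\ltimes E_{p_0}$ sits naturally as a subgroup of $\Gamma$. Moreover, $E_{p_\infty}$ is one of the free factors of $*_{p\in P}E_p$, hence embeds into $\Gamma$. The universal property of the free product then furnishes $\psi$.

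Next I would construct the inverse $\phi:\Gamma\to\Delta$ by invoking Lemma \ref{universal}. I need an $S_{p_0}$-homomorphism $\phi_{p_0}:E_{p_0}\to\Delta$ and an $S_{p_\infty}$-homomorphism $\phi_{p_\infty}:E_{p_\infty}\to\Delta$, along with the identity on $S$. For $\phi_{p_0}$, take the natural inclusion $E_{p_0}\hookrightarrow S\ltimes E_{p_0}\hookrightarrow \Delta$; this is $S$-equivariant since in $S\ltimes E_{p_0}$ the conjugation action of $S$ on $E_{p_0}$ coincides with its given $S$-action. For $\phi_{p_\infty}$, take the inclusion $E_{p_\infty}\hookrightarrow\Delta$, which is trivially $S_{p_\infty}$-equivariant because $S_{p_\infty}=\{1\}$. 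Lemma \ref{universal} then produces $\phi$.

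To conclude, observe that both compositions $\phi\circ\psi$ and $\psi\circ\phi$ fix $S$, $E_{p_0}$ and $E_{p_\infty}$ pointwise; since $\Delta$ is generated by $S$, $E_{p_0}$ and $E_{p_\infty}$, and $\Gamma$ is generated by $S$ together with the $S$-conjugates of $E_{p_0}$ and $E_{p_\infty}$ (which automatically match up under $\phi$ and $\psi$), both compositions are the identity. The main conceptual point, rather than any technical obstacle, is the recognition that in the almost free transitive case the free-orbit factors $E_{s.p_\infty}$ for $s\in S$ can all be encoded by a single free factor $E_{p_\infty}$ in a free product, with $S$ moving it via conjugation in the ambient group; everything else is a mechanical application of the universal properties already established.
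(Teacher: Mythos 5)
Your proof is correct and follows exactly the route the paper intends: the paper skips this proof, noting only that it is "based on universal properties" in the same pattern as Lemma \ref{mixing1}, and your argument—building mutually inverse homomorphisms from Lemma \ref{universal} on one side and the universal property of the free product on the other, then checking they agree on generators—is precisely that pattern carried out. No gaps.
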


Conversely let $\Gamma=(S\ltimes E)*F$ be a free product, where $E$ is a $S$-group and  $F$ is another group. 

\begin{lemma}\label{almostft2} The group $\Gamma$ is isomorphic to the the mixed product

\centerline{$S\ltimes (E* (*_{s\in S} F_s))$,}

\noindent where $F_s$ denotes a copy of $F$, 
for any $s\in S$.
\end{lemma}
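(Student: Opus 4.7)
The plan is to build mutually inverse homomorphisms between the free product $(S\ltimes E)*F$ and the mixed product $\Gamma:=S\ltimes(E*(*_{s\in S}F_s))$ by invoking, respectively, the universal property of free products and the universal property of mixed products (Lemma \ref{universal}).

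First I set up the underlying combinatorics. The $S$-set is $P=\{p_0\}\sqcup S$, where $p_0$ is a fixed point and $S$ acts on itself by left translation; the associated $S_p$-groups are $E_{p_0}=E$ and $E_s=F_s\cong F$ for $s\in S$. I pick $Q=\{p_0,1\}$ as orbit representatives, with stabilizers $S_{p_0}=S$ and $S_1=\{1\}$.

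To construct a forward map $\alpha:(S\ltimes E)*F\to \Gamma$, I appeal to the universal property of the free product. On the first factor $S\ltimes E$, I use the tautological inclusion into $\Gamma$; on the second factor $F$, I use the embedding $F\cong F_1\hookrightarrow *_{s\in S}F_s\hookrightarrow\Gamma$. Going the other way, to build $\beta:\Gamma\to(S\ltimes E)*F$, I invoke Lemma \ref{universal}. I take the map $E\to (S\ltimes E)*F$ to be the natural inclusion, which is an $S$-homomorphism because $S$ already acts on $E$ by conjugation inside $S\ltimes E\subset(S\ltimes E)*F$; I take the map $F=F_1\to (S\ltimes E)*F$ to be the natural inclusion, which is vacuously $S_1$-equivariant. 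Lemma \ref{universal} then yields $\beta$ with $\beta|_S=\id$.

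It remains to verify $\beta\circ\alpha=\id$ and $\alpha\circ\beta=\id$. Both compositions restrict to the identity on the generating sets $S$, $E$, and $F=F_1$ by construction, so the only nontrivial check concerns the factors $F_s$ for $s\neq 1$. These sit inside $\Gamma$ as the conjugates $sF_1s^{-1}$, and by the explicit formula in the proof of Lemma \ref{universal} the restriction of $\beta$ to $F_s$ is obtained from $\beta|_{F_1}$ by conjugation by $s$; since $\alpha$ also commutes with conjugation by elements of $S$, both compositions act as the identity on each $F_s$. The only real difficulty—the main obstacle, if one can call it that—is to fix once and for all a convention identifying $F_s$ with $F$ so that the $S$-action $F_s\to F_{s's}$ and the embeddings into both sides are coherent; once that is pinned down, the verification is a short diagram chase.
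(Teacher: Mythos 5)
Your proof is correct and is exactly the argument the paper has in mind: the paper explicitly skips the proofs of Lemmas \ref{almostft1} and \ref{almostft2}, remarking only that they follow the same universal-property pattern as the earlier lemmas on mixed products (cf.\ the proof of Lemma \ref{mixing1}), which is precisely your construction of mutually inverse homomorphisms via the universal property of the free product on one side and Lemma \ref{universal} on the other. The verification on the conjugate factors $F_s=F_1^s$ and the coherence of the identifications are handled correctly.
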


The easy proofs of the previous two lemmas, 
which follow the same pattern as Lemma
\ref{mixing3}, will be skipped.

\bigskip
\noindent
{\it 2.9 The group $\Aut_{U(K)}\,\na^2_K$ is a free product}

\noindent Recall that  $U(K)$ is the group of linear transforms $(x,y)\mapsto (x,y+ax)$, for some
$a\in K$.
Let $\delta_0,\,\delta_\infty\in\P^1_K$ be the points with projective coordinates $(0;1)$ and $(1;0)$. The group $E_{\delta_0}(K)=E(K)$ commutes with $U(K)$.

\begin{lemma}\label{free2} We have

\centerline{
$\Aut_{U(K)}\,\na^2_K\simeq (U(K)\times E_{\delta_0}(K))*E_{\delta_\infty}(K)$.}
\end{lemma}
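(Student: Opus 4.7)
The plan is to chain together the structural results already established in Sections 2.7 and 2.8, so the proof is essentially bookkeeping.

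First I would apply Lemma \ref{mixed} to the subgroup $S=U(K)\subset \GL(2,K)$ to obtain
\[\Aut_{U(K)}\,\na^2_K\simeq U(K)\ltimes\,\bigl(*_{\delta\in  \P^1_K} E_\delta(K)\bigr).\]
The goal is then to recognise the right-hand side as a mixed product of the type treated in Subsection 2.8, in order to invoke Lemma \ref{almostft1}.

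Next I would analyse the action of $U(K)$ on $\P^1_K$ induced from its action on $K^2$. A direct calculation with $(x,y)\mapsto(x,y+ax)$ shows that the line $\delta_0=K.(0,1)$ is fixed, while for any line $\delta=K.(1,b)$ in the complement, the image under the element $a\in U(K)$ is $K.(1,b+a)$; in particular the stabiliser of $\delta_\infty=K.(1,0)$ in $U(K)$ is trivial and the $U(K)$-orbit of $\delta_\infty$ is exactly $\P^1_K\setminus\{\delta_0\}$. Thus $\P^1_K$ decomposes as a fixed point $\delta_0$ together with a free $U(K)$-orbit, i.e.\ the action is almost free transitive in the sense of Subsection 2.8.

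Applying Lemma \ref{almostft1} with $p_0=\delta_0$ and $p_\infty=\delta_\infty$ now yields
\[\Aut_{U(K)}\,\na^2_K\simeq \bigl(U(K)\ltimes E_{\delta_0}(K)\bigr)* E_{\delta_\infty}(K).\]
Finally I would use the observation (recorded just before the lemma statement) that $E_{\delta_0}(K)=E(K)$ commutes elementwise with $U(K)$: both consist of transformations of the form $(x,y)\mapsto (x,y+\text{something}(x))$, whose $y$-components add, so conjugation is trivial. Consequently the semidirect product $U(K)\ltimes E_{\delta_0}(K)$ is in fact a direct product, and substituting this identification gives the stated isomorphism. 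There is no real obstacle here; the only point that merits explicit verification is the almost free transitivity of the $U(K)$-action on $\P^1_K$, which is the place where the specific geometric nature of $U(K)$ enters.
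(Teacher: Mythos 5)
Your proposal is correct and follows exactly the paper's own argument: apply Lemma \ref{mixed} with $S=U(K)$, observe that the action of $U(K)$ on $\P^1_K$ is almost free transitive (fixed point $\delta_0$, free orbit through $\delta_\infty$), and conclude by Lemma \ref{almostft1}, with the semidirect product degenerating to a direct product because $E_{\delta_0}(K)=E(K)$ commutes with $U(K)$. The extra verifications you spell out (the orbit computation and the commutation) are exactly the facts the paper takes as already noted, so there is nothing to add.
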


\begin{proof} By Lemma \ref{mixed}, we have
$\Aut_{U(K)}\,\na^2_K\simeq U(K)\ltimes\,
*_{\delta\in  \P^1_K} E_\delta(K)$. Since the action
of $U(K)$ on $\P^1_K$ is almost free transitive, the assertion  follows from Lemma \ref{almostft1}.
\end{proof}

\bigskip\noindent
{\it 2.10 A corollary}

\begin{Cor}\label{Card}
Let $K$, $L$ be  fields such that
$\Card\, K=\Card\, L$ and $\ch\, K=\ch\, L$.
We have

\centerline {$\Aut_1\,\na^2_K\simeq \Aut_1\,\na^2_L$
and $\Aut_{U(K)}\,\na^2_K\simeq 
\Aut_{U(L)}\,\na^2_L$.}
\end{Cor}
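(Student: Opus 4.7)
The idea is to combine the structural descriptions supplied by Lemmas~\ref{mixed} and~\ref{free2} with the observation that the building blocks are abelian groups whose isomorphism type depends only on the pair $(\ch K,\Card K)$.

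First, specializing Lemma~\ref{mixed} to $S=\{1\}$ yields $\Aut_1\,\na^2_K\simeq *_{\delta\in\P^1_K}\,E_\delta(K)$, and Lemma~\ref{free2} gives $\Aut_{U(K)}\,\na^2_K\simeq (U(K)\times E_{\delta_0}(K))*E_{\delta_\infty}(K)$. Each factor appearing in these two free products is abelian: $E_\delta(K)$ is the additive group of $t^2K[t]$, and $U(K)\simeq(K,+)$. If $K$ is finite, the hypothesis $\Card K=\Card L$ and $\ch K=\ch L$ already forces $K\simeq L$ as fields and the conclusion is trivial, so I would assume from now on that $K$ and $L$ are both infinite.

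The main step is then to note that each abelian factor is an $\F$-vector space whose dimension equals $\Card K$. For an infinite field $K$ one has $\dim_\F K=\Card K$, whence
\[
\dim_\F E_\delta(K)=\aleph_0\cdot\Card K=\Card K,\qquad \dim_\F\bigl(U(K)\times E_{\delta_0}(K)\bigr)=\Card K.
\]
Under the hypotheses $\Card K=\Card L$ and $\ch K=\ch L$, each factor in the $K$-version of the formulas is therefore isomorphic, as an abelian group, to the corresponding factor in the $L$-version; moreover $\Card\P^1_K=\Card K=\Card L=\Card\P^1_L$ supplies a bijection of the indexing sets used in the first free product.

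Finally, a free product is the coproduct in the category of groups, so any family of isomorphisms between the factors, combined with a bijection of the indexing set in the $\Aut_1$ case, canonically induces an isomorphism of the free products. This yields the two required isomorphisms simultaneously. The only point requiring care is $\dim_\F K=\Card K$ for infinite $K$; this is a standard cardinal-arithmetic fact, since $K$ is generated as an $\F$-algebra by a set of cardinality $\Card K$ while containing an $\F$-linearly independent family of the same cardinality. No other obstacle is expected.
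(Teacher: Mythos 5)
Your proof is correct and follows essentially the same route as the paper: reduce to the case of infinite $K$, use Lemmas~\ref{mixed} and~\ref{free2} to exhibit both groups as free products whose factors are $\F$-vector spaces of dimension $\aleph_0\cdot\dim_\F K=\Card K$, and conclude from the coproduct property of free products. (One cosmetic slip: the intermediate claim $\dim_\F K=\Card K$ fails for $K=\Q$, but the quantity you actually need, $\aleph_0\cdot\dim_\F K$, equals $\Card K$ for every infinite $K$, which is exactly how the paper phrases it.)
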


\begin{proof} It can be assumed that $K$ is infinite.
Let $\F$ be its prime subfield and let
$E$ be a $\F$-vector space with
$\dim_\F\,E=\aleph_0\,[K:\F]=\Card\,K$.

By Lemma \ref{mixed}, 
$\Aut_1\,\na_K^2$ is a free product of 
$\Card K$ copies of $E$, from which its follows
that $\Aut_1\,\na_K^2$ only depends on
the cardinality and the characteristic of the field $K$, hence we have $\Aut_1\,\na^2_K\simeq \Aut_1\,\na^2_L$.

The proof that $\Aut_{U(K)}\,\na^2_K\simeq 
\Aut_{U(L)}\,\na^2_L$ is identical.
\end{proof}

\section{Linearity over Rings vs. over Fields}

\noindent In this section, we show Corollaries \ref{criterion1} and 
\ref{criterion2}. They state 
that, under a mild assumption, a mixed product, 
or an amalgamated product,
which is linear over a ring, is 
automatically linear over a field.

\bigskip 
\noindent {\it 3.1 Linearity Properties}

\noindent
For a group,  the linearity over a field
is the strongest linearity property. Besides  the case of 
{\it prime} rings, i.e. the subrings of a field,
a group which is linear over a ring $R$ is not necessarily linear over a field. Two 
relevant examples are provided in the subsection 8.6.

On the opposite, there are    groups 
containing a f.g.  subgroup which is
not linear, even over a ring. These groups are nonlinear in the strongest sense.

\bigskip\noindent
{\it 3.2 Minimal embeddings}

\noindent Let $R$ be a commutative ring and
let $\Gamma$ be a subgroup of $\GL(n,R)$
for some $n\geq 1$. 
The embedding $\Gamma\subset \GL(n,R)$ is called {\it minimal} if
for any  ideal $J\neq \{0\}$ we have $\Gamma\cap \GL(n,J)\neq \{1\}$.

\begin{lemma} \label{MinEmb} Let $\Gamma$ be a subgroup of  $\GL(n,R)$. For some ideal $J$, the induced homomorphism $\Gamma\to\GL(n,R/J)$ is a minimal embedding.
\end{lemma}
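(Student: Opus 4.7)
The natural approach is a Zorn's lemma argument on the poset of ``injectivity-preserving'' ideals.

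Let $\mathcal{F}$ denote the set of all ideals $J\subset R$ for which $\Gamma\cap \GL(n,J)=\{1\}$, equivalently, for which the induced homomorphism $\Gamma\to \GL(n,R/J)$ is injective. Since $\GL(n,\{0\})=\{1\}$, the zero ideal lies in $\mathcal{F}$, so $\mathcal{F}$ is non-empty. I would order $\mathcal{F}$ by inclusion and produce a maximal element $J$ via Zorn's lemma; the proposed $J$ will then provide the required minimal embedding.

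The first step is to verify the chain condition. Given a totally ordered family $(J_\alpha)$ in $\mathcal{F}$, set $J=\bigcup_\alpha J_\alpha$, which is an ideal. If $\gamma\in \Gamma\cap \GL(n,J)$, then $\gamma=\id+A$ where the $n^2$ entries of $A$ lie in $J$; finiteness of the entries forces them all to lie in some common $J_\alpha$, so $\gamma\in \Gamma\cap \GL(n,J_\alpha)=\{1\}$. Hence $J\in\mathcal{F}$ and Zorn's lemma applies. Let $J$ be a maximal element.

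By construction $\Gamma$ embeds into $\GL(n,R/J)$; write $\overline{\Gamma}$ for its image. It remains to check minimality: for any non-zero ideal $\overline{J}'\subset R/J$, the subgroup $\overline{\Gamma}\cap \GL(n,\overline{J}')$ is non-trivial. The idea is to lift: let $J'\subset R$ be the preimage of $\overline{J}'$ under $R\to R/J$, so $J'\supsetneq J$. Maximality of $J$ forces $J'\notin\mathcal{F}$, hence there exists a non-identity element $\gamma\in \Gamma\cap \GL(n,J')$. Its image $\bar\gamma$ in $\overline{\Gamma}$ is still non-trivial (since $\Gamma\to\GL(n,R/J)$ is injective and $\gamma\neq 1$), and $\bar\gamma\equiv \id \pmod{\overline{J}'}$, so $\bar\gamma\in \overline{\Gamma}\cap \GL(n,\overline{J}')$. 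This produces the desired non-trivial element and establishes minimality.

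I do not anticipate a serious obstacle: the argument is essentially formal, the only substantive point being the remark that $\GL(n,J)$ is detected by the entries of $\id+A$, which makes the chain step work. The proof goes through for any $n\geq 1$ and any commutative ring $R$ without further hypotheses.
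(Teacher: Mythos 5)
Your proof is correct and follows exactly the paper's argument: Zorn's Lemma applied to the poset of ideals $J$ with $\Gamma\cap\GL(n,J)=\{1\}$, using the fact that a matrix has finitely many entries to handle unions of chains. You merely spell out the final verification of minimality (lifting a nonzero ideal of $R/J$ to $J'\supsetneq J$), which the paper leaves implicit.
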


\begin{proof} Since $R$ could be non-noetherian,
the  proof requires Zorn's Lemma.

Let $\cal S$ be the set of all ideals $J$
of $R$ such that $\Gamma\cap \GL(n,J)=\{1\}$. With respect to the inclusion, ${\cal S}\ni \{ 0 \}$ is a nonempty poset.  For any chain $\cal C\subset\cal S$, the ideal 
$\cup_{I\in{\cal C}}\,I$ belongs to $\cal S$. Therefore Zorn's Lemma implies that $\cal S$ contains a maximal element $J$. It follows that the induced homomorphism $\Gamma\to\GL(n,R/J)$ is a minimal embedding.
\end{proof}

\bigskip\noindent
{\it 3.3 Groups with  trivial normal centralizers}

\noindent 
By definition, a group $\Gamma$ 
 has {\it the trivial normal centralizers property} if, for any subset $S\not\subset\{1\}$, its centralizer
 $C_\Gamma(S)$ is not normal, except if $C_\Gamma(S)$
 is the trivial group. Equivalently, if $H_1$ and $H_2$  are commuting normal subgroups of $\Gamma$, then one of them is trivial.

\begin{lemma}\label{linearity} Let  $\Gamma$ be a  group with the trivial normal centralizers property.

If $\Gamma$ is linear over a ring, 
then $\Gamma$ is also linear over a field.
\end{lemma}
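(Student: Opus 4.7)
The plan is to reduce to a minimal embedding via Lemma \ref{MinEmb}, and then to leverage the trivial normal centralizers hypothesis to force the ring of coefficients to be an integral domain, so that one may pass to its fraction field.

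First, starting from a given embedding $\Gamma\subset\GL(n,R)$, I would invoke Lemma \ref{MinEmb} to replace $R$ by a suitable quotient $R/J$ and assume that the embedding $\Gamma\subset\GL(n,R)$ is \emph{minimal}. The heart of the argument is then to prove that, under this minimality, $R$ is necessarily a domain.

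To establish this, I would argue by contradiction: suppose there are nonzero $a,b\in R$ with $ab=0$, and set $I=(a)$ and $J=(b)$, both nonzero ideals satisfying $IJ=0$. The key elementary observation is that the congruence subgroups $\GL(n,I)$ and $\GL(n,J)$ commute elementwise, because for $1+A\in \GL(n,I)$ and $1+B\in \GL(n,J)$ the entries of $AB$ and $BA$ lie in $IJ=0$, hence $(1+A)(1+B)=1+A+B=(1+B)(1+A)$. Since $\GL(n,I)$ and $\GL(n,J)$ are normal in $\GL(n,R)$, the subgroups $H_I:=\Gamma\cap\GL(n,I)$ and $H_J:=\Gamma\cap\GL(n,J)$ are commuting normal subgroups of $\Gamma$. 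By minimality of the embedding, both $H_I$ and $H_J$ are nontrivial, contradicting the trivial normal centralizers property. Hence $R$ has no nonzero zero-divisors.

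To conclude, I embed $R$ into its fraction field $L=\mathrm{Frac}(R)$; then the composition $\Gamma\subset\GL(n,R)\subset\GL(n,L)$ is the desired linear representation over a field. The only step that requires any ingenuity is the commuting congruence subgroups calculation; once it is noted, the trivial normal centralizers assumption immediately finishes the argument, so I do not anticipate a serious obstacle.
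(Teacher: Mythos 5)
Your proposal is correct and follows essentially the same route as the paper: reduce to a minimal embedding via Lemma \ref{MinEmb}, observe that ideals $I,J$ with $IJ=0$ yield commuting normal subgroups $\Gamma\cap\GL(n,I)$ and $\Gamma\cap\GL(n,J)$, conclude from the trivial normal centralizers property and minimality that $R$ is prime, and pass to the fraction field. The only difference is cosmetic: you spell out the elementwise commutation $(1+A)(1+B)=1+A+B=(1+B)(1+A)$, which the paper asserts without computation.
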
 

\begin{proof} By hypothesis and
Lemma \ref{MinEmb}, there exists a minimal
embedding $\rho:\Gamma\subset \GL(n,R)$ for some commutative ring $R$.
The case $\Gamma=\{1\}$ can be  excluded, so we will assume that $R\neq \{0\}$.

Let $I_1,\,I_2$ be ideals 
of $R$ with $I_1.I_2=0$. Since $H_1:=\Gamma\cap \GL(n,I_1)$ and  
$H_2:=\Gamma\cap \GL(n,I_2)$
are commuting  normal subgroups of $\Gamma$,  one of them is trivial. Since $\rho$ is minimal, $I_1$ or $I_2$ is the zero ideal. Thus $R$ is prime.

It follows that $\Gamma\subset \GL(n,K)$, where 
$K$ is the fraction field of $R$.
\end{proof}

\bigskip\noindent
{\it 3.4 Amalgamated products with a trivial core}

\noindent
Let  $G_1$, $G_2$ be two groups sharing a common subgroup $A$ and set $\Gamma=G_1*_A G_2$.

 Let $\Sigma$ be the set of all finite alternating sequences
 ${\boldsymbol \epsilon}=(\epsilon_1,\dots,\epsilon_n)$ of ones and twos.
For $i,\,j\in\{1,2\}$, let $\Sigma_{i,j}$ be
be the subset of all ${\boldsymbol \epsilon}=(\epsilon_1,\dots,\epsilon_n)\in \Sigma$
starting with $i$ and ending with $j$ and 
let $\Gamma_{i,j}$ be the set of all $\gamma\in \Gamma$
of type ${\boldsymbol \epsilon}$ for some ${\boldsymbol  \epsilon}\in \Sigma_{i,j}$.
Therefore we have
 
\centerline{ $\Gamma=A\sqcup \Gamma_{1,1}
 \sqcup \Gamma_{2,2}\sqcup \Gamma_{1,2}\sqcup \Gamma_{2,1}$.}

 By definition, the amalgamated product 
 $G_1*_A\,G_2$  is called {\it nondegenerate} if
 $G_1\neq A$ and $G_2\neq A$. It is called 
{\it dihedral} if $G_1=G_2=\Z/2\Z$, and $A=\{1\}$,
and {\it nondihedral} otherwise.

 \begin{lemma}\label{conj1} Let $\Gamma=G_1*_A\,G_2$ be a 
 nondegenerate and nondihedral
 amalgamated product such that $\Core_\Gamma(A)$ is trivial.
 
 For any element $g\neq 1$ of $\Gamma$,
 there are 
 $\gamma_1,\,\gamma_2\in \Gamma$ such that
 
 \centerline {$g^{\gamma_1}\in \Gamma_{1,1}$ and
 $g^{\gamma_2}\in \Gamma_{2,2}$.}
 
In particular $\Gamma$ has the trivial normal centralizers property.
 \end{lemma}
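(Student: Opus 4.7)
The plan is to establish the conjugation claim by a case analysis on the reduced-form type of $g$, after first disposing of the case $g \in A$ and after extracting a crucial index dichotomy from the hypotheses.

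First, I would handle the case $g \in A \setminus \{1\}$: the triviality of $\Core_\Gamma(A) = \bigcap_{\gamma \in \Gamma} \gamma A \gamma^{-1}$ produces some $\gamma_0$ with $\gamma_0^{-1} g \gamma_0 \notin A$, so replacing $g$ by this conjugate reduces to the situation $g \in \Gamma_{i,j}$ for some $i,j \in \{1,2\}$. The crucial next observation is that $[G_1:A] = 2$ and $[G_2:A] = 2$ cannot hold simultaneously: if they did, then $A$ would be normal in each of $G_1, G_2$ (as any index-2 subgroup is normal), hence normal in the group $\Gamma = G_1 *_A G_2$ they generate, forcing $A = \Core_\Gamma(A) = \{1\}$ and thus $G_1 = G_2 = \Z/2\Z$, the excluded dihedral case. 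So I may assume, without loss of generality, that $[G_2:A] \geq 3$.

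The conjugation analysis then proceeds directly from the reduced-word description of Lemma \ref{words}. If $g = x_1 \cdots x_n x_0 \in \Gamma_{1,1}$, then for any $v \in T_2^*$ (nonempty by nondegeneracy), straightforward reduction of $x_0 v^{-1} \in G_2^*$ to some $s' a'$ with $s' \in T_2^*$, $a' \in A$ shows that $v g v^{-1}$ has reduced form $v x_1 \cdots x_n s' a'$ of type $(2, 1, \ldots, 1, 2)$, hence lies in $\Gamma_{2,2}$; symmetrically any $u \in T_1^*$ sends $\Gamma_{2,2}$ into $\Gamma_{1,1}$. If instead $g = x_1 \cdots x_n x_0 \in \Gamma_{1,2}$, I would pick $v \in G_2^*$ outside the single left coset $A x_n$, which exists because $[G_2:A] \geq 3$; then $x_n x_0 v^{-1} \notin A$, and reducing $v g v^{-1}$ yields a word of type $(2, 1, \ldots, 1, 2) \in \Sigma_{2,2}$. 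Combining with the previous step, $g$ then admits conjugates in both $\Gamma_{1,1}$ and $\Gamma_{2,2}$; the case $g \in \Gamma_{2,1}$ is symmetric.

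For the trivial normal centralizer property, suppose $H_1, H_2$ are nontrivial commuting normal subgroups. I would pick $g_i \in H_i \setminus \{1\}$, and by normality combined with the first part produce $h_1 \in H_1 \cap \Gamma_{1,1}$ and $h_2 \in H_2 \cap \Gamma_{2,2}$. Since $H_1, H_2$ commute we have $h_1 h_2 = h_2 h_1$, but the reduced forms concatenate without reduction at the interface (the last letter of $h_1$ has type $1$ and the first of $h_2$ has type $2$), so $h_1 h_2 \in \Gamma_{1,2}$ while $h_2 h_1 \in \Gamma_{2,1}$, contradicting the disjointness of those sets. The main obstacle is the index dichotomy: without the nondihedral hypothesis, the situation $[G_1:A] = [G_2:A] = 2$ genuinely blocks the endpoint-shifting trick, since on each side there is only one nontrivial coset, leaving no room to choose a conjugator outside the single forbidden coset $A x_n$.
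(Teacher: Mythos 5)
Your proposal is correct and follows essentially the same route as the paper: the same dichotomy ($[G_1:A]=[G_2:A]=2$ forces the dihedral case via triviality of the core), the same endpoint-shifting conjugation using $[G_2:A]\geq 3$ to avoid a single coset, the same reduction of $g\in A\setminus\{1\}$ via $\Core_\Gamma(A)=\{1\}$, and the same final argument $g_1g_2\in\Gamma_{1,2}$ versus $g_2g_1\in\Gamma_{2,1}$ (the paper treats $\Gamma_{2,1}$ directly and gets $\Gamma_{1,2}$ by inverting, which is only a cosmetic difference from your treatment). One immaterial slip: the coset to avoid in the $\Gamma_{1,2}$ case is $Ax_nx_0$ rather than $Ax_n$, but since you only need to dodge one coset among at least two available, nothing changes.
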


 \begin{proof}

  First it should be noted that $A$ cannot be 
 simultaneously a subgroup of index 2 in $G_1$ and in $G_2$.
 Otherwise the core hypothesis  implies that 
 $A=\{1\}$
 and $\Gamma$ would be the dihedral group.
  Hence we can assume that $G_2/A$ contains
 at least $3$ elements. 
 
 Next it is clear that
 $G^*_i.\Gamma_{j,k}\subset \Gamma_{i,k}$
 and $\Gamma_{k,j}.G^*_i\subset \Gamma_{k,i}$
  whenever $i\neq j$. 
 
\smallskip\noindent 
 {\it Proof  that  the conjugacy class of any $g\neq 1$ intersects both $\Gamma_{1,1}$ and $\Gamma_{2,2}$.} Let  $\gamma_1\in G_1^*$ and $\gamma_2\in G_2^*$.
 We have 
 $\Gamma_{2,2}^{\gamma_1}\subset\Gamma_{1,1}$ and
 $\Gamma_{1,1}^{\gamma_2}\subset\Gamma_{2,2}$. Therefore the claim is proved for
 any $g\in \Gamma_{1,1}\cup \Gamma_{2,2}$. Moreover
 it is now enough to prove that the conjugacy class of any
 $g\neq 1$ intersects $\Gamma_{2,2}$. 
 
 Assume now $g\in \Gamma_{2,1}$. We have $g=u.v$ for some
 $u\in G^*_2$ and $v\in\Gamma_{1,1}$. Since 
 $[G_2:A]\geq 3$, there is $\gamma\in G^*_2$ such that
 $\gamma.u\notin A$. It follows that
 $\gamma.g$ belongs to $\Gamma_{2,1}$, and therefore
 $g^\gamma$ belongs to $\Gamma_{2,2}$. 
 
 For $g\in \Gamma_{1,2}$, the claim follows from the fact that $g^{-1}$ belongs to $\Gamma_{2,1}$.
 
 Last, let $g\in A\setminus\{1\}$. Since
 $\Core_A(\Gamma)$ is trivial,  there is $\gamma\in \Gamma$ such that $g^\gamma$ is not in $A$. Thus $g^\gamma$ belongs to 
 $\Gamma_{i,j}$  for some
 $i,\,j$. So $g$ is conjugate to some element in
 $\Gamma_{2,2}$ by the previous considerations.
 
\smallskip\noindent 
 {\it Proof  
that $\Gamma$ has the trivial normal centralizers property.}
 Let $H_1,\,H_2$ be nontrivial normal subgroups. By the previous point, there are elements $g_1,\,g_2$ with
 
 \centerline{$g_1\in H_1\cap \Gamma_{1,1}$ and
 $g_2\in H_2\cap \Gamma_{2,2}$.}
 
\noindent Since we have $g_1 g_2\in \Gamma_{1,2}$ and $g_2 g_1\in \Gamma_{2,1}$, it follows that $g_1g_2\neq g_2g_1$. Therefore
$H_1$ and $H_2$ do not commute.
\end{proof}

\begin{Cor}\label{criterion1} Let $\Gamma=G_1*_A\,G_2$ be a nondegenerate amalgamated product such that
$\Core_\Gamma(A)$ is trivial
\footnote{As it has been noticed by the referee, this is equivalent to the faithfulness of the action of $\Gamma$ on the associated  Bass-Serre tree.}.

If $\Gamma$ is linear over a ring, then $\Gamma$ is linear over a field.
\end{Cor}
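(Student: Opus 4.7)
The plan is to split into two cases based on whether $\Gamma$ is dihedral, and to combine Lemmas \ref{conj1} and \ref{linearity}.

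First I would treat the nondihedral case, which is the main one. Since $\Gamma=G_1*_A G_2$ is nondegenerate, nondihedral, and $\Core_\Gamma(A)$ is trivial, Lemma \ref{conj1} directly gives that $\Gamma$ has the trivial normal centralizers property. Combined with the hypothesis that $\Gamma$ is linear over a ring, Lemma \ref{linearity} immediately yields that $\Gamma$ is linear over a field. This is essentially a one-line deduction stringing together the two previously established lemmas.

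Next I would dispose of the dihedral case separately, since Lemma \ref{conj1} explicitly excludes it. In this case $G_1=G_2=\Z/2\Z$ and $A=\{1\}$, so $\Gamma$ is the infinite dihedral group $D_\infty$. But $D_\infty$ embeds into $\GL(2,\Z)\subset\GL(2,\Q)$ via its standard action on $\R$ by reflections $x\mapsto -x$ and $x\mapsto 1-x$, so it is linear over the field $\Q$ regardless of any hypothesis. Hence the conclusion holds trivially in the dihedral case.

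I do not expect any serious obstacle here: all the heavy lifting was done in Lemmas \ref{conj1} and \ref{linearity}. The only subtlety is remembering to treat the dihedral case by hand, since it is the one configuration where the conjugation argument in Lemma \ref{conj1} breaks down (there one has $[G_2:A]=2$, so the key step producing $\gamma\in G_2^*$ with $\gamma u\notin A$ fails). Fortunately, the exceptional group is concrete and manifestly linear over $\Q$.
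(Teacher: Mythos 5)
Your proposal is correct and follows exactly the paper's own argument: the paper likewise reduces to the nondihedral case (noting that the infinite dihedral group is linear over a field) and then cites Lemmas \ref{conj1} and \ref{linearity}. Your explicit embedding of $D_\infty$ into $\GL(2,\Q)$ via affine reflections is a fine way to justify the dihedral case, which the paper simply asserts.
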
 

\begin{proof} Since the infinite dihedral group is linear over a field, we will assume that the amalgamated product 
$\Gamma=G_1*_A\,G_2$ is also nondihedral. Thus
the result is an obvious corollary of Lemmas \ref{linearity} and \ref{conj1}.
 \end{proof}

 \bigskip\noindent
{\it 3.5 Mixed products with trivial core}

\noindent Let $S$ be a group, and 
let $S\ltimes *_{p\in P} E_p$ be a mixed product of $S$.

Let $\Sigma$ be the set of all finite  sequences
 ${\boldsymbol\pi}=(p_1,\dots,p_m)$ of elements of $P$
 with $p_i\neq p_{i+1}$, for any $i<m$.
 Set $\Gamma_1=*_{p\in P} E_p$ and
 $E^*_p=E_p\setminus\{1\}$.
 Any element $u\in \Gamma_1\setminus\{1\}$ is uniquely written
 as $u=u_1\dots u_m$, where $u_i\in E^*_{p_i}$
for some $m\geq 1$ and some sequence 
${\boldsymbol\pi}=(p_1,\dots,p_m)\in\Sigma$. The decomposition $u=u_1\dots u_m$ is called the
{\it reduced decomposition} of $u$, 
${\boldsymbol \pi}$ is called its {\it type}
and $m$ is called its {\it length}. 
 For $p,p'\in P$, let 
$E_{p,p'}$
be the set of all elements 
$u\in \Gamma_1\setminus\{1\}$ 
whose type is a sequence ${\boldsymbol\pi}$ starting with 
$p$ and ending with $p'$.

 By definition, the free product 
 $ *_{p\in P} E_p$, 
or, by extension, the mixed product $S\ltimes *_{p\in P} E_p$,  is called {\it nondegenerate} if $\Card\,P\geq 2$ and 
 $E_p\neq \{1\}$ for any $p\in P$. 
For a nondegenerate mixed product 
$S\ltimes *_{p\in P} E_p$,  
we have  

\centerline{$\Core_\Gamma(S)=
\Core_\Gamma(\cap_P\, S_p)$.}

 The mixed product 
$S\ltimes *_{p\in P} E_p$ is called 
{\it dihedral} if $\Card\,P=2$, 
if $E_p=\Z/2\Z$ for any $p\in P$ and if

\centerline{$S\simeq \Z/2Z$ permutes the two factors,
or $S=\{1\}$.}

\noindent It is called {\it nondihedral} otherwise.

 \begin{lemma}\label{conj2}
 Let $\Gamma=S\ltimes *_{p\in P} E_p$ be a nondegenerate and nondihedral mixed product such that 
 $\Core_\Gamma(\cap_P\,S_p)=\{1\}$. Let $p\in P$.
 
\noindent (i)  For any element $\gamma\in \Gamma_1\setminus\{1\}$,
 there is $v\in \Gamma_1$ such that $\gamma^{v}$
 belongs to $E_{p,p}$.

 \noindent (ii) For any element $\gamma\in \Gamma\setminus\Gamma_1$, there is $v\in \Gamma_1$ such that $(\gamma,v)\neq 1$.

In particular $\Gamma$ has the trivial normal centralizers property.
 \end{lemma}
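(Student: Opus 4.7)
The plan is to prove (i) by a case analysis on the reduced type of $\gamma\in\Gamma_1\setminus\{1\}$, derive (ii) from standard facts on automorphisms of free products, and then combine them to deduce the trivial normal centralizers property. For (i), let $\gamma=u_1\cdots u_m$ be the reduced decomposition, of type $(p_1,\ldots,p_m)$. If $\gamma\in E_{p,p}$ take $v=1$; if $p_1,p_m\neq p$, then any $v\in E_p^*$ works, since $v^{-1}\gamma v$ has reduced type $(p,p_1,\ldots,p_m,p)$. The delicate case has exactly one endpoint equal to $p$; by symmetry assume $p_1=p\neq p_m$. If $|E_p|\geq 3$, pick $v\in E_p^*\setminus\{u_1\}$ so that $v^{-1}u_1$ lies in $E_p^*$, giving $v^{-1}\gamma v$ of type $(p,p_2,\ldots,p_m,p)$. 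If $|E_p|=2$, first conjugate $\gamma$ by some $w\in E_r^*$ into $E_{r,r}$ for a suitable $r\neq p$, and then reapply the easy case: when $|P|\geq 3$ choose $r\notin\{p,p_m\}$; when $|P|=2$ take $r=p_m$, which works provided $|E_{p_m}|\geq 3$ so that one can pick $w\neq u_m^{-1}$ and ensure $u_m w\neq 1$.

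The residual configuration $|P|=2$, $|E_p|=|E_{p_m}|=2$ must be ruled out by the hypotheses. Since $\mathrm{Aut}(\Z/2\Z)$ is trivial, the subgroup $\bigcap_{p\in P}S_p$ (the kernel of the $S$-action on $P$) acts trivially on each $E_p$ and hence centralizes $\Gamma_1$; being moreover normal in $S$, it is normal in $\Gamma$, and the core hypothesis forces $\bigcap_{p\in P}S_p=\{1\}$. Then $S$ embeds into $\mathrm{Sym}(P)\simeq\Z/2\Z$, placing $\Gamma$ in the excluded dihedral case.

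For (ii), suppose for contradiction that $\gamma\in\Gamma\setminus\Gamma_1$ centralizes all of $\Gamma_1$ and write $\gamma=s\delta$ with $s\in S\setminus\{1\}$, $\delta\in\Gamma_1$. The identity $\gamma v=v\gamma$ for every $v\in\Gamma_1$ forces the $S$-automorphism $\sigma_s$ of $\Gamma_1$, which sends each factor $E_p$ to $E_{s.p}$, to coincide with an inner automorphism of $\Gamma_1$. Using the standard facts that distinct factors in the free product $*_{p\in P}E_p$ are not $\Gamma_1$-conjugate and that the normalizer of each $E_p$ in $\Gamma_1$ is $E_p$ itself, we deduce $s.p=p$ for every $p\in P$ and $\delta\in\bigcap_{p\in P}E_p=\{1\}$. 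Hence $s$ lies in $\bigcap_P S_p$ and centralizes $\Gamma_1$; its normal closure in $\Gamma$ remains in $\bigcap_P S_p$ (which is normal in $S$) and still centralizes $\Gamma_1$, contradicting $\Core_\Gamma(\bigcap_P S_p)=\{1\}$.

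For the last assertion, let $H_1,H_2$ be commuting nontrivial normal subgroups. Each $H_i\cap\Gamma_1$ is nontrivial: otherwise a nontrivial $h\in H_i\subset\Gamma\setminus\Gamma_1$ would, by (ii), admit $v\in\Gamma_1$ with $(h,v)\neq 1$, while $(h,v)$ lies in $H_i\cap\Gamma_1$ by joint normality of $H_i$ and $\Gamma_1$, a contradiction. Fix $p\neq q$ in $P$ and use (i) to conjugate nontrivial elements of $H_1\cap\Gamma_1$ and $H_2\cap\Gamma_1$ into $h_1'\in H_1\cap E_{p,p}$ and $h_2'\in H_2\cap E_{q,q}$; since $p\neq q$, no cancellation occurs in either juxtaposition, so $h_1'h_2'$ has reduced type beginning with $p$ and ending with $q$ while $h_2'h_1'$ begins with $q$ and ends with $p$, making them distinct. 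Hence $H_1$ and $H_2$ cannot commute; the main obstacle throughout is the low-cardinality boundary of (i), where only the combined strength of the nondihedral hypothesis and the triviality of $\Core_\Gamma(\bigcap_P S_p)$ rescues the construction.
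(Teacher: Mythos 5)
Your proof is correct, and it departs from the paper's in two places worth recording. For assertion (i), the paper conjugates through a third index when $\Card P\geq 3$ and, when $\Card P=2$, simply asserts that $\Gamma_1$ is not infinite dihedral before invoking Lemma \ref{conj1}; your endpoint case analysis arrives at the same boundary configuration ($\Card P=2$ with both factors of order two) and, unlike the paper, actually justifies its exclusion: since $\Aut(\Z/2\Z)$ is trivial, $\cap_P S_p$ centralizes $\Gamma_1$, hence is normal in $\Gamma$ and therefore trivial by the core hypothesis, so $S$ embeds in $\mathrm{Sym}(P)$ and the product is dihedral. That is a genuine gain in completeness over the paper's one-line assertion. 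For assertion (ii) the two arguments are really different: the paper writes $\gamma=su$ with $u$ of length $m$, takes any $v\in E_{p'}^*$ with $p'\neq p_m$, and observes that $v^\gamma=(uvu^{-1})^s$ has reduced length $2m+1>1$ and so cannot equal $v$ (the case $u=1$ being handled by the triviality of $\Core_\Gamma(S)$); you instead argue by contradiction, showing that if $\gamma=s\delta$ centralized $\Gamma_1$ then the automorphism $\sigma_s$ would be inner, and then invoke the non-conjugacy of distinct free factors and the self-normalization $N_{\Gamma_1}(E_p)=E_p$ to force $s\in\cap_P S_p$ and $\delta=1$, contradicting the core hypothesis. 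Your route is sound, but it imports two structural facts about free products that you do not prove (they are standard and follow from the same normal-form considerations you use elsewhere, cf. \cite{MKS} or \cite{S83}), whereas the paper's length computation is self-contained and shorter. The deduction of the trivial normal centralizers property is the same in both texts, and your observation that $(h,v)\in H_i\cap\Gamma_1$ by joint normality is exactly the step the paper leaves implicit; producing $v$ with $\gamma^{v^{-1}}\in E_{p,p}$ rather than $\gamma^{v}$ is of course immaterial.
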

 
 \begin{proof} 
 
 \smallskip\noindent 
 {\it Proof of Assertion(i).}
For $\Card P=2$, the group $\Gamma_1$ is not the infinite dihedral group $\Z/2\Z*\Z/2\Z$ and 
 the assertion follows from Lemma \ref{conj1}. Therefore, we will assume that $\Card P\geq 3$.
 
 The element $\gamma$ belongs to $E_{p_1,p_2}$ for some $p_1,\,p_2\in P$. Let 
 $p_3\in P\setminus\{p_1,\,p_2\}$
 and let $v\in E_{p,p_3}$.
Thus the element  $\gamma^v$ belongs to 
$E_{p,p}$.

  \smallskip\noindent 
 {\it Proof of Assertion (ii).}
 Let $\gamma=su$, where $s\in S\setminus\{1\}$ and $u\in \Gamma_1$. 
 
 Obviously $\Gamma/S$ and $\Gamma_1$ are isomorphic $S$-sets. Since
 $\Core_\Gamma(S)$ is trivial,
 there is $t\in \Gamma_1$ such that 
 $(s,t)\neq 1$.
Thus, we can assume that $u\neq 1$. 

Let $u=u_1\dots u_m$ be its reduced decomposition, let $(p_1,\dots,p_m)$ be its type. Let $v\in E_{p'}^*$ with $p'\neq p_m$.
 By definition, $u_1\dots u_m.v.u_m^{-1}\dots u_1^{-1}$
is a reduced decomposition of the element 
$w:=uvu^{-1}$. Hence $w$ and  $w^s$ have lenght $2m+1>1$. Thus 
we have $v^\gamma=w^s\neq v$, or equivalently 
$(\gamma,v)\neq 1$.

\smallskip\noindent 
{\it Proof that $\Gamma$ has the trivial normal centralizers property.} Let $H,\,H'$ be two nontrivial normal subgroups
of $\Gamma$. Let $p\neq p'$ be elements of $P$.
By  Assertions (i) and (ii), there are elements $g, \,g'$ with
 
 \centerline{$g\in H\cap E_{p,p}$ and
$g'\in H'\cap E_{p',p'}$.}
 
\noindent Since we have $gg'\in E_{p,p'}$ and 
$g'g\in E_{p',p}$, it follows that $gg'\neq g'g$. Therefore $H$ and $H'$ do not commute.
\end{proof}

\begin{Cor}\label{criterion2} Let $\Gamma=S\ltimes *_{p\in P} E_p$ be a nondegenerate mixed product such that 
 $\Core_\Gamma(\cap_P\,S_p)=\{1\}$.

If $\Gamma$ is linear over a ring, then $\Gamma$ is linear over a field.
\end{Cor}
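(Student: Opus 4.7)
The plan is to mirror the proof of Corollary \ref{criterion1}, reducing the statement to a direct combination of Lemmas \ref{conj2} and \ref{linearity}. Since Lemma \ref{conj2} requires the mixed product to be nondihedral, the first step is to dispose of the dihedral case by hand.

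For the dihedral case, $\Card\,P = 2$, $E_p = \Z/2\Z$ for each $p \in P$, and $S$ is either trivial or a $\Z/2\Z$ permuting the two factors. In the first subcase, $\Gamma$ is the infinite dihedral group, which is linear over $\Z$. In the second subcase, writing $a,\,b$ for the generators of the two $\Z/2\Z$ factors and $s$ for the generator of $S$, the element $t := ab$ generates an infinite cyclic subgroup that is inverted by both $a$ and $s$, hence is normal in $\Gamma$; the quotient $\Gamma/\langle t \rangle$ is the Klein four-group, so $\Gamma$ is virtually cyclic and again linear over $\Z$. In either subcase, $\Gamma$ is already linear over a field, and the conclusion holds trivially.

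In the remaining nondihedral case, Lemma \ref{conj2} applies directly and shows that $\Gamma$ possesses the trivial normal centralizers property. Lemma \ref{linearity} then upgrades any embedding $\Gamma \subset \GL(n,R)$, over any commutative ring $R$, to an embedding over the fraction field of some prime quotient of $R$, which is precisely linearity over a field.

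The main obstacle I foresee is only the verification of the dihedral case, which amounts to a short computation in the virtually cyclic quotient; the rest of the argument is a formal consequence of the work done in Sections 2 and 3, and no new ingredient is needed for the assembly.
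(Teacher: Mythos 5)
Your proof is correct and follows essentially the same route as the paper: dispose of the dihedral case separately, then combine Lemma \ref{conj2} (trivial normal centralizers) with Lemma \ref{linearity}. Your treatment of the dihedral case is in fact slightly more careful than the paper's one-line dismissal, since when $S\simeq\Z/2\Z$ permutes the two factors the group is a degree-two extension of the infinite dihedral group rather than the infinite dihedral group itself, and your virtually cyclic argument handles this correctly.
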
 

\begin{proof} Since the infinite dihedral group is linear over a field, we can assume that the mixed product 
$\Gamma=S\ltimes *_{p\in P} E_p$ is also nondihedral. 
Then the result is an obvious corollary of Lemmas \ref{linearity} and \ref{conj2}.
 \end{proof}

\section{A Nonlinear f.g. Subgroup of $\Aut_0\,\na_\Q^2$}

Let $\Gamma$ be the group with presentation

\centerline{
$\left\langle \sigma,\tau \mid  
\sigma^2 \tau \sigma^{-2}=\tau^2
\right\rangle$.}

\noindent
In \cite{DS}, C. Drutu and M. Sapir showed that
$\Gamma$ is not linear over a 
field\footnote{ In the first version of this paper that appeared in the arXiv, I was  unaware of \cite{DS}. I'm grateful to T. Delzant for providing this reference.}.
We show that $\Gamma$ is not linear either over a ring, and
that $\Gamma$ is isomorphic to 
an explicit subgroup of $\Aut_0\,\na_\Q^2$, which proves Theorem A.2.

\bigskip
\noindent {\it 4.1 The amalgamated decomposition 
$\Gamma=G_1*_A G_2$}

\noindent Let us consider the following subgroups of $\Gamma$

\centerline{$G_1=\langle\sigma\rangle$,
$G_2=\langle\sigma^2,\tau\rangle$ and 
$A=\langle\sigma^2\rangle$.}

\noindent The groups $G_2$
is isomorphic to $\Z\ltimes\Z[1/2]$ where
any $n\in\Z$ acts over $\Z[1/2]$ by multiplication by $2^n$. The group $\Gamma$  is the amalgamated product

\centerline{ $\Gamma\simeq G_1*_A G_2$.}

\begin{lemma}\label{prepa1} The group
$\Gamma$ has the trivial normal centralizers property.
\end{lemma}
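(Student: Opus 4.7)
The plan is to apply Lemma~\ref{conj1} to the amalgamated decomposition $\Gamma=G_1*_A G_2$ just introduced. Three hypotheses must be verified: the product is nondegenerate, nondihedral, and has trivial core $\Core_\Gamma(A)$. Nondegeneracy is immediate, since $\sigma\in G_1\setminus A$ and $\tau\in G_2\setminus A$. Nondihedrality is equally clear, because $G_1=\langle\sigma\rangle\simeq\Z$ is not $\Z/2\Z$.

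The substantive step is to establish $\Core_\Gamma(A)=\{1\}$. Since $A\subset G_2$, any subgroup of $A$ which is normal in $\Gamma$ is, a fortiori, normal in $G_2$, so it suffices to show $\Core_{G_2}(A)=\{1\}$. Under the identification $G_2\simeq\Z\ltimes\Z[1/2]$ recalled in 4.1, in which $\sigma^2$ generates the $\Z$ factor acting on $\Z[1/2]$ by multiplication by $2$, the subgroup $A$ is precisely this $\Z$ factor. A direct computation using the defining relation (rewritten as $\sigma^2\tau^{-1}\sigma^{-2}=\tau^{-2}$) gives
$$\tau\sigma^2\tau^{-1}=\tau(\sigma^2\tau^{-1}\sigma^{-2})\sigma^2=\tau\cdot\tau^{-2}\cdot\sigma^2=\tau^{-1}\sigma^2,$$
which does not lie in $A$, and a short induction then yields $\tau\sigma^{2n}\tau^{-1}=\tau^{1-2^n}\sigma^{2n}$, an element of $A$ only for $n=0$. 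Hence no nontrivial power of $\sigma^2$ belongs to $\Core_{G_2}(A)$, and this core is trivial.

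With the three hypotheses in hand, Lemma~\ref{conj1} delivers the trivial normal centralizers property for $\Gamma$ at once. The only real obstacle is the core computation, and it is essentially a routine unwinding of the Baumslag--Solitar relation that presents $G_2$; the normal form in $\Z\ltimes\Z[1/2]$ guarantees uniqueness of the reduced expressions used in the induction, so no hidden collapses can occur.
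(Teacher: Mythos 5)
Your proof is correct and follows the same route as the paper: both reduce the claim to $\Core_{G_2}(A)=\{1\}$ and then invoke Lemma~\ref{conj1}. The only cosmetic difference is that the paper deduces triviality of the core from the faithfulness of the $A$-action on $G_2/A\simeq\Z[1/2]$ (multiplication by powers of $2$), whereas you verify the same fact by the explicit conjugation $\tau\sigma^{2n}\tau^{-1}=\tau^{1-2^n}\sigma^{2n}\notin A$ for $n\neq 0$.
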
 

\begin{proof} Set $H=\Z[1/2].\tau$.
The $A$-sets $G_2/A$ and $H$ are isomorphic,
hence $A$ acts faithfully on $G_2/A$. Therefore 
$\Core_{\Gamma}(A)\subset \Core_{G_2}(A)$ is trivial, and the assertion follows from Lemma \ref{conj1}.
\end{proof}

\bigskip
\noindent {\it 4.2 Quasi-unipotent endomorphisms}

\noindent Let $V$ be a finite-dimensional vector space over an algebraically closed field $K$. An element $u\in \GL(V)$ is called
{\it quasi-unipotent} if all its eigenvalues are roots of unity.
The {\it quasi-order} of a quasi-unipotent endomorphism
$u$ is the smallest positiver integer $m$ such that 
$u^m$ is unipotent.

If $u$ is unipotent and $\ch\,K=0$, set

\centerline
{$\log\,u=\log(1-(1-u)):=\sum_{k\geq 1} \,(1-u)^k/k$,}

\noindent which is well-defined since $1-u$ is nilpotent.

\begin{lemma}\label{unipotent} Let $h, u\in \GL(V)$.
Assume that $u$ has infinite order and
$h u h^{-1}=u^{2}$.
Then  $u$ is quasi-unipotent of  quasi-order $m$
for some odd integer $m$.
Moreover $K$  has characteristic zero, and

\centerline{$h e h^{-1}=2 e$,}

\noindent where
$e:=\log\,u^m$.
\end{lemma}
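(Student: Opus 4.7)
The plan is to first extract spectral information from the relation $huh^{-1}=u^2$, then rule out positive characteristic, and finally translate the identity $hu^m h^{-1}=u^{2m}$ through the $\exp/\log$ correspondence.

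First I would note that since $u$ and $u^2$ are conjugate, they have the same multiset of eigenvalues. Hence the set $\Lambda\subset K^*$ of eigenvalues of $u$ is stable under squaring, and since squaring is then a bijection of the finite set $\Lambda$, for each $\lambda\in\Lambda$ there exists $k\geq 1$ with $\lambda^{2^k}=\lambda$, i.e.\ $\lambda^{2^k-1}=1$. Thus every eigenvalue of $u$ is a root of unity whose order divides the odd number $2^k-1$, so is odd. Writing $u=u_s u_n$ for the Jordan decomposition with $u_s$ semisimple and $u_n$ unipotent (commuting), the order of $u_s$ is the least common multiple of the orders of the eigenvalues, hence odd. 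This order is exactly the quasi-order $m$ of $u$, since $u^n=u_s^n u_n^n$ is unipotent iff $u_s^n=1$. So $u$ is quasi-unipotent of odd quasi-order $m$.

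Next I would rule out positive characteristic. If $\ch K=p>0$, then $u^m$ is unipotent, so $u^m-1$ is nilpotent, and for $N$ a power of $p$ larger than $\dim V$ one has $(u^m)^N=(1+(u^m-1))^N=1+(u^m-1)^N=1$. Thus $u$ would have finite order, contradicting the hypothesis. Therefore $\ch K=0$, and $e:=\log\,u^m$ is a well-defined nilpotent endomorphism with $\exp(e)=u^m$.

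Finally, iterating $huh^{-1}=u^2$ gives $hu^m h^{-1}=u^{2m}$. Since conjugation is a ring homomorphism, it commutes with the formal power series defining $\exp$, so
\[
h\,\exp(e)\,h^{-1}=\exp(heh^{-1}).
\]
On the other hand $u^{2m}=(u^m)^2=\exp(e)^2=\exp(2e)$, using that $e$ commutes with itself. Hence $\exp(heh^{-1})=\exp(2e)$. Both $heh^{-1}$ and $2e$ are nilpotent, and since $\exp$ restricts to a bijection between nilpotent and unipotent endomorphisms of $V$ (with inverse $\log$), we conclude $heh^{-1}=2e$, as required.

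The main conceptual step is the first one (eigenvalues must be odd-order roots of unity); the rest is a mechanical transfer via the $\exp/\log$ bijection, with the minor observation about unipotent elements in characteristic $p$ giving the characteristic-zero conclusion essentially for free.
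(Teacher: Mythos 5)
Your proof is correct and follows essentially the same route as the paper's: bijectivity of squaring on the spectrum forces all eigenvalues to be odd-order roots of unity, finite order of unipotents rules out positive characteristic, and the conjugation-equivariance of $\exp/\log$ turns $hu^mh^{-1}=u^{2m}$ into $heh^{-1}=2e$. The extra detail you supply (the Jordan decomposition identifying the quasi-order, and the explicit $p$-power computation) only fleshes out steps the paper leaves implicit.
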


\begin{proof} Let $\Spec\,u$ be the spectrum of $u$.
By hypothesis the  map $\lambda\mapsto\lambda^2$
is bijective on $\Spec\,u$.
Hence  all 
eigenvalues of $u$ are odd roots of unity, what proves that $u$ is
quasi-unipotent of odd quasi-order $m$.

Over any field of finite characteristic, the unipotent
endomorphisms have finite order. Hence we have $\ch\,K=0$. Moreover, we have

\hskip1cm
$h e h^{-1}=h (\log u^m) h^{-1}=
\log(h u^m h^{-1})=\log(u^{2m})=2e$.
\end{proof}

\noindent{\it 4.3 Nonlinearity of $\Gamma$}

\begin{DSLemma}\label{DS} The group $\Gamma$ is not linear over a field.
\end{DSLemma}

The result is a particular case of Corollary 4 in
\cite{DS}. Since their proof  is based on an earlier result of \cite{We}, we shall provide a direct proof.

\begin{proof} Assume otherwise and
let $\rho':\Gamma\to \GL(V)$ be an embedding, where $V$ is a finite-dimensional vector space over an algebraically closed field $K$.
Since $\tau$ has infinite order and $\sigma^2 \tau \sigma^{-2}=\tau^2$, it follows from 
Lemma \ref{unipotent} that $K$ has characteristic zero,  $\rho'(\tau)$ is quasi-unipotent of
odd quasi-order $m$.  

\smallskip\noindent
{\it Step 1: there is another embedding 
$\rho:\Gamma\to \GL(V)$ such that 
$\rho(\tau)$ is unipotent.}
Let   
$\psi: \Gamma\to\Gamma$ be the 
group homomorphism 
defined by 
$\psi(\sigma)=\sigma$,
and $\psi(\tau)=\tau^m$.
Since $\psi(G_1)\cap \psi(G_2)=A$, it follows from
Lemma \ref{subamal} that the natural
homomorphism $\psi(G_1)*_A \psi(G_2)\to\Gamma$ is injective. Hence
$\psi$ is injective and 
$\rho:=\rho'\circ\psi$ is an embedding
such that $\rho(\tau)=\rho'(\tau)^m$ is unipotent.

\smallskip\noindent
{\it Step 2: the unipotent subgroup 
$U\subset \GL(V)$.}
Set $h=\rho(\sigma^2)$,  let $\Pi=\Spec\,h$ be its spectrum, and for each $\lambda\in \Pi$, let 
$V_{(\lambda)}$ be the corresponding generalized eigenspace. For any $k\geq 0$, set

\centerline{
$\Pi_\geq k=
\{\lambda \in \Pi\mid
\lambda\in 2^l \,\Pi$ for some
$l\geq k\}$,}

\noindent
The filtration $\Pi=\Pi_{\geq 0}\supset
\Pi_{\geq 1}\supset \dots$ of the set $\Pi$ induces
a filtration of $V$

\centerline{$V=V_{\geq 0}
\supset V_{\geq 1}\supset\dots$,}

\noindent where
$V_{\geq k}=\oplus_{\lambda\in\Pi_{\geq k}}V_{(\lambda)}$.  Let $U$ be the
group of all $g\in \GL(V)$ such that
$(g-\id) V_{\geq k}\subset V_{\geq k+1}$ for all $k\geq 0$. For some suitable basis,  $U$ is a group of upper triangular matrices. Therefore $U$ is nilpotent.

\smallskip\noindent
{\it Step 3: $\rho(\Gamma)$ is nilpotent by commutative.} Since $\rho(G_1)$ commutes with $h$,
we have $\rho(G_1).V_{\geq k}=V_{\geq k}$ for any integer $k$.
Therefore $\rho(G_1)$ normalizes $U$.

Set $u=\rho(\tau)$ and 
$e=\log\,u$. By Lemma \ref{unipotent}, we have 
$h e h^{-1}=2e$ and therefore we have 
$e.V_{\geq k}\subset V_{\geq k+1}$. It follows that 
$u=\exp\,e$ belongs to $U$. Since 
$\rho(\Gamma)=\langle \rho(G_1), u \rangle$
 we have

\centerline{
$\rho(\Gamma)\subset \rho(G_1)\ltimes U$,}

\noindent and therefore $\rho(\Gamma)$ is nilpotent by commutative. Hence $\rho(\Gamma)$  contains a nontrivial
normal abelian subgroup.
This contradicts  Lemma \ref{prepa1}, which states that 
$\Gamma$ has the trivial normal
centralizers property. 
 \end{proof}

\begin{lemma} \label{nonlinear}

The group $\Gamma$ is not linear, even over a ring.
\end{lemma}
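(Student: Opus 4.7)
The plan is to deduce this immediately by combining the Drutu–Sapir Lemma with the general ``linear over a ring $\Rightarrow$ linear over a field'' principle already established in Section 3. Specifically, I would argue by contraposition: assuming $\Gamma$ is linear over some commutative ring, I want to upgrade this to linearity over a field, which would contradict the Drutu–Sapir Lemma.

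The key input is Lemma~\ref{prepa1}, which shows that $\Gamma$ has the trivial normal centralizers property. With this in hand, Lemma~\ref{linearity} applies directly: any group with the trivial normal centralizers property that is linear over a ring is automatically linear over a field. (Alternatively, one can invoke Corollary~\ref{criterion1} using the amalgamated decomposition $\Gamma = G_1 *_A G_2$: it is nondegenerate since $\sigma \notin A$ and $\tau \notin A$, and the triviality of $\Core_\Gamma(A)$ is precisely what was verified in the proof of Lemma~\ref{prepa1}.)

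Thus if $\Gamma$ were linear over a ring, it would be linear over a field, contradicting the Drutu–Sapir Lemma. There is essentially no obstacle here; all the work has been done upstream, in verifying the trivial normal centralizers property for $\Gamma$ and in establishing the general criterion of Section~3. The proof is therefore a one-line combination of Lemmas~\ref{prepa1}, \ref{linearity}, and the Drutu–Sapir Lemma.
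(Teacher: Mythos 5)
Your proposal is correct and coincides with the paper's own proof, which likewise combines Lemma~\ref{prepa1}, Lemma~\ref{linearity}, and the Drutu--Sapir Lemma in exactly this way. Nothing further is needed.
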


\begin{proof} 
By Lemma \ref{prepa1} the group $\Gamma$
has the trivial normal centralizers property. It follows
from  Lemmas \ref{linearity} and \ref{DS} that
$\Gamma$ is not linear, even over a ring.
\end{proof}

\noindent{\it 4.4 Proof of Theorem C.1}

\begin{MainC1} The subgroup $\langle S,T\rangle$
of $\Aut_0\,\na_\Q^2$ is not linear, even
over a ring, where 

\centerline{$S(x, y) = (y, 2x)$ and $T (x, y) = (x, y + x^2 )$.}
\end{MainC1}

\begin{proof} Set $H_1=\langle S\rangle$, $H_2=\langle S^2,T\rangle$, $C=\langle S^2\rangle$.

We have $S^2=2.\id$, therefore  we have
 $H_1\cap B_{\Aff}(K)=C$. Moreover $H_2$ is the group of automorphisms of the form
 
\centerline{ $(x,y)\mapsto (2^k x, 2^k y+rx^2)$,}

\noindent for $k\in\Z$ and  $r\in \Z[1/2]$, therefore
 $H_2\cap B_{\Aff}(K)=C$. It follows from Lemma \ref{subamal}
 and  van der Kulk's Theorem that the natural homomorphism
 $H_1*_C H_2\to\Aut_0\,\na_\Q^2$ is injective. 
 
 There is a
 group isomorphism $\Gamma\to H_1*_C H_2$ sending $\sigma$ to $S^{-1}$ and $\tau$ to $T$. 
 Thus, by Lemma \ref{nonlinear}, the subgroup
 of $\Aut_0\,\na_K^2$ generated by $S$ and $T$ is not linear, even over a ring.
\end{proof}

 \section{The Linear Representation of $\Aut_{1}\,\na_K^2$}
 
\noindent We will prove Theorem A.2, in a way 
which is useful for Section 9.

\bigskip\noindent
{\it 5.1 Nagao's Theorem} 

\noindent For a subgroup
$S$ of $\GL(2,K)$, set

\centerline{$\GL_S(2,K[t])=\{G(t)\in \GL(2,K[t]) \mid
G(0)\in S\}$.}

\begin{Nagaothm}  We have

\centerline{$\GL(2,K[t])\simeq
\GL(2,K)*_{B_{\GL}(k)}
\GL_{B_{\GL}(k)}(2,K[t])$.}
\end{Nagaothm}

\bigskip
\noindent
{\it 5.2 The group $\GL_S(2,K[t])$ is a mixed product}

\noindent  For any $\delta\in\P^1_K$, let $e_\delta\in\End(K^2)$ be a nilpotent element
with $\Image\, e_\delta=\delta$. For any commutative
$K$-algebra $R$, set

\centerline{$U_\delta(R):=\{ \id + r e_\delta \mid r\in R\}$.}

\noindent Obviously, $U_\delta(R)$ is a subgroup of
$\SL(2,R)$. Let $\gamma\in \GL(2,K)$ such that
$\gamma.\delta_0=\delta$  where 
$\delta_0\in \P^1_K$ has coordinates $(0;1)$.
We have 

\centerline{$U_{\delta_0}(R)=U(R)$ and 
$U_{\delta}(R)=U(R)^\gamma$.}

\begin{lemma}\label{mixed2} Let $S$ be a subgroup of $\GL(2,K)$. We have

\centerline
{$\GL_S(2,K[t])\simeq 
S\ltimes\,*_{\delta\in\P^1_K}
U_\delta(tK[t])$.}
\end{lemma}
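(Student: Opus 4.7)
The plan is to combine Nagao's theorem with Lemma \ref{mixing1}, after first reducing to the case $S=\GL(2,K)$.

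Evaluation at $t=0$, split by the inclusion of constant matrices, realizes $\GL(2,K[t])$ as $\GL(2,K)\ltimes N$ with $N:=\SL(2,tK[t])$ the congruence kernel. Since $\GL_S(2,K[t])$ is by definition the preimage of $S$ under evaluation, one has $\GL_S(2,K[t])=S\ltimes N$ for every $S\subset\GL(2,K)$. It therefore suffices to identify $N$, as a $\GL(2,K)$-group, with $*_{\delta\in\P^1_K} U_\delta(tK[t])$, i.e.\ to prove the lemma for $S=\GL(2,K)$.

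Next I would invoke Nagao's theorem to write $\GL(2,K[t]) \simeq \GL(2,K) *_{B_{\GL}(K)} B(K[t])$, where $B(K[t])$ is the subgroup of lower-triangular matrices of $\GL(2,K[t])$. A lower-triangular polynomial matrix with diagonal $a,d\in K^*$ and subdiagonal $b(t)\in K[t]$ factors uniquely as the constant matrix with subdiagonal $b(0)$ times $I+d^{-1}(b(t)-b(0))\,e_{\delta_0}$, so $B(K[t])=B_{\GL}(K)\ltimes U_{\delta_0}(tK[t])$ and $B_{\GL}(K)$ is a retract of $B(K[t])$. Lemma \ref{mixing1} applied with $S=\GL(2,K)$, $A=B_{\GL}(K)$, $G=B(K[t])$, $E=U_{\delta_0}(tK[t])$ then gives
\[
\GL(2,K[t]) \simeq \GL(2,K)\ltimes *_{\gamma\in P} E^\gamma,
\]
for any set $P$ of representatives of $\GL(2,K)/B_{\GL}(K)$.

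It remains to reindex by $\P^1_K$. Since $B_{\GL}(K)$ is the stabilizer of $\delta_0$ under the natural $\GL(2,K)$-action on $\P^1_K$, the map $\gamma\mapsto\gamma.\delta_0$ is a $\GL(2,K)$-equivariant bijection $\GL(2,K)/B_{\GL}(K)\simeq\P^1_K$. Choosing $\gamma_\delta$ with $\gamma_\delta.\delta_0=\delta$ and using $U_\delta(R)=\gamma_\delta U_{\delta_0}(R)\gamma_\delta^{-1}$ as recalled just before the lemma, each $E^{\gamma_\delta}$ is precisely $U_\delta(tK[t])$. Putting this together and restricting to the preimage of $S$ delivers the stated mixed-product description. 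The only substantial ingredient is Nagao's theorem; everything else is a formal use of Lemma \ref{mixing1} together with the identification of $\P^1_K$ with the flag variety of $\GL_2$.
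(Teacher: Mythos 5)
Your proof is correct and follows essentially the same route as the paper: Nagao's theorem, the splitting $B(K[t])=B_{\GL}(K)\ltimes U_{\delta_0}(tK[t])$, Lemma \ref{mixing1}, and the identification of $\GL(2,K)/B_{\GL}(K)$ with $\P^1_K$. You merely make explicit what the paper leaves to the reader --- the factorization inside $B(K[t])$, the reindexing of the free factors by $\P^1_K$, and the reduction to $S=\GL(2,K)$ via the congruence kernel $\SL(2,tK[t])$ --- correctly using the triangular-subgroup form of Nagao's theorem in the process.
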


\begin{proof} Clearly, it is enough to prove the lemma
for $S=\GL(2,K)$. Since 

\centerline{
$\GL_{B_{\GL(k)}(k)}(2,K[t])=
B_{\GL(k)}(k)\ltimes U(tK[t])$}

\noindent the lemma follows from Nagao's Theorem and Lemma \ref{mixing1}.
\end{proof}

\bigskip
\noindent
{\it 5.3 The groups $\SL(2,tK[t])$
and $\SL_{U(K)}(2,K[t])$ are free products}

\noindent For $S\subset\SL(2,K)$, the group
$\GL_{S}(2,K[t])$ lies in $\SL_{S}(2,K[t])$. Thus set

\centerline{$\SL_{S}(2,K[t]):=\GL_{S}(2,K[t])$.}

\noindent
Let $\delta_0,\,\delta_\infty$ be the points in $\P^1_K$ with  coordinates 
$(0;1)$ and $(1;0)$.

\begin{lemma}\label{Tits}  We have 

\centerline{$\SL(2,tK[t])= *_{\delta\in\P^1_K}\,U_\delta(tK[t]$,
and}

\centerline{$\SL_{U(K)}(2,K[t])=U([K[t]) *U_{\delta_\infty}(tK[t])$.} 
\end{lemma}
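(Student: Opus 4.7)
The plan is to derive both identities from Lemma \ref{mixed2}, specializing $S$ appropriately, and then, for the second identity, to exploit the fact that the action of $U(K)$ on $\P^1_K$ is almost free transitive in the sense of Subsection 2.8 so that Lemma \ref{almostft1} applies.

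For the first identity, I would take $S=\{1\}$ in Lemma \ref{mixed2}. First I must identify $\GL_{\{1\}}(2,K[t])$ with $\SL(2,tK[t])$: if $G(t)\in\GL(2,K[t])$ satisfies $G(0)=\id$, then $\det G(t)\in K[t]^\times=K^\times$ and $\det G(0)=1$ force $\det G=1$, while the condition $G(0)=\id$ is precisely the definition of the congruence subgroup $\SL(2,tK[t])$. Lemma \ref{mixed2} then gives
\[
\SL(2,tK[t])=\{1\}\ltimes *_{\delta\in\P^1_K} U_\delta(tK[t])= *_{\delta\in\P^1_K} U_\delta(tK[t]),
\]
which is the first claim.

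For the second identity, apply Lemma \ref{mixed2} with $S=U(K)$ to obtain
\[
\SL_{U(K)}(2,K[t])\simeq U(K)\ltimes *_{\delta\in\P^1_K} U_\delta(tK[t]).
\]
The main step is to check that the induced action of $U(K)$ on $\P^1_K$ is almost free transitive. An element of $U(K)$ is a matrix $\left(\begin{smallmatrix}1&0\\a&1\end{smallmatrix}\right)$; it sends the line $\delta_0=K\cdot(0,1)$ to itself and moves $\delta_\infty=K\cdot(1,0)$ to $K\cdot(1,a)$. Hence $\delta_0$ is fixed, while $U(K)$ acts simply transitively on $\P^1_K\setminus\{\delta_0\}$. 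Lemma \ref{almostft1} then yields
\[
\SL_{U(K)}(2,K[t])\simeq \bigl(U(K)\ltimes U_{\delta_0}(tK[t])\bigr)*U_{\delta_\infty}(tK[t]).
\]

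It remains to identify $U(K)\ltimes U_{\delta_0}(tK[t])$ with $U(K[t])$. Since $U_{\delta_0}=U$, both $U(K)$ and $U(tK[t])$ sit inside the abelian group $U(K[t])\simeq(K[t],+)$; in particular they commute, so the semidirect product is a direct product, and the natural map $U(K)\times U(tK[t])\to U(K[t])$ corresponding to the decomposition $K[t]=K\oplus tK[t]$ is an isomorphism. Combining the last two displays finishes the proof.

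The only nontrivial point is the verification that the $U(K)$-action on $\P^1_K$ is almost free transitive; once that is in hand, everything else is a direct application of results already proved in Sections 2 and 5.
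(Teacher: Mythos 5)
Your proof is correct and follows the same route as the paper's own (much terser) argument: both identities come from Lemma \ref{mixed2}, and the second is completed by observing that the $U(K)$-action on $\P^1_K$ is almost free transitive and invoking Lemma \ref{almostft1}. The extra details you supply — the determinant check identifying $\GL_{\{1\}}(2,K[t])$ with $\SL(2,tK[t])$, the verification of almost free transitivity, and the identification $U(K)\ltimes U(tK[t])\simeq U(K[t])$ via $K[t]=K\oplus tK[t]$ — are exactly the steps the paper leaves implicit.
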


\begin{proof} The first assertion follows from Lemma
\ref{mixed2}. Since the action of $U(K)$ on
$\P^1_K$ is almost free transitive, the second point follows from Lemma \ref{almostft1}.
\end{proof}

\smallskip
\noindent {\it Remark.}  The group
$\SL_{U(K)}(2,K[t])$ is the "lower nilradical" of the
affine Kac-Moody group $\SL(2,K[t,t^{-1}])$.
In \cite{T82}, Tits defined the "lower nilradical" of any Kac-Moody group in term of an inductive limit,
which  is essentialy equivalent to the previous lemma for $\SL_{U(K)}(2,K[t])$.
 
Since the notes \cite{T82} are not widely distributed, let us mention that an equivalent result is stated in \cite{T89}, Section 3.2 and 3.2, see also \cite{T87}.

\bigskip\noindent
{\it 5.4 Proof of Theorem A.2}

\begin{lemma}\label{iso} There are isomorphisms

\centerline{
$\Aut_1\,\na_K^2\simeq \SL(2,tK[t])$
and $\Aut_{U(K)}\,\na_K^2\simeq \SL_{U(K)}(2,K[t])$.}
\end{lemma}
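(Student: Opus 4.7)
The plan is to realize both sides of each claimed isomorphism as mixed products indexed by the same $S$-set, and then invoke Lemma \ref{mixing3}. With $S=\{1\}$ and $S=U(K)$, Lemma \ref{mixed} gives
$$\Aut_S\,\na^2_K \simeq S \ltimes\, *_{\delta\in \P^1_K} E_\delta(K),$$
while Lemma \ref{mixed2} gives
$$\SL_S(2,K[t]) \simeq S \ltimes\, *_{\delta\in \P^1_K} U_\delta(tK[t]).$$
In both pairs, the two actions on $\P^1_K$ coincide, since both are inherited from the natural linear action of $S\subset\GL(2,K)$. Thus by Lemma \ref{mixing3}, it suffices to exhibit, for one representative $\delta$ in each $S$-orbit, an $S_\delta$-group isomorphism $E_\delta(K)\simeq U_\delta(tK[t])$.

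As abelian groups, $E_\delta(K)$ is identified with $t^2K[t]$ via its defining parameter $f$, and $U_\delta(tK[t])$ is identified with $tK[t]$ via its parameter $r$. Multiplication by $t$ provides a $K$-linear isomorphism $tK[t]\xrightarrow{\sim} t^2K[t]$, hence an abstract group isomorphism $U_\delta(tK[t])\simeq E_\delta(K)$. For $S=\{1\}$, every stabilizer is trivial, so this already yields the first isomorphism of the lemma.

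For $S=U(K)$, the action on $\P^1_K$ is almost free transitive (as used in Lemma \ref{free2}), with fixed point $\delta_0=K.(0,1)$ and a free orbit represented by, say, $\delta_\infty=K.(1,0)$. At $\delta_\infty$ the stabilizer is trivial, so nothing further needs to be checked. At $\delta_0$, I would verify that $U(K)$ acts trivially on both factors: on $E_{\delta_0}(K)=\{(x,y)\mapsto(x,y+f(x))\mid f\in t^2K[t]\}$, conjugation by $(x,y)\mapsto(x,y+ax)$ is a one-line substitution showing the action is the identity; on $U_{\delta_0}(tK[t])=\{\id+r\,e_{\delta_0}\}$, the constant-matrix copy of $U(K)\subset\SL(2,K[t])$ lies in the same commutative one-parameter subgroup generated by $e_{\delta_0}$, hence commutes with every element. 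Consequently the additive isomorphism from the previous paragraph is automatically $U(K)$-equivariant at $\delta_0$, and Lemma \ref{mixing3} supplies the second isomorphism.

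The only substantive verification is the triviality of the two $U(K)$-actions at $\delta_0$, and both reduce to a short direct computation; the remainder is structural bookkeeping within the mixed product framework of Sections 2 and 5.
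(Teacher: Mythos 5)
Your proof is correct and follows essentially the same route as the paper: both identify the two sides of each isomorphism with (mixed or free) products over $\P^1_K$ coming from Lemmas \ref{mixed}, \ref{mixed2} and \ref{Tits}, and then match the factors, which are all $K$-vector spaces of dimension $\aleph_0$. The only difference is one of packaging: the paper passes to free-product decompositions (Lemmas \ref{free2} and \ref{Tits}) and compares the number and isomorphism type of the free factors, whereas you stay at the level of mixed products and invoke Lemma \ref{mixing3} after checking the (indeed trivial) $U(K)$-actions at $\delta_0$ --- which incidentally yields the slightly stronger conclusion that the second isomorphism can be chosen to be the identity on $U(K)$.
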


\begin{proof}  

By Lemmas \ref{mixed} and \ref{Tits}, 
$\Aut_1\,\na_K^2$ and
 $\SL(2,tK[t])$ are free products of 
 $\Card \P^1_K$ copies of  a $K$-vector space
 of dimension $\aleph_0$. Therefore these two groups are isomorphic.
 
 The proof of the isomorphism 
 $\Aut_{U(K)}\,\na_K^2\simeq \SL_{U(K)}(2,K[t])$
 follows similarly from
 Lemmas \ref{free2} and \ref{Tits}.
\end{proof}

\begin{MainA2}
The groups $\Aut_1\,\na_K^2$ and $\Aut_{U(K)}\,\na_K^2$ 
embed in $\SL(2,K(t))$.

Moreover if $K\supset k(t)$ for some infinite field 
$k$, then there exists an embedding 
 $\Aut_1\,\na_K^2\subset \Aut_{U(K)}\,\na_K^2\subset \SL(2,K)$.
\end{MainA2}

\begin{proof} It follows from Lemma \ref{iso} that
$\Aut_1\,\na_K^2$ and $\Aut_{U(K)}\,\na_K^2$ are
subgroups of $\SL(2,K(t)$, and therefore 
they are linear over $K(t)$.

Assume now that  $K\supset k(t)$ for some infinite field $k$. We claim that there exists a field $L$ with
$L(t)\subset K$ and $\Card\,L=\Card\,K$. If
$\Card\, K=\aleph_0$, then the subfield $k$ satisfies the claim. Otherwise, we have $\trdeg\,K> \aleph_0$
and there is an embedding $L(t)\subset K$ for some
subfield $L$ with $\trdeg\,L=\trdeg\,K$. Since
$\trdeg\,L=\Card L=\Card K$, the claim is proved.

It follows from Corollary \ref{Card} that 

\centerline{$\Aut_{U(K)}\,\na_K^2\simeq \Aut_{U(K)}\,\na_L^2\subset \SL(2,L(t))
\subset\SL(2,K)$,} 

\noindent therefore 
$\Aut_1\,\na_K^2$ and $\Aut_{U(K)}\,\na_K^2$ are
subgroups of $\SL(2,K)$. 
\end{proof}

\bigskip\noindent
{\it 5.5 A Corollary}

\noindent For a a finite field $K$, 
$\Aut_1\,\na_K^2$ has finite index in
$\Aut\,\na_K^2$, hence

\begin{Cor}  For a a finite field $K$, the
group $\Aut\,\na_K^2$ is linear over $K(t)$.
\end{Cor}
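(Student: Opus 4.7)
The plan is to combine the embedding $\Aut_1\,\na_K^2 \subset \SL(2,K(t))$ from Theorem A.2 with the finite-index observation stated just before the corollary, and then promote the former to an embedding of the larger group via an induced representation. All the serious work has been done in Theorem A.2, so this is essentially a routine application of a standard principle.

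First I would record the finite index explicitly. The quotient $\Aut\,\na^2_K/\Aut_0\,\na^2_K$ is isomorphic to $\na^2_K$, hence has $|K|^2$ elements; the quotient $\Aut_0\,\na^2_K/\Aut_1\,\na^2_K$ is isomorphic to $\GL(2,K)$ (via $\phi\mapsto \d\phi|_{\bf 0}$) and is therefore also finite. Multiplying, $[\Aut\,\na^2_K:\Aut_1\,\na^2_K] = n < \infty$ when $K$ is finite. Let $\rho : \Aut_1\,\na^2_K \hookrightarrow \SL(2,K(t))$ be the embedding supplied by Theorem A.2.

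Next I would form the induced representation $R := \mathrm{Ind}_{\Aut_1\,\na^2_K}^{\Aut\,\na^2_K}\,\rho$, which lands in $\GL(2n, K(t))$. The key point is that $R$ is still faithful. This is a general fact: if $H$ has finite index in $G$ and $\rho : H \to \GL(V)$ is faithful, then $\mathrm{Ind}_H^G\,\rho$ is faithful. Indeed, fix coset representatives $g_1,\dots, g_n$ and write $g_0 g_i = g_{\sigma(i)} h_i$ with $h_i \in H$; if $g_0$ lies in $\Ker R$, then the permutation $\sigma$ is trivial (so $g_0$ lies in the normal core of $H$) and each $h_i$ lies in $\Ker \rho = \{1\}$, forcing $g_0 = 1$. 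Applied with $G = \Aut\,\na^2_K$ and $H = \Aut_1\,\na^2_K$, this gives an embedding $\Aut\,\na^2_K \hookrightarrow \GL(2n, K(t))$, proving linearity over $K(t)$.

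There is no real obstacle here; the only thing to watch is that $\Aut_1\,\na^2_K$ is not normal in $\Aut\,\na^2_K$, so one cannot use the trivial extension argument and must genuinely pass through $\mathrm{Ind}_H^G$. But the faithfulness argument above is insensitive to normality, which is why this works cleanly.
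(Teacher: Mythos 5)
Your proof is correct and follows exactly the route the paper intends: the paper's own justification is the one-line remark that $\Aut_1\,\na_K^2$ has finite index in $\Aut\,\na_K^2$ for $K$ finite, combined with Theorem A.2, leaving the standard induced-representation argument implicit. You have simply made that standard step explicit (correctly, including the observation that normality is not needed), so there is nothing to add.
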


\section{The Linear Representation of $\SAut_{0}^{<n}\,\na_K^2$}

For $n\geq 3$, let
$\SAut_{0}^{<n}\,\na_K^2$
 be the subgroup of
$\SAut_0\,\na^2_K$
generated by all automorphisms
$\phi\in \SAut_0\,\na^2_K$ of degree $<n$.
In this section, {\it we will  assume that $K$ has characteristic zero}, in order to
show that the group
$\SAut_{0}^{<n}\,\na_K^2$ is linear, what
proves Theorem C.2.  Unfortunately, our approach does not extend to fields of finite characteristic.

For a nonzero vector-valued polynomial $v(t)=\sum\,v_i t^i$, let $\deg v$ be its degree and let 
$\hdc(v):=v_{\deg v}$ be its {\it highest degree component}.

\bigskip
\noindent
{\it 6.1 A ping-pong lemma}

\noindent Let $S$ be a group and 
let $\Gamma=S\ltimes*_{p\in P}\,F_p$ be a mixed product of $S$.

\begin{lemma}\label{ping-pong} Let $\Omega$ be a $\Gamma$-set,
and let  $(\Omega_p)_{p\in P}$ be a collection of subsets 
in $\Omega$.
Set $F^*_p=F_p\setminus\{1\}$ and assume

(i) the free product $*_{p\in P}\,F_p$ is nondegenerate and nondihedral,

(ii) $\Core_\Gamma(\cap_P\,S_p)$ is trivial,

(iii) the subsets $\Omega_p$ are nonempty and disjoint, and

(iv) we have $F^*_p.\Omega_q\subset \Omega_p$ whenever $p\neq q$.

Then $\Gamma$ acts faithfully on $\Omega$.
\end{lemma}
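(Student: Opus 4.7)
The plan is to show that the kernel $K$ of the $\Gamma$-action on $\Omega$ is trivial, by assuming $\gamma \in K \setminus \{1\}$ and deriving a contradiction. The argument splits into two reduction steps, both relying on Lemma \ref{conj2}, followed by a classical Tits-style ping-pong on the reduced decomposition.

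First I would reduce to an element of $\Gamma_1 := *_{p\in P} F_p$. Suppose $\gamma \in K \setminus \{1\}$. Hypotheses (i) and (ii) are precisely what is required to apply Lemma \ref{conj2}. If $\gamma \notin \Gamma_1$, then Lemma \ref{conj2}(ii) furnishes $v \in \Gamma_1$ with $(\gamma, v) \neq 1$. Since $\Gamma_1$ is normal in $\Gamma$, we have $(\gamma, v) = \gamma \cdot v \gamma^{-1} v^{-1} \in \Gamma_1$; and since $\gamma$ and $\gamma^{-1}$ act trivially and $K$ is normal, $v \gamma^{-1} v^{-1}$ acts trivially too, hence $(\gamma, v) \in K \cap \Gamma_1 \setminus \{1\}$. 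Replacing $\gamma$ by this commutator, we may assume $\gamma \in \Gamma_1 \setminus \{1\}$. Next, fix any $p \in P$. By Lemma \ref{conj2}(i) there is $v \in \Gamma_1$ with $\gamma^v \in E_{p,p}$, and $\gamma^v \in K$ by normality of $K$. Replacing $\gamma$ once more by $\gamma^v$, we may assume $\gamma \in K \cap E_{p,p}$.

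Now I run the ping-pong. Write the reduced decomposition $\gamma = u_1 u_2 \cdots u_m$ with $u_i \in F_{p_i}^*$, $p_i \neq p_{i+1}$, and $p_1 = p_m = p$. By nondegeneracy, $\Card P \geq 2$, so we may pick $q \in P$ with $q \neq p$; by (iii), $\Omega_q$ is nonempty, so pick $\omega \in \Omega_q$. Since $p_m = p \neq q$, hypothesis (iv) gives $u_m \omega \in \Omega_{p_m}$. For $1 \leq i < m$ we have $p_i \neq p_{i+1}$, so (iv) yields inductively $u_i u_{i+1} \cdots u_m \omega \in \Omega_{p_i}$; at $i = 1$ this reads $\gamma \omega \in \Omega_{p_1} = \Omega_p$. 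But $\Omega_p \cap \Omega_q = \emptyset$ by the disjointness in (iii), so $\gamma \omega \neq \omega$, contradicting $\gamma \in K$.

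The main obstacle is the initial reduction: the ping-pong step only handles elements of $\Gamma_1$ whose reduced word starts and ends at the same factor, so an element with a nontrivial $S$-component, or a $\Gamma_1$-element whose reduced type begins and ends on different factors, cannot be attacked directly by (iv). This is precisely what forces us to invoke the trivial-core hypothesis through Lemma \ref{conj2}: (ii) lets us pass from $\gamma \notin \Gamma_1$ to a nontrivial commutator lying in $K \cap \Gamma_1$, and (i) lets us conjugate any nontrivial element of $\Gamma_1$ into $E_{p,p}$. Once these reductions are in place, the ping-pong is essentially mechanical.
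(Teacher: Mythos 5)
Your proof is correct and follows the paper's argument exactly: the paper likewise applies Lemma \ref{conj2} to show that any nontrivial normal subgroup (in particular the kernel of the action) contains an element of $E_{p,p}$, and then observes that such an element sends $\Omega_{p'}$ into $\Omega_p$ and hence acts nontrivially. You have merely written out in full the two reduction steps and the inductive application of hypothesis (iv) that the paper leaves implicit.
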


\begin{proof} 
Let $p\neq p'$
be two elements in $P$. By Lemma \ref{conj2}, Assertion (i) any nontrivial
normal subgroup of $\Gamma$ 
contains  some $\gamma\in E_{p,p}$. 
We have $\gamma.\Omega_{p'}\subset\Omega_p$,
therefore $\gamma$ acts nontrivially on $\Omega$.
Since any nontrivial normal subgroup  acts nontrivially,
the action of $\Gamma$ on $\Omega$ is faithful.
 \end{proof}

\bigskip\noindent
{\it 6.2 The group  $\SAut_{0}^{<n}\,\na_K^2$ is a mixed product with trivial core}

\noindent
Let $\delta\in\P^1$ with  coordinates 
$(a;b)$. For $n\geq 3$, 
let $E_\delta^{<n}(K)\subset E_\delta(K)$  
be the subgroup  of all automorphisms of the form
$(x,y)\mapsto (x,y)+f(bx-ay)(a,b)$ where  
$f(t)\in t^2K[t]$ and $\deg f(t)<n$.

\begin{lemma}\label{mixed3} For any $n\geq 3$,
the group
$\SAut_{0}^{<n}\,\na_K^2$ is isomorphic to the nondegenerate mixed product

\centerline{$\Gamma:=\SL(2,K)\ltimes*_{\delta\in\P^1_K}E_\delta^{<n}(K)$.}

\noindent Moreover $\Core_\Gamma(\SL(2,K))$ is trivial.
\end{lemma}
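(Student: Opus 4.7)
The plan is to derive the mixed-product decomposition from Lemma \ref{mixed} and then to establish triviality of the core by a reduced-word calculation in the mixed product.

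By Lemma \ref{mixed}, $\SAut_0\,\na_K^2 \simeq \SL(2,K)\ltimes *_{\delta\in \P^1_K} E_\delta(K)$. Since the inclusions $E_\delta^{<n}(K)\hookrightarrow E_\delta(K)$ are $\SL(2,K)_\delta$-equivariant, they induce an $\SL(2,K)$-equivariant embedding $*_\delta E_\delta^{<n}(K)\hookrightarrow *_\delta E_\delta(K)$ (by the normal form theorem for free products: a reduced word in the smaller factors remains reduced in the larger product), and hence an injective homomorphism $\iota:\Gamma\hookrightarrow \SAut_0\,\na_K^2$. To identify $\iota(\Gamma)$ with $\SAut_0^{<n}\,\na_K^2$, I would appeal to the multiplicativity of degree in the van der Kulk decomposition: any reduced word $s\cdot e_1\cdots e_k$ in the mixed product has polynomial degree $\prod_i\deg e_i$ with each $\deg e_i\ge 2$, so the bound $\deg<n$ forces $k\le 1$ and $e_1\in E_\delta^{<n}(K)$ if $k=1$. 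Hence every degree-$<n$ automorphism lies in $\iota(\Gamma)$, while $\iota(\Gamma)$ is obviously generated by degree-$<n$ elements. Nondegeneracy is clear: $|\P^1_K|\ge 2$, and $e_\delta(t^2)\in E_\delta^{<n}(K)\setminus\{1\}$ since $n\ge 3$.

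For the triviality of $\Core_\Gamma(\SL(2,K))$, suppose $s\in\Core_\Gamma(\SL(2,K))\setminus\{1\}$. For any $\delta\in \P^1_K$ and any nontrivial $e\in E_\delta^{<n}(K)$, one has $ese^{-1}\in \SL(2,K)$. Rewriting in the semidirect-product structure, $ese^{-1}=s\cdot s^{-1}(e)\cdot e^{-1}$, where $s^{-1}(e):=s^{-1}es\in E_{s^{-1}.\delta}^{<n}(K)\setminus\{1\}$. If $s^{-1}.\delta\ne\delta$, the product $s^{-1}(e)\cdot e^{-1}$ is a length-two reduced word in the free product $*_\delta E_\delta^{<n}(K)$, so $ese^{-1}$ has a nontrivial free-product part and cannot lie in $\SL(2,K)$. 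Hence $s$ fixes every point of $\P^1_K$, forcing $s$ to be scalar with $\det s=1$, so $s=\pm I$.

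To rule out $s=-I$, I would use the direct computation that $(-I)\,e_\delta(t^2)\,(-I)^{-1}=e_\delta(-t^2)=e_\delta(t^2)^{-1}$ (the action sends the defining polynomial $f(t)$ to $-f(-t)$, and $e_\delta(f)$ is additive in $f$). With $e:=e_\delta(t^2)$ this gives $e(-I)e^{-1}=(-I)\cdot(-I)(e)\cdot e^{-1}=(-I)\cdot e^{-2}$, whose free-product part $e^{-2}$ is nontrivial since $\ch K=0$; this contradicts $-I\in\Core_\Gamma(\SL(2,K))$. The most delicate point of the plan is the degree-multiplicativity step in the first paragraph (invoking van der Kulk); the core calculation itself is entirely formal once the semidirect-product formulas are in place.
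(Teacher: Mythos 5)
Your overall strategy coincides with the paper's: decompose $\SAut_0\,\na_K^2$ as $\SL(2,K)\ltimes *_{\delta}E_\delta(K)$ via Lemma \ref{mixed}, use multiplicativity of the degree along reduced decompositions to identify the subgroup generated by the degree-$<n$ automorphisms with $\SL(2,K)\ltimes *_{\delta}E_\delta^{<n}(K)$, and then show the core is trivial by observing that a core element must fix $\P^1_K$ pointwise, hence equal $\pm\id$, and that $-\id$ inverts $e_\delta(t^2)$. Your computation $e(-\id)e^{-1}=(-\id)\,e^{-2}$ is exactly the paper's observation that $\tau^{\sigma}=\tau^{-1}$ for $\sigma=-\id$ and $\tau(x,y)=(x,y+x^2)$, and your reduced-word argument showing the core acts trivially on $\P^1_K$ is a hands-on version of the general identity $\Core_\Gamma(S)=\Core_\Gamma(\cap_P S_p)$ that the paper imports from subsection 3.5.

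One intermediate assertion is false as stated: from $\deg\phi=\prod_i\deg e_i<n$ and $\deg e_i\ge 2$ you conclude $k\le 1$, but this only yields $2^k<n$; for $n\ge 5$ a reduced word with $k=2$ factors of degree $2$ has degree $4<5$, so longer words do occur. The conclusion you need survives without it: each factor satisfies $\deg e_i\le\prod_j\deg e_j=\deg\phi<n$, hence lies in $E_{\delta_i}^{<n}(K)$, so $\phi\in\iota(\Gamma)$ whatever $k$ is. This is in fact how the paper argues, deducing $\SAut_{0}^{<n}\,\na_K^2=\langle \SL(2,K),E_\delta^{<n}(K)\rangle$ directly from $\deg\phi=\prod\deg u_i$. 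Finally, note that the degree multiplicativity you invoke is not literally part of van der Kulk's Theorem; the paper establishes it by induction on the length of the reduced word, computing simultaneously the degree and the highest-degree component of $u_1\cdots u_m$. Since you correctly flag this as the delicate point, supplying that short induction would close the argument.
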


\begin{proof} Let  
$u\in *_{\delta\in\P^1_K}E_\delta(K)$ with reduced
decomposition $u_1\dots u_m$,
where $u_i\in E_{\delta_i}(K)$.
By induction over $n$, it is easy to prove
simultaneously  that $\deg\,u=\prod\,\deg\,u_i$
and that $\hdc(u)$ is of the form
$(x,y)\mapsto (bx-ay)^{\deg u} (c,d)$,
where $(c;d)$ and $(a;b)$
are some  coordinates of
$\delta_1$ and $\delta_n$.

By Lemma \ref{mixed}, 
$\SAut_{0}\,\na_K^2$ is isomorphic to the mixed product

\centerline{$\SL(2,K)\ltimes*_{\delta\in\P^1_K}E_\delta(K)$.}

\noindent Any  $\phi\in \SAut_{0}\,\na_K^2$
decomposes uniquely as $\phi=s u_1\dots u_m$, where
$s\in\SL(2,K)$ and $u_1\dots u_m$ is a reduced decomposition in $*_{\delta\in\P^1_K}E_\delta(K)$.
Since $\deg \phi=\prod\,\deg\,u_i$, we have 
$\SAut_{0}^{<n}\,\na^2_K=
\langle \SL(2,K), E_\delta^{<n}(K)\rangle$. Thus

\centerline{$\SAut_{0}^{<n}\,\na_K^2\simeq\SL(2,K)\ltimes*_{\delta\in\P^1_K}E_\delta^{<n}(K)$.}

\noindent It has been noticed that 
$\Core_\Gamma(\SL(2,K))$ acts trivially on 
$\P^1_K$, hence it is included in  
$\{1,\sigma\}$, where $\sigma(x,y)=(-x,-y)$. Set $\tau(x,y):=(x,y+x^2)$.
Since $\tau^{\sigma}(x,y)=(x,y-x^2)$, it follows that  $\Core_\Gamma(\SL(2,K))$ is trivial.
\end{proof}

\bigskip
\noindent
{\it 6.3 The square root $\eta$ of $e$}

\noindent Let $\epsilon$ be an odd variable. For an integer $N\geq 1$, let
$L(N)\subset K[x,y]$ and
${\hat L}(N)\subset K[x,y,\epsilon]$
be the subspaces of homogenous polynomials of degree $N$. Let $(e,h,f)$ be the usual basis
of $\fsl(2,K)$. As an $SL(2,K)$-module, we have ${\hat L}(N)=L(N)\oplus L(N-1)$
and $e$ acts as the derivation 
$x{{\partial}\over{\partial y}}$. Set

\centerline{
$\eta=x{\partial\over\partial \epsilon} +\epsilon {\partial\over\partial y}$.}

\noindent It is clear that $\eta^2=e$.
Indeed $\hat L(N)$ is a representation of the Lie superalgebra $\osp(1,2)$, and 
$\eta\in \osp(1,2)$ is an odd element such that
$\eta^2=e$.

For any
$\delta\in\P^1_K$ with projective coordinates 
$(a;b)$, set $L_\delta:=K.(ax+by)^N$ and
$L_\delta^*=L_\delta\setminus\{0\}$. Let 
$\delta_0, \delta_\infty\in\P^1_K$ be the points with 
projective coordinates $(0;1)$ and $(1;0)$.
Since 
$\eta^{2N}.y^N=
(x{{\partial}\over{\partial y}})^N
y^N=N!\,x^N$, it follows that

\begin{lemma}\label{Jordan2}
We have $\eta^{2N}.L_{\delta_{\infty}}^*
\subset L_{\delta_0}^*$.
\end{lemma}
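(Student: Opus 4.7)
The plan is to carry out the explicit $\osp(1,2)$-computation the author has sketched: verify $\eta^2 = e$, then compute $e^N\cdot y^N$ by hand, and match the result against the defining conventions for the lines $L_{\delta_0}$ and $L_{\delta_\infty}$. The lemma is essentially a one-page super-calculation, so the plan is linear and there is no genuine obstacle.

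First I would check the identity $\eta^2 = e$ which is asserted but not verified in the excerpt. Expanding $\eta^2 = (x\partial_\epsilon + \epsilon\partial_y)^2$ as a composition of operators on $K[x,y,\epsilon]$ produces four summands. The square $(x\partial_\epsilon)^2 = x^2\partial_\epsilon^2$ vanishes because $\partial_\epsilon^2 = 0$; the square $(\epsilon\partial_y)^2 = -\epsilon^2\partial_y^2$ vanishes because $\epsilon^2 = 0$; and the two mixed terms $x\partial_\epsilon\cdot\epsilon\partial_y$ and $\epsilon\partial_y\cdot x\partial_\epsilon$ combine, via the super-Leibniz relation $\partial_\epsilon\epsilon + \epsilon\partial_\epsilon = 1$ together with the fact that $\partial_y$ commutes with both $x$ and $\epsilon$, to yield $x\partial_y = e$. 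Hence $\eta^{2N} = e^N$ as endomorphisms of $\hat L(N)$, and in particular of its even part $L(N)$.

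Second, a one-line induction on $k$ gives $(x\partial_y)^k y^N = \frac{N!}{(N-k)!}\,x^k y^{N-k}$ for $0\le k\le N$, so at $k=N$ one obtains $\eta^{2N}\cdot y^N = N!\,x^N$. Since the whole section is working in characteristic zero, $N!$ is invertible in $K$ and the image is nonzero. Under the convention adopted here, $y^N$ spans the line $L_{\delta_\infty}$ and $x^N$ spans the line $L_{\delta_0}$, so this calculation says precisely that $\eta^{2N}$ sends any nonzero element of $L_{\delta_\infty}$ to a nonzero element of $L_{\delta_0}$, which is the inclusion $\eta^{2N}\cdot L_{\delta_\infty}^* \subset L_{\delta_0}^*$ claimed by the lemma. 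The underlying content is nothing more than the classical fact that on the irreducible $\SL(2,K)$-module $L(N)$ the operator $e^N$ is a nonzero isomorphism between the two extremal weight lines; the presence of $\eta$ with $\eta^2 = e$ is what will later be exploited, not what is needed for this statement.
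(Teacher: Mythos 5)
Your proposal is correct and is essentially identical to the paper's own proof, which consists of the single displayed computation $\eta^{2N}.y^N=\bigl(x\tfrac{\partial}{\partial y}\bigr)^N y^N=N!\,x^N$ placed immediately before the statement (with $N!\neq 0$ because $\ch K=0$); your extra verification of $\eta^2=e$ just makes explicit what the paper asserts when introducing $\osp(1,2)$. Note that you have silently adopted the reading $L_{\delta_\infty}=K.y^N$, $L_{\delta_0}=K.x^N$, which is what the computation requires and is clearly the author's intent, even though the literal formula $L_\delta=K.(ax+by)^N$ combined with the stated coordinates $(1;0)$ and $(0;1)$ would give the opposite assignment --- a small inconsistency in the paper's conventions, not in your argument.
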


\bigskip
\noindent
{\it 6.4 The representation $\rho_N$ of
$\SAut_0\,\na^2_K$ on 
${\hat L}(N)\otimes K[t]$}

\noindent We will extend the natural representation of
of $\SL(2,K)$ on ${\hat L}(N)\otimes K[t]$
to 
$\SAut_0\,\na^2_K$ as follows. For any
 automorphism $\tau\in E(K)$, set 

\centerline{$\rho_N(\tau)=\exp (t\eta f(\eta))$,}

\noindent if 
$\tau(x,y)=(x,y+f(x))$, where 
$f(x)\in x^2K[x]$.  
Since $[e,\eta]=0$ and $[h,\eta]=\eta$, the homomorphism
$\rho_N$ is $B(K)$-equivariant. By 
Lemma \ref{universal}, $\rho_N$ extends to a
$K[t]$-linear action of $\SAut_0\,\na^2_K$.

\begin{lemma}\label{faith}
Assume that $2N$ is divisible by
$\lcm(1,2,\dots, n)$. 

Then the restriction
of $\rho_N$ to $\SAut_{0}^{<n}\,\na_K^2$ is faithful.
\end{lemma}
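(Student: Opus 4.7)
The plan is to apply the ping-pong Lemma \ref{ping-pong} to the mixed-product decomposition of $\Gamma := \SAut_{0}^{<n}\,\na_K^2$ provided by Lemma \ref{mixed3}. That lemma already identifies $\Gamma \simeq \SL(2,K) \ltimes *_{\delta \in \P^1_K} E_\delta^{<n}(K)$ as a nondegenerate mixed product with trivial core $\Core_\Gamma(\SL(2,K))$, so hypotheses (i) and (ii) of the ping-pong lemma hold immediately. It then remains to exhibit a $\Gamma$-set $\Omega$ and a collection of nonempty disjoint subsets $\{\Omega_\delta\}_{\delta \in \P^1_K}$ of $\Omega$ such that $\rho_N(\tau)\cdot\Omega_{\delta'} \subseteq \Omega_\delta$ whenever $\tau \in E_\delta^{<n}(K)\setminus\{1\}$ and $\delta' \neq \delta$.

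The natural $\Gamma$-set is $\Omega = \hat L(N) \otimes K[t]$ with the action $\rho_N$, and each $\Omega_\delta$ will be defined to record that the top-degree-in-$t$ behavior of $v(t)$ is concentrated on the line $L_\delta \subset L(N)$. The computation driving the whole argument is as follows: a nontrivial $\tau \in E_\delta^{<n}(K)$ corresponds to $f \in x^2 K[x]$ of degree $k \in \{2,\dots,n-1\}$, and $\rho_N(\tau) = \exp(t\,\eta_\delta f(\eta_\delta))$. The operator $\eta_\delta f(\eta_\delta)$ has $\eta_\delta$-leading degree $k+1$ and is nilpotent on $\hat L(N)$ of index $m_0+1$, where $m_0 := 2N/(k+1)$; the divisibility hypothesis $\lcm(1,\dots,n)\mid 2N$ is precisely what forces $m_0$ to be an integer for every admissible $k$, and the leading $\eta_\delta$-term of $(\eta_\delta f(\eta_\delta))^{m_0}$ is $c_k^{m_0}\,\eta_\delta^{2N} = c_k^{m_0}\,e_\delta^N$. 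By Lemma \ref{Jordan2}, $e_\delta^N$ sends $L_{\delta'}$ bijectively onto $L_\delta$ whenever $\delta' \neq \delta$, so the leading-in-$t$ coefficient of $\rho_N(\tau) v$, for $v \in \Omega_{\delta'}$, carries a nonzero $L_\delta$-component.

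The main obstacle, and the source of the delicacy in designing the $\Omega_\delta$, is the appearance for $k \geq 3$ of subleading $\eta_\delta$-contributions in $(\eta_\delta f(\eta_\delta))^{m_0}$: when applied to a vector in $L_{\delta'}$, these produce contributions to the top-degree-in-$t$ coefficient which lie outside $L_\delta$. This rules out the naive choice $\Omega_\delta = \{v : \hdc v \in L_\delta \setminus \{0\}\}$, and forces a more subtle definition, for instance one obtained by projecting $\hdc v$ onto $L_\delta$ along an $E_\delta^{<n}(K)$-invariant complement built from the $\osp(1,2)$-filtration of $\hat L(N)$ by the $\eta_\delta$-images $\hat L(N) \supset \mathrm{image}(\eta_\delta) \supset \mathrm{image}(\eta_\delta^2) \supset \cdots$. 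Verifying disjointness of the $\Omega_\delta$ and the absorption condition (iv) with such a refined $\Omega_\delta$ then reduces to a careful bookkeeping of the action of $(\eta_\delta f(\eta_\delta))^{m_0}$ modulo the chosen complement, using that the subleading $\eta_\delta^j$ ($j<2N$) push vectors into strictly deeper pieces of the filtration.

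Once such $\Omega_\delta$ are constructed, Lemma \ref{ping-pong} yields that $\Gamma$ acts faithfully on $\Omega$ through $\rho_N$, which is precisely the desired faithfulness of $\rho_N\vert_{\SAut_{0}^{<n}\,\na_K^2}$.
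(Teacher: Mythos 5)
Your overall strategy is exactly the paper's: apply the ping-pong Lemma \ref{ping-pong} to the mixed product of Lemma \ref{mixed3}, acting on $\Omega=\hat L(N)\otimes K[t]$, with $\Omega_\delta$ detecting the line $L_\delta$ inside the highest-degree-in-$t$ coefficient; and your computation of the leading term $c^{d}\eta_\delta^{2N}$ with $d=2N/(k+1)$ is the right one (note only that $k$ must be the \emph{order}, i.e.\ the lowest-degree term of $f$, not its degree, since $\eta_\delta$ is nilpotent). But the ``main obstacle'' you identify does not exist, and your proposed workaround is where the proof stops being a proof. Every subleading monomial of $(\eta_\delta f(\eta_\delta))^{d}$ is a scalar multiple of $\eta_\delta^{\,j}$ with $j\geq d(k+1)+1=2N+1$, and $\eta_\delta^{\,2N+1}=0$ on the $(2N+1)$-dimensional space $\hat L(N)$ (this is precisely why the paper works with $\osp(1,2)$ and $\hat L(N)$ rather than with $e$ and $L(N)$). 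Hence $(\eta_\delta f(\eta_\delta))^{d}=a^{d}\eta_\delta^{2N}$ \emph{exactly}, so $\hdc(\rho_N(\tau))=\tfrac{a^d}{d!}\eta_\delta^{2N}$ with no correction terms, and the ``naive'' choice $\Omega_\delta=\{v\neq 0 \mid \hdc(v)\in L_\delta^*\}$ --- the one you reject --- is the one that works: disjointness is immediate since distinct lines $L_\delta$ meet only at $0$, and condition (iv) follows from Lemma \ref{Jordan2} for the pair $(\delta_0,\delta_\infty)$ together with the transitivity of $\SL(2,K)$ on ordered pairs of distinct points of $\P^1_K$.

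As written, your argument is therefore incomplete: the refined $\Omega_\delta$ ``obtained by projecting along an invariant complement built from the $\eta_\delta$-filtration'' is never actually defined, and the ``careful bookkeeping'' verifying disjointness and absorption is never carried out, so the hypotheses of the ping-pong lemma are not established. The fix is not to refine $\Omega_\delta$ but to notice the vanishing $\eta_\delta^{2N+1}=0$ and run the argument with the naive sets, exactly as the paper does.
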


\begin{proof} For any $\delta\in \P^1_K$, set
$F_\delta^*=E_{\delta}^{<n }(K)\setminus\{1\}$ and

\centerline{$\Omega_\delta=\{v(t)\in {\hat L}(N)\otimes K[t]\setminus\{0\}\mid \hdc(v(t))\in L_\delta^*\}$.} 

\noindent {\it First step.} Let 
$\delta_0, \delta_\infty\in\P^1_K$ be as in Lemma
\ref{Jordan2}.
We claim that 

\centerline{$F_{\delta_{0}}^*.\Omega_{\delta_\infty}\subset 
\Omega_{\delta_0}$.}

\noindent Let
$\tau(x,y)=(x,y+f(x))$ be in 
$F_{\delta_{0}}^*$.
We have
$f(x)=ax^k+$ higher terms, for some $a\in K^*$
and some $k$ with  
$2\leq k<n$. By definition, we have

\centerline{
$\rho_N(\tau)=\exp t\eta f(\eta)=
\sum_{m\geq 0}\, {\eta^m f(\eta)^m\over m!}\, t^m$.}

\noindent Since $\eta f(\eta)$ is divisible 
by $\eta^{k+1}$ and $\eta^{2N+1}=0$, it follows that $\eta^m f(\eta)^m=0$ for $m>2N/(k+1)$. Since $k+1$ divides $2N$, $\rho_N(\tau)$ is a polynomial of degree $d:=2N/(k+1)$ and we have

\centerline{$\hdc(\rho_N(\tau))=
{a^{d}\over d!}\,\eta^{2N}$. }

\noindent Let $v(t)\in\Omega_{\delta_{\infty}}$. 
By Lemma
\ref{Jordan2}, $\hdc(\rho_N(\tau)).\hdc(v(t))$ is nonzero and belongs to $L_{\delta_0}^*$. It follows that 
$\rho_N(\tau).
\Omega_{\delta_\infty}\subset\Omega_{\delta_0}$, what proves the claim.

\smallskip
\noindent{\it Second step: use of the ping-pong lemma.}
Let $\delta\neq \delta'$ in $\P^1_K$. Since
$F_\delta^*\times\Omega_{\delta'}$ is conjugate
under  $\SL(2,K)$ to
$F_{\delta_0}^*\times\Omega_{\delta_\infty}$, the previous result implies that

\centerline{$F_\delta^*.\Omega_{\delta'}\subset 
\Omega_{\delta}$.}

\noindent
By Lemmas \ref{ping-pong} and \ref{mixed3}, the restriction
of $\rho_N$ to $\SAut_{0}^{<n}\,\na_K^2$ is faithful.
\end{proof}

\bigskip
\noindent{\it 6.5 Proof of Theorem C.2}

\noindent Since $\dim {\hat L}(N)=2N+1$,   Lemma \ref{faith} implies that

\begin{MainC2} 

For any $n\geq 3$, there is an embedding

\centerline{$\SAut_{0}^{<n}\,\na_K^2\subset 
\SL(1+\lcm(1,2,\dots,n), K(t))$.}

In particular, any f.g. subgroup of
$\SAut_{0}\,\na_K^2$ is linear over $K(t)$.

\end{MainC2}

\section{Semi-algebraic Characters}

 Let
$\Lambda\subset K^*$ be a subgroup. For any
$n\geq 1$, let $K_n\subset K$ be the subfield
generated by $\Lambda^n$. 
Let $L$ be  an algebraically closed field,
which contains at least one subfield isomorphic to $K_1$ and let $\F$ be the ground field of $K$.

For $n\geq 1$,
a group homomorphism $\chi: \Lambda\to L^*$
is called a {\it semi-algebraic character}
of degree $n$ if $\chi(z)=\mu(z^n)$
for some field embedding $\mu:K_n\to L$.
Let ${\cal X}_n(\Lambda)$ be the set of
all semi-algebraic characters of $\Lambda$
of degree $n$. The degree of a semi-algebraic character is {\it not uniquely defined}.
Given $n\neq m$, we will show a criterion for the disjointness of ${\cal X}_n(\Lambda)$ and
${\cal X}_m(\Lambda)$.

\bigskip
\noindent
{\it 7.1 The invariant $I_n(\Lambda)$}

\noindent  Let $\F[\Lambda]$
be the group algebra of $\Lambda$.
Given a field $E\supset\F$, any  homomorphism 
$\chi:\Lambda\to E^*$ extends to an
algebra homomorphism 
${\hat\chi}:\F[\Lambda]\to E$. Set

\centerline{$\Ker\,{\hat\chi}:=
\{\sum_{\lambda}\,a_\lambda \lambda\in
\F[\Lambda]
\mid \sum_\lambda\,a_\lambda \chi(\lambda)=0\}$.}

\noindent For $n\geq 1$, let $\chi_n$ be the  homomorphism
$\chi_n:\lambda\in\Lambda \mapsto\lambda^n\in K_n^*$. Set

\centerline{$I_n(\Lambda)=\Ker\,{\hat\chi}_n$.}

\begin{lemma} \label{invariant}  A group homomorphism
$\chi: \Lambda\to L^*$
is  a  semi-algebraic character
of degree $n$ iff 
$\Ker\,{\hat\chi}=I_n(\Lambda)$.

In particular, we have
${\cal X}_n(\Lambda)={\cal X}_m(\Lambda)$ or
${\cal X}_n(\Lambda)
\cap{\cal X}_m(\Lambda)=\emptyset$, for any
 positive integers  $n\neq m$.
\end{lemma}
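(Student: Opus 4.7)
The plan is to characterize semi-algebraic characters of degree $n$ by the purely algebraic condition $\Ker\hat\chi = I_n(\Lambda)$, from which the dichotomy will follow immediately because this condition depends only on the ideal $I_n(\Lambda)$. The forward implication is essentially formal: if $\chi = \mu\circ\chi_n$ for a field embedding $\mu:K_n\to L$, then $\hat\chi = \mu\circ\hat\chi_n$ as $\F$-algebra homomorphisms on $\F[\Lambda]$, and the injectivity of $\mu$ yields $\Ker\hat\chi = \Ker\hat\chi_n = I_n(\Lambda)$.

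For the converse I would assume $\Ker\hat\chi = I_n(\Lambda)$ and construct the required field embedding $\mu:K_n\to L$. Writing $R_n$ for the image of $\hat\chi_n$, namely the $\F$-subalgebra of $K_n$ generated by $\Lambda^n$, the equality of kernels allows $\hat\chi$ to be factored as $\mu_0\circ\hat\chi_n$ for a unique injective ring homomorphism $\mu_0:R_n\to L$, and by construction $\mu_0(z^n) = \chi(z)$ for every $z\in\Lambda$.

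The main (mild) obstacle is then extending $\mu_0$ to a field embedding $\mu:K_n\to L$, which requires knowing that the fraction field of $R_n$ coincides with $K_n$ itself. The key point here is that $\Lambda^n\subset K^*$ is a \emph{group}, so $R_n$ already contains both $z^n$ and $z^{-n}$ for every $z\in\Lambda$; its fraction field is therefore a subfield of $K_n$ containing the generating set $\Lambda^n$ of $K_n$, and hence equals $K_n$. The universal property of fraction fields then produces the desired extension $\mu:K_n\to L$, and the identity $\chi(z)=\mu(z^n)$ exhibits $\chi$ as a semi-algebraic character of degree $n$.

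Finally, the second assertion is an immediate consequence: by the equivalence just proved, ${\cal X}_n(\Lambda)=\{\chi:\Lambda\to L^*\mid \Ker\hat\chi = I_n(\Lambda)\}$, so ${\cal X}_n(\Lambda) = {\cal X}_m(\Lambda)$ whenever $I_n(\Lambda) = I_m(\Lambda)$, and the two sets are disjoint otherwise since no single $\chi$ can have $\Ker\hat\chi$ equal to two distinct ideals.
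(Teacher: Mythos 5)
Your proof is correct and follows essentially the same route as the paper: the paper's (terser) argument likewise observes that the fraction field of the prime ring $\F[\Lambda]/I_n(\Lambda)$ is $K_n$ and factors $\hat\chi$ through it, which is exactly your factorization $\hat\chi=\mu_0\circ\hat\chi_n$ followed by extension to the fraction field. Your write-up merely makes explicit the details (injectivity of $\mu$ in the forward direction, and the identification $\mathrm{Frac}(R_n)=K_n$) that the paper leaves implicit.
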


\begin{proof}
By definition, the fraction field of the prime ring 
$\F[\Lambda]/I_n(\Lambda)$ is $K_n$. Hence
${\hat\chi}$ factors through $K_n$, i.e.
${\hat\chi}=\mu\circ{\hat\chi}_n$ for
some field embedding $\mu:K_n\to L$.
The first point follows, as well as the second.
\end{proof}

\bigskip
\noindent
{\it 7.2 Minimally bad subgroups of $K^*$}

\noindent Let $\Lambda\subset K^*$ be a subgroup.
By definition, the {\it transcendental
degree} of $\Lambda$ is
 $\trdeg\,\Lambda:=\trdeg K_1$ and its
  {\it rank} is
$\rk\,\Lambda:=\dim\,\Lambda\otimes\Q$.
Both are cardinals and we have
$\trdeg\,\Lambda\leq \rk\,\Lambda$. 
We say that $\Lambda$ is a {\it good subgroup}
of $K^*$ if $\trdeg\,\Lambda'= \rk\,\Lambda'$, for any
f.g. subgroup $\Lambda'$ of $\Lambda$ and a {\it bad subgroup} otherwise.

Assume now that $\Lambda$ is a  free abelian group of rank $r<\infty$,
with basis $x_1,\dots,x_r$. The ring 
$\F[\Lambda]$  is isomorphic to the ring
$\F[x_1^{\pm1},\dots x_r^{\pm1}]$ of Laurent polynomials. 
For $\alpha=(\alpha_1,\dots,\alpha_r)\in\Z^r$,
set  $x^\alpha=x_1^{\alpha_1},\dots x_r^{\alpha_r}$. 
The {\it support} of a Laurent polynomial
$P=\sum_{\alpha\in\Z^r}\,a_\alpha x^\alpha$
 is the set

\centerline{$\Supp\,P:=\{\alpha\in\Z^r\mid a_\alpha\neq 0\}$.}

\noindent Assume now that $\trdeg\,\Lambda=r-1$.
Since $\F[x_1^{\pm1},\dots x_r^{\pm1}]$  is
a unique factorization domain, $I_1(\Lambda)$ 
is  a principal ideal. If $P$ be one of its generator, 
the other generators are the polynomials $ax^{\gamma}P$, for $a\in \F^*$ and $\gamma\in\Z^r$.
Hence  the subgroup  
$X(\Lambda)\subset\Z^r$ generated by
 $\alpha-\beta$
for $\alpha,\,\beta\in \Supp\,P$
only depends on $\Lambda$. Moreover,
if $0\in \Supp\,P$,  then we have
$X(\Lambda)=\langle \Supp\,P\rangle$.

A subgroup $\Lambda\subset K^*$ is called 
{\it minimally bad} if

(i) $\Lambda$ is a f.g. free abelian group,  and 

(ii) we have $\rk\,\Lambda=1+\trdeg\,\Lambda$ and $X(\Lambda)=\Z^r$, where $r=\rk\,\Lambda$.

\begin{lemma}\label{minimal}
Let $\Lambda\subset K^*$ be  a
 bad subgroup of $K^*$.

Then $\Lambda$ contains a minimally bad
subgroup $\Lambda'$.
\end{lemma}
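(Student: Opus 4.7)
The proof proceeds in four steps; I describe each in turn.

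\emph{Step 1.} Since $\Lambda$ is bad, select a finitely generated subgroup $\Lambda_0\subset\Lambda$ with $\rk\Lambda_0>\trdeg\Lambda_0$, chosen to have minimal rank $r$. Any f.g. abelian group splits as torsion $\oplus$ free, and the torsion summand, consisting of roots of unity, contributes neither to rank nor to transcendence degree. Hence I may replace $\Lambda_0$ by its free part and assume $\Lambda_0$ is free abelian of rank $r$.

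\emph{Step 2.} Setting $d:=\trdeg\Lambda_0$, I claim $r=d+1$. Given any basis $y_1,\dots,y_r$ of $\Lambda_0$, the field $\F(\Lambda_0)$ has transcendence degree $d$ over $\F$, so one can extract a transcendence basis of size $d$ from among the $y_i$. Adjoining any remaining $y_i$ produces a free subgroup of rank $d+1$ with transcendence degree $d$, which is still bad; minimality of $r$ therefore forces $r=d+1$.

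\emph{Step 3.} Let $P$ be a generator of the principal ideal $I_1(\Lambda_0)\subset\F[\Lambda_0]$, normalized so that $0\in\Supp P$, and set $X:=X(\Lambda_0)=\langle\Supp P\rangle\subset\Z^r$. I next show $\rk X=r$: if $s:=\rk X<r$, pick a $\Z$-basis $\beta_1,\dots,\beta_s$ of $X$, extend to a $\Z$-basis $\beta_1,\dots,\beta_r$ of $\Z^r$, and change the multiplicative basis of $\Lambda_0$ to $\mu_j:=y^{\beta_j}$. In these new coordinates $\Supp P\subset\Z^s\times\{0\}^{r-s}$, so $P$ involves only $\mu_1,\dots,\mu_s$; hence $\langle\mu_1,\dots,\mu_s\rangle$ is a bad f.g. subgroup of rank $s<r$, contradicting minimality.

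\emph{Step 4.} If $X=\Z^r$, then $\Lambda_0$ itself is minimally bad, and we are done. Otherwise $X$ is a full-rank sublattice of $\Z^r$ of index $m>1$, and I set $\Lambda':=\{y^\alpha:\alpha\in X\}$, a free abelian subgroup of $\Lambda_0$ of rank $r$ and index $m$. Since $\Lambda_0/\Lambda'$ is finite, $\F(\Lambda_0)$ is algebraic over $\F(\Lambda')$, so $\trdeg\Lambda'=d=r-1$. The crux is to prove $X(\Lambda')=\Z^r$. Because $\Supp P\subset X$, one has $P\in\F[\Lambda']\cap I_1(\Lambda_0)=I_1(\Lambda')$. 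Any factorization $P=aQ$ in $\F[\Lambda']$ remains valid in $\F[\Lambda_0]$, and the irreducibility of $P$ in the UFD $\F[\Lambda_0]$ forces $a$ or $Q$ to be a monomial there, which necessarily has exponent in $X$ and so is a unit of $\F[\Lambda']$. Hence $P$ is irreducible in $\F[\Lambda']$, and as $I_1(\Lambda')$ is a height-$1$ prime in the UFD $\F[\Lambda']$, we conclude $(P)=I_1(\Lambda')$. Finally, under the $\Lambda'$-coordinates provided by $\beta_1,\dots,\beta_r$, the set $\Supp P$ generates the whole of $\Z^r$, so $X(\Lambda')=\Z^r$ and $\Lambda'$ is minimally bad. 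The main obstacle is precisely this identification $(P)=I_1(\Lambda')$: it requires transferring irreducibility and the height-$1$ prime structure through the ring inclusion $\F[\Lambda']\subset\F[\Lambda_0]$ of UFDs of the same Krull dimension.
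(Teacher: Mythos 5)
Your proof is correct and follows essentially the same route as the paper: after passing to a free f.g.\ bad subgroup of minimal rank $r$ (so $r=1+\trdeg$), the subgroup $\Lambda'$ you build in Step 4 from the lattice $X=\langle\Supp P\rangle$ is exactly the paper's $\Lambda'=\langle x^\alpha\mid\alpha\in\Supp P\rangle$; your Steps 3--4 merely supply the verification that the paper dismisses as ``clear'' (full rank of $X$ by minimality, and $(P)=I_1(\Lambda')$ by transferring irreducibility between the two UFDs). One small repair in Step 3: a $\Z$-basis of a non-saturated sublattice $X\subset\Z^r$ of rank $s<r$ need not extend to a basis of $\Z^r$, so you should instead take a basis of $\Z^r$ adapted to $X$ (Smith normal form), which still places $\Supp P$ in the span of the first $s$ coordinates and yields the same contradiction.
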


\begin{proof} By definition, $\Lambda$ contains a f.g. bad subgroup $\Lambda_0$. Moreover, we can assume that $\Lambda_0$ is torsion free.

Let ${\cal C}$ be the set
of all  subgroups
$\Pi\subset \Lambda_0$  such that
 $\rk\,\Pi>\trdeg\,\Pi$. 
 Let us pick one element $\Pi'$ of ${\cal C}$
 of minimal rank $r$. It is clear
 that $\rk\,\Pi'=1+\trdeg\,\Pi'$.
 Let $x_1,\dots x_r$ be a basis of $\Pi'$ and
 let $P=\sum_{x\in \Z^r} a_\alpha x^{\alpha}$
 be a generator of the ideal $I_1(\Pi')$ of 
$\F[\Pi']\simeq \F[x_1^{\pm1},\dots x_r^{\pm1}]$
such that $a_0\neq 0$. 

It is clear that
$\Lambda':=\langle x^{\alpha}\mid \alpha\in \Supp\,P\rangle$
 is a minimally bad subgroup.
 \end{proof}

\bigskip
\noindent
{\it 7.3 The Newton polygone of $P_n$}

\noindent Let $r\geq 1$ be an integer.  
Let $P$ be a generator of a principal ideal 
$I$ of
$\F[x_1^{\pm1},\dots x_r^{\pm1}]$. 
The {\it Newton polygone} $\New(P)$ of $P$ is the convex closure of $\Supp\,P$ in
$\R\otimes\Z^r$, and let $\Ext(P)$ be its 
set of
 extremal points. 
Up to translation by $\Z^r$, $\Ext(P)$
is an invariant of $I$. Hence
the largest integer $e(P)$ such that
$\alpha-\beta\in e(P).\Z^r$  for any
$\alpha,\,\beta\in \Ext(P)$ only depends on $I$.

It will convenient to choose an ordering of 
$\Z^r$. A Laurent polynomial 
$P\in \F[x_1^{\pm1},\dots x_r^{\pm1}]$ is  {\it normalized} if $a_0=1$ and any 
$\alpha\in \Supp\,P$ is nonnegative. 
Any principal ideal $I$ of $\F[x_1^{\pm1},\dots x_r^{\pm1}]$ has a unique normalized generator
$P$. Since $0$ belongs to $\Ext(P)$, we have 
$\Ext(P)\subset e(P).\Z^r$.

Let $\Lambda\subset K^*$ be a minimally bad
subgroup and let $x_1,\dots,x_r$ be a basis
of $\Lambda$. For any $n\geq 1$, let $P_n$
be the normalized generator of $I_n(\Lambda)$.

\begin{lemma}\label{newton} Assume that $n\geq 1$ is not divisible by $\ch\,K$.
Then we have

\centerline{$\New(P_n)={n^{r-1}\over f_n}              \New(P_1)$,}

\noindent for some integer $f_n$ dividing 
$e(P_1)$.
\end{lemma}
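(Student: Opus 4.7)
My plan is to relate $\phi_n(P_n)$, where $\phi_n\colon \F[\Lambda]\to\F[\Lambda]$ denotes the injection $y^{\alpha}\mapsto y^{n\alpha}$, to a norm of $P_1$ under the action of $\mu_n^r$, and then read off Newton polytopes using the Minkowski-sum additivity for polynomial products. From the identity $\hat{\chi}_n = \hat{\chi}_1\circ \phi_n$ one reads off $I_n = \phi_n^{-1}(I_1)$, equivalently $\phi_n(I_n) = I_1\cap R_0^{(n)}$ where $R_0^{(n)} := \F[y_1^{\pm n},\ldots,y_r^{\pm n}]$. Because $n$ is coprime to $\ch K$, the extension $\F' := \F(\mu_n)$ is separable, and $\mu_n^r$ acts on $R_0\otimes_\F\F'$ via $\zeta\cdot y_i = \zeta_i y_i$ with invariant subring $\F'[y^{\pm n}]$.

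Introduce the norm
\[
N(P_1) := \prod_{\zeta\in\mu_n^r}P_1(\zeta y),
\]
which is $\mu_n^r$-invariant and Galois-invariant, hence lies in $\F[y^{\pm n}]$, and belongs to $I_1$. The minimally bad hypothesis $X(\Lambda)=\Z^r$ yields triviality of the stabilizer of $P_1$ (up to scalar) in $\mu_n^r$: any $\zeta$ with $\zeta\cdot P_1=cP_1$ satisfies $\zeta^{\alpha-\beta}=1$ for all $\alpha,\beta\in \Supp P_1$, hence $\zeta^{\gamma}=1$ for all $\gamma\in X(\Lambda)=\Z^r$, so $\zeta=1$. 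Consequently the $n^r$ shifts $P_1^\zeta$ are pairwise non-proportional. Since $(P_1)$ is a height-$1$ prime in the UFD $R_0$, its contraction $(P_1)\cap R_0^{(n)}$ is a height-$1$ prime in the UFD $R_0^{(n)}$, hence principal; let $\phi_n(P_n)$ be its normalized generator. Comparing $N(P_1)$ against this generator via the multiplicities of absolutely irreducible factors then gives an identity
\[
\phi_n(P_n)^{f_n} = N(P_1)
\]
for a positive integer $f_n$ (equal to $1$ when $P_1$ is absolutely irreducible).

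Now take Newton polytopes of both sides. From the substitution $y_i\mapsto y_i^n$, $\New(\phi_n(P_n)) = n\,\New(P_n)$. Since each shift $P_1^\zeta$ has the same support as $P_1$, Minkowski-additivity of Newton polytopes under polynomial products gives $\New(N(P_1)) = n^r\,\New(P_1)$. Equating $n f_n\,\New(P_n) = n^r\,\New(P_1)$ delivers the stated formula $\New(P_n) = (n^{r-1}/f_n)\,\New(P_1)$.

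For the divisibility $f_n\mid e(P_1)$: normalization places $0$ as a common vertex of $\New(P_1)$ and $\New(P_n)$, and the remaining vertices of $\New(P_1)$ lie in $e(P_1)\Z^r$; the scaling $(n^{r-1}/f_n)$ must carry them to integer vertices of $\New(P_n)$, which gives a priori only the bound $f_n\mid n^{r-1}e(P_1)$. The main obstacle is sharpening this to $f_n\mid e(P_1)$, which requires a finer analysis of how $\mu_n^r$ acts on the extremal monomials of $P_1$: since these vertices lie in a single coset of $e(P_1)\Z^r$, the $n$-th power map restricts to a well-controlled action on them, and the descent multiplicity $f_n$ arising from the comparison of $\phi_n(P_n)$ with $N(P_1)$ is forced to divide $e(P_1)$.
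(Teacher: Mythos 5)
Your reduction of the polytope identity to the norm
$N(P_1)=\prod_{\zeta\in\mu_n^r}P_1(\zeta y)$ is sound and is essentially the paper's argument in different packaging: the paper factors $P_1=Q_1\cdots Q_k$ over $\overline{\F}$ into Galois-conjugate irreducibles, forms $R=\prod_{\zeta}Q_1^{\zeta}$ and $S=\prod_{\sigma\in\Gal(\F)/G}R^{\sigma}$, and shows $P_n(x_1^n,\dots,x_r^n)=S$; your $N(P_1)$ is exactly $S^{f_n}$ with $f_n=[G:G_1]$, and the Minkowski-additivity computation then matches. So the scaling $\New(P_n)=\frac{n^{r-1}}{f_n}\New(P_1)$ for \emph{some} positive integer $f_n$ is fine.

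The genuine gap is the divisibility $f_n\mid e(P_1)$, which you flag as ``the main obstacle'' and then assert is ``forced'' without an argument. Your polytope-scaling considerations cannot close this: as you concede, they only give $f_n\mid n^{r-1}e(P_1)$, and since $n$ is arbitrary this is useless for the application (Lemma~\ref{disjointness} needs $f_n$ and $f_m$ to divide a \emph{fixed} integer $e$ independent of $n,m$ in order to conclude $n=m$ from $n^{r-1}/f_n=m^{r-1}/f_m$). The missing mechanism lives in the absolute factorization of $P_1$, not in the geometry of $\New(P_n)$: with $G_1=\Stab_{\Gal(\F)}(Q_1)$ and $G=\Stab_{\Gal(\F)}(R)$ one has $f_n=[G:G_1]$, which divides $k=[\Gal(\F):G_1]$, the number of absolutely irreducible factors of $P_1$; and $k\mid e(P_1)$ because the $Q_i$, being Galois conjugates, all have the same support, so $\New(P_1)=k\,\New(Q_1)$ with $\New(Q_1)$ a lattice polytope, forcing all differences of extremal points of $\New(P_1)$ into $k\Z^r$. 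Note that this identification of $f_n$ is already implicit in your own multiplicity count ($N(P_1)$ has $kn^r$ absolutely irreducible factors with multiplicity, distributed over $n^r[\Gal(\F):G]$ distinct ones), so the repair is available inside your framework --- but it must be made, and it goes through the factors $Q_i$ of $P_1$, which your divisibility discussion never invokes.
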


\begin{proof} In 
$\overline{\F}[x_1^{\pm1},\dots x_r^{\pm1}]$
the polynomial $P_1$ decomposes uniquely as

\centerline{$P_1=Q_1\dots Q_k$}

\noindent where  $Q_1,\,Q_2,\dots$ are normalized irreducible polynomials in 
${\overline\F}[x_1^{\pm1},\dots x_r^{\pm1}]$. Since they are permuted by $\Gal(\F)$, we have
$\Supp Q_1=\Supp Q_2...$, hence

(i) $\Supp Q_1$ generates $\Z^r$, and

(ii) $\New(Q_1)={1\over k}\New (P_1)$ and  
$k$ divides $e(P_1)$.

Let $\mu_n\subset {\overline \F}^*$ be the group of all $n$th root of one. 
For various
$(\zeta_1,\dots \zeta_r)\in\mu_n^r$,
the normalized polynomials
$Q_1(\zeta_1 x_1,\dots,\zeta_r x_r)$ are
pairwise distinct by the assertion (i). Thus
 the polynomial
$R=\prod_{\zeta_1,\dots,\zeta_r\in
\mu_n^r}\,Q_1(\zeta_1 x_1,\dots,
\zeta_r x_r)$ is irreducible in
$\overline{\F}[x_1^{\pm n},\dots x_r^{\pm n}]$.

Set $G_1=\{\sigma\in\Gal(\F)
\mid Q_1^\sigma=Q_1\}$ and
$G=\{\sigma\in\Gal(\F)
\mid R^\sigma=R\}$.
Since $R$ is normalized,
the polynomials $R^\sigma$ for 
$\sigma\in\Gal(\F)/G$ are pairwise distinct, and
therefore 
$S=\prod_{\sigma\in \Gal(\F)/G}\,R^\sigma$ is a
normalized  irreducible 
polynomial in 
$\F[x_1^{\pm n},\dots x_r^{\pm n}]$. 
It follows that 

\centerline{$P_n(x_1^n,\dots,x_r^n)=
S(x_1,\dots,x_r)$.}

\noindent Since $[\Gal(\F):G]=k$, the integer  
$f_n:=[G:G_1]$ divides $e(P_1)$, and

\centerline{
$\New(P_n)=1/n\,\New(S)={n^{r-1}\over f_n}\New(P_1)$.}
\end{proof}

\bigskip
\noindent
{\it 7.4 A criterion for the disjointness of
${\cal X}_n(\Lambda)$ and ${\cal X}_m(\Lambda)$}

\begin{lemma}\label{disjointness} Let $\Lambda\subset K^*$ be  a bad subgroup of $K^*$.
Then there is an integer $e\geq 1$ such that 

\centerline{${\cal X}_n(\Lambda)\cap 
{\cal X}_m(\Lambda)=\emptyset$,}

\noindent whenever the integers $n\neq m$ are coprime to $e$.
\end{lemma}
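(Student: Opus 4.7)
By Lemma \ref{minimal}, pick a minimally bad subgroup $\Lambda'\subset\Lambda$ of rank $r$, so $\trdeg\,\Lambda'=r-1$. Restriction of characters preserves degree: if $\chi(z)=\mu(z^n)$ on $\Lambda$ for an embedding $\mu:K_n(\Lambda)\to L$, then its restriction to $\Lambda'$ has the same form using $\mu|_{K_n(\Lambda')}$, so lies in ${\cal X}_n(\Lambda')$. Hence it suffices to find $e$ that works for $\Lambda'$, and by Lemma \ref{invariant} the task reduces to showing that the normalized generators $P_n$ of $I_n(\Lambda')$ in $\F[x_1^{\pm 1},\ldots,x_r^{\pm 1}]$ are pairwise distinct for $n\neq m$ coprime to $e$.

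Set $e:=\ch\,K\cdot e(P_1)$ (reading $\ch\,K=1$ when $\ch\,K=0$) and suppose $n\neq m$ are coprime to $e$. Then $\ch\,K$ divides neither $n$ nor $m$, so Lemma \ref{newton} applies and yields
\[
\New(P_n)=\frac{n^{r-1}}{f_n}\New(P_1),\qquad \New(P_m)=\frac{m^{r-1}}{f_m}\New(P_1),
\]
with $f_n,f_m$ dividing $e(P_1)$. The polynomial $P_1$ is nonconstant, since $I_1(\Lambda')$ is proper (as $\hat\chi_1(1)=1$) and nonzero (as $\trdeg\,\Lambda'<\rk\,\Lambda'$). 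Hence $\New(P_1)$ contains at least two points, scaling is rigid, and $P_n=P_m$ forces $n^{r-1}f_m=m^{r-1}f_n$. Coprimality of $n,m$ with $e(P_1)$, together with $f_n,f_m\mid e(P_1)$, gives $\gcd(n,f_n)=\gcd(m,f_m)=1$; divisibility then yields $n^{r-1}\mid m^{r-1}$ and, by symmetry, $n^{r-1}=m^{r-1}$. For $r\geq 2$ this contradicts $n\neq m$, as desired.

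The main obstacle is the degenerate case $r=1$, where $n^{r-1}\equiv 1$ and the Newton-polygon scaling collapses. In that case $\trdeg\,\Lambda'=0$ forces $\ch\,K=0$ (since $\bar\F_p^*$ is torsion while $\Lambda'$ is free), so $\Lambda'=\langle y\rangle$ with $y\in\bar\Q^*$ of infinite order, and $P_n$ is the normalized minimal polynomial of $y^n$ over $\Q$. Equality $I_n(\Lambda')=I_m(\Lambda')$ is then equivalent to $y^m=y_i^n$ for some Galois conjugate $y_i$ of $y$. Because $y$ has infinite order, the relation module of any pair $\{y,y_i\}\subset\bar\Q^*$ has rank at most one in $\Z^2$; each $y_i$ thus contributes at most one primitive relation $y^{a_i}y_i^{b_i}=1$, so the bad pairs $(n,m)$ lie on finitely many one-parameter lines $(n,m)=t(b_i,|a_i|)$. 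Enlarging $e$ by the finite product of the nonzero $|a_i|$ and $b_i$ excludes all such pairs and finishes the proof.
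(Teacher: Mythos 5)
Your proof is correct and follows essentially the same route as the paper's: reduce to a minimally bad subgroup via Lemmas \ref{minimal} and \ref{invariant}, then compare the Newton polygons of the normalized generators $P_n$ via Lemma \ref{newton} when $\rk\,\Lambda'\geq 2$. The only divergence is the rank-one case, where the paper gets $e=1$ by iterating the Galois relation ($\lambda^{m^k}=\sigma^k(\lambda^{n^k})=\lambda^{n^k}$ once $\sigma^k$ fixes $\lambda$, forcing $n=m$), whereas you use a relation-module argument that requires enlarging $e$; both are valid, since the lemma only asks for the existence of some $e$.
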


\begin{proof} By Lemma \ref{minimal},
$\Lambda$ contains a minimally bad subgroup $\Lambda'$.
Moreover by Lemma \ref{invariant}, it is enough
to show the lemma for $\Lambda'$. Therefore
we can assume that $\Lambda$ itself is minimally bad.
For any $n\geq 1$, let $P_n$ be the normalized generator of $I_n(\Lambda)$, relative to some order of $\Lambda$.

\smallskip\noindent{\it Proof when $\rk\,\Lambda=1$.} Thus $\F=\Q$ and a  generator $\lambda$ of $\Lambda$ is
an algebraic number of infinite order. Let $n,\,m\geq 1$ be two integers
such that $P_n=P_m$. Since $P_n(\lambda^m)=0$,
there is $\sigma\in\Gal(\Q)$ such that
$\lambda^m=\sigma(\lambda^n)$. Let $k\geq 1$ be
an integer such that $\sigma^k(\lambda)=\lambda$.
Therefore, we have

\centerline{
 $\lambda^{n^k}=
 \sigma^k(\lambda^{n^k})=\lambda^{m^k}$.}
 
\noindent Since $\lambda$ has infinite order, it follows that $n=m$. In this case, the lemma is
proved for $e=1$.

\smallskip\noindent{\it Proof when 
$r:=\rk\,\Lambda\geq 2$.} Set $e=pe(P_1)$ if 
$\ch\,K=p$ and $e=e(P_1)$ ortherwise.
Let $n, \,m\geq 1$ be two  integers coprime
to $e$ such that $P_n=P_m$. By Lemma \ref{newton},
we have ${n^{r-1}\over f_n}={m^{r-1}\over f_m}$
for some integers $f_n$ and $f_m$ dividing
$e$. It follows that $n=m$.
\end{proof}

\section{A Nonlinearity Criterion for $\Aut_S\,\na_K^2$}

Let $S_0$ be a subgroup of $B(K)$. We will use
Lemma \ref{disjointness} to give 
a necessary condition for the linearity
over a field of $S_0\ltimes E(K)$. Then we derive 
a nonlinearity criterion for 
the groups $\Aut_S\,\na_K^2$.

From now on, let  $\rho:S_0\ltimes E(K)\to 
\GL(V)$ be a a given embedding, 
where $V$ is a finite-dimensional vector space over an algebraically closed field $L$. Let $W(\rho)\subset \End\,V$ be the linear subspace generated by $\rho(E(K))$.

The commutative group structure on  $E(K)$
will be denoted additively.
Indeed $E(K)$ has a natural structure of
a graded vector space over $K$, namely
$E(K)=\oplus_{n\geq 3}\,K.T_{n}$ where 
$T_n(x,y)=(x,y+x^{n-1})$.

For any $g\in B(K)$, set  $\chi_{B}(g)=\lambda$
if  $g(x,y)=(\lambda^{-1}x,\lambda y+tx)$, for some 
 $t\in K$. The group of 
all eigenvalues of elements in $S_0$ is 
 $\Lambda:=\chi_B(S_0)\subset K^*$.
 We have $gT_ng^{-1}=\chi_B(g)^n T_n$.
 Since the action of $S_0$ on $E(K)$
 factors through $\Lambda$, it follows that
 $E(K)$ and  $W(\rho)$ are $\Lambda$-modules.

\bigskip
\noindent{\it 8.1 An obvious estimate}

\noindent A integer $n\geq 1$ is called a
{\it divisor of $\Lambda$} if $\Lambda$ contains a primitive $n$th root of one. Let $d(\Lambda)$
be the number, finite or infinite, of divisors of $\Lambda$.

\begin{lemma}\label{estimate} We have 
$d(\Lambda)\leq 2+ (\dim V)^2$.
\end{lemma}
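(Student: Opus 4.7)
My plan. I will prove the bound $d(\Lambda) \le 2 + (\dim V)^2$ by combining the obvious estimate $\dim_L W(\rho) \le (\dim V)^2$ with a direct-sum decomposition of $W(\rho)$ indexed by divisors of $\Lambda$. The additive constant $2$ accounts for the divisors $n = 1$ and $n = 2$, which are not realised by any generator of $E(K) = \bigoplus_{n \ge 3} K \cdot T_n$ and must therefore be counted separately.

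Setup. For each $n \ge 3$, set $W_n := \mathrm{span}_L\,\rho(K \cdot T_n) \subset W(\rho)$. This is an $S_0$-stable subspace under the conjugation action of $\rho(S_0)$, and whenever $n$ is a divisor of $\Lambda$, the nonzero element $\rho(T_n) \in W_n$ (nonzero by injectivity of $\rho$ and the fact that $T_n \ne 1$) witnesses that $W_n \ne 0$.

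Direct-sum argument. Given finitely many divisors $n_1 < \cdots < n_r$ of $\Lambda$ with $n_i \ge 3$, set $M := \mathrm{lcm}(n_1, \ldots, n_r)$. Because $\Lambda_{\mathrm{tors}}$ is locally cyclic (every finite subgroup of $K^*$ is cyclic), $M$ is itself a divisor of $\Lambda$; fix a primitive $M$-th root of unity $\zeta \in \Lambda$ and any lift $g \in S_0$ with $\chi_B(g) = \zeta$. The conjugation action of $\rho(g)$ on $W_{n_i}$ depends only on $\zeta$ (not on the lift), via the identity $g(sT_{n_i})g^{-1} = \zeta^{n_i} s T_{n_i}$: through the additive map $\phi_{n_i}: K \to W_{n_i}$, it is precisely the $L$-linear extension of multiplication by $\zeta^{n_i} \in K$ on the parameter. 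Consequently, the eigenvalues of this operator on $W_{n_i}$ lie in the Galois orbit of $\zeta^{n_i}$ over the prime field $\F$, i.e. among the primitive $(M/n_i)$-th roots of unity in $L$. For distinct $i \ne j$ we have $M/n_i \ne M/n_j$, so these eigenvalue sets are disjoint, which forces $W_{n_i} \cap W_{n_j} = \{0\}$. Hence $\sum_i W_{n_i}$ is a direct sum in $W(\rho)$, yielding
$$
r \;\le\; \sum_{i=1}^r \dim_L W_{n_i} \;\le\; \dim_L W(\rho) \;\le\; (\dim V)^2.
$$
Letting $\{n_1, \ldots, n_r\}$ exhaust all divisors $n \ge 3$ of $\Lambda$ gives $d(\Lambda) - 2 \le (\dim V)^2$, as claimed.

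Main obstacle. The most delicate step is the eigenvalue computation on $W_{n_i}$. The cleanest route is to present $W_{n_i}$ as a $\Lambda$-equivariant quotient of $L \otimes_\F K$ (where $\Lambda$ acts on $K$ through the character $\chi_B^{n_i}$), and to use that over $L \supset \overline{\F}$, multiplication by $\zeta^{n_i}$ has eigenvalues equal precisely to the Galois conjugates of $\zeta^{n_i}$. A subsidiary technical point is verifying that the restriction of the conjugation action to $W_{n_i}$ is independent of the chosen lift $g$, which the explicit formula $\Ad(\rho(g))\phi_{n_i}(s) = \phi_{n_i}(\zeta^{n_i} s)$ provides directly.
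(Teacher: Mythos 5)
There is a genuine gap at the eigenvalue step, and it defeats the direct-sum claim. The subspace $W_{n_i}=\mathrm{span}_L\,\rho(K\cdot T_{n_i})$ always contains $\id_V$: the identity element $0\cdot T_{n_i}$ of the group $K\cdot T_{n_i}$ maps to $\id_V$, and even if you delete it, $\id_V$ still lies in the span of two distinct translates (in the model $\rho(sT_n)=\id+sN$ one has $\id=\bigl(s_2\rho(s_1T_n)-s_1\rho(s_2T_n)\bigr)/(s_2-s_1)$). Hence $1$ is an eigenvalue of $\Ad(\rho(g))$ on every $W_{n_i}$, the eigenvalue sets are never disjoint, and in fact $W_{n_i}\cap W_{n_j}\supseteq L\cdot\id_V\neq\{0\}$, so the inequality $\sum_i\dim_L W_{n_i}\leq\dim_L W(\rho)$ fails. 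The root of the problem is exactly the point you flag as the main obstacle: $\phi_{n_i}\colon s\mapsto\rho(sT_{n_i})$ is a homomorphism from $(K,+)$ to the \emph{multiplicative} group $\GL(V)$, not an additive map into $\End V$, so $W_{n_i}$ is not a $\Lambda$-equivariant quotient of $L\otimes_\F K$, and $\Ad(\rho(g))\vert_{W_{n_i}}$ is not ``multiplication by $\zeta^{n_i}$'' in any linear sense. Its spectrum typically contains $1,\zeta^{n_i},\zeta^{2n_i},\dots$ (think of $\rho(sT_n)=\exp(sN)$ with $N$ nilpotent), and in the extreme case $n_i=M$ the operator is the identity on all of $W_{n_i}$, so a single $g$ cannot separate the $W_{n_i}$ at all.

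The paper sidesteps all of this by proving a much weaker statement that still suffices: only the individual elements $t_n=\rho(T_n)$, for $n$ a divisor of $\Lambda$ with $n\geq 3$, are claimed to be linearly independent in $\End V$, the point being that $g t_n g^{-1}=t_n$ iff $\chi_B(g)^n=1$, so distinct divisors give elements with pairwise distinct centralizers in $S_0$. Linear independence of these $d(\Lambda)-2$ elements gives $d(\Lambda)-2\leq\dim_L\End V=(\dim V)^2$ directly, with no need for the full spans $W_n$ to be in direct sum. If you want to keep your spectral strategy, you must first pass to differences $\rho(sT_{n_i})-\rho(s'T_{n_i})$ (equivalently, work modulo the fixed space of the conjugation action) and then handle the residual overlaps among the nontrivial eigenvalues; as written, the argument is not correct.
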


\begin{proof}
For each  divisor $n$ with $n\geq 3$, set
$t_n=\rho (T_n)$. Since $g t_n g^{-1}=t_n$
iff $\chi(g)^n=1$, it follows that the elements
$t_n$ are linearly independant. Thus we have
$\dim\End\,V\geq d(\Lambda)-2$, from which the assertion follows.
\end{proof}

\smallskip
\noindent{\it 8.2 Unipotent representations}

\noindent 

\begin{lemma}\label{unirep}
Assume that $\rk \Lambda\geq 1$.

Then $\ch L=\ch K$ and $\rho(E(K))$ is a unipotent group.
\end{lemma}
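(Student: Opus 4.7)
The plan is to pick $\lambda\in\Lambda$ of infinite order (available since $\rk\,\Lambda\geq 1$) and choose $g\in S_0$ with $\chi_B(g)=\lambda$; set $h=\rho(g)$. For fixed $n\geq 3$ and a nonzero $a\in K$, I would study the one-parameter family $u(t):=\rho(T_n^{ta})$, $t\in K$, where $T_n^s$ denotes $(x,y)\mapsto(x,y+sx^{n-1})$. The key algebraic input is that $t\mapsto u(t)$ is a group homomorphism from $(K,+)$ to $\GL(V)$ (since $E(K)$ is additive), while the formula $g T_n^s g^{-1}=T_n^{\lambda^n s}$ translates into $h\,u(t)\,h^{-1}=u(\lambda^n t)$ in $\GL(V)$.

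The main step is showing $u(t)$ is unipotent. I would decompose $V$ into the common generalized eigenspaces of the commuting family $\{u(t)_s : t\in K\}$ of semisimple Jordan components. Because $u(s+t)_s=u(s)_s\,u(t)_s$, each nonzero summand $V_\chi$ is labeled by a character $\chi:(K,+)\to L^*$, and only finitely many $\chi$ contribute. The intertwining relation $h\,u(t)_s\,h^{-1}=u(\lambda^n t)_s$ shows that conjugation by $h$ permutes these $V_\chi$ via $\chi\mapsto \chi\circ(\lambda^{-n}\cdot)$. Since the set of contributing characters is finite, some iterate $h^k$ stabilizes each $\chi$, giving $\chi((\lambda^{-nk}-1)t)=1$ for all $t\in K$. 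As $\lambda$ has infinite order, $\lambda^{-nk}-1\neq 0$, so $(\lambda^{-nk}-1)K=K$, forcing $\chi\equiv 1$. Hence only the trivial character appears, and each $u(t)$, in particular $\rho(T_n^a)$, is unipotent. The delicate point here is ensuring the finiteness of the character set and carefully extracting the contradiction from $\lambda$ having infinite order; this is really where the hypothesis $\rk\,\Lambda\geq 1$ gets consumed.

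Once every $\rho(T_n^a)$ is known to be unipotent, the remaining conclusions follow easily. Since $E(K)$ is abelian, $\rho(E(K))$ is a commuting family of unipotent elements of $\GL(V)$, hence by simultaneous triangularization (valid over the algebraically closed $L$) is contained in the unipotent radical of a Borel subgroup: so $\rho(E(K))$ is a unipotent group. For the characteristic comparison, suppose first $\ch K=p>0$; then every $T_n^a$ satisfies $(T_n^a)^p=T_n^{pa}=1$, so $\rho(T_n^a)$ is a unipotent element of order dividing $p$. Over $L$ of characteristic different from $p$, such an element must be trivial, contradicting the injectivity of $\rho$ on $E(K)$ for $a\neq 0$; hence $\ch L=p=\ch K$. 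Conversely, if $\ch K=0$ then $T_n^a$ has infinite order for $a\neq 0$, whereas unipotent elements in characteristic $\ell>0$ have finite $\ell$-power order; this forces $\ch L=0=\ch K$.
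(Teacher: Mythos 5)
Your proof is correct and follows essentially the same strategy as the paper's: decompose $V$ into weight spaces for the abelian group $\rho(E(K))$ (via the semisimple Jordan components), observe that only finitely many characters occur, and use conjugation by an infinite-order element of $\Lambda$ to permute them and force them all to be trivial. The only differences are cosmetic: the paper works with all of $E(K)$ at once and a finite-index stabilizer $S_0'\subset S_0$ where you use powers of a single $h$ acting on one line $K.T_n$ at a time, and the paper dispatches the case $\ch K=p$ separately (via $E(K)$ being an infinite elementary abelian $p$-group) whereas your character argument runs uniformly in all characteristics.
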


\begin{proof} If $\ch K=p$, 
the assertions follow from the fact that $E(K)$ is an elementary $p$-group of infinite rank. 

We will now assume that $\ch K=0$.
Let $V=\oplus_{\chi\in\Omega}\,V_{(\chi)}$ be the 
generalized weight decomposition of
the $E(K)$-module $V$, where $\Omega$ is the set of
group homomorphisms $\chi:E(K)\to L^*$ such that
$V_{(\chi)}\neq 0$. 

Let $\chi\in\Omega$. Since $\Omega$ is finite,
the group $S_0'=\{s\in S_0\mid \chi^s=\chi\}$ has
finite index in $S_0$. There is some $s\in S_0'$
such that $\chi_B(s)$ has infinite order.
It follows that the map 
$e\in E(K)\mapsto e-ses^{-1}\in E(K)$ is invertible.
Therefore $\chi$ is
trivial and $\rho(E(K))$ is a unipotent group. 

Since $E(K)$ is torsion-free, $L$ has characteristic $0$.
\end{proof}

\smallskip
\noindent{\it 8.3 A linearity criterion for
$S_0\ltimes E(K)$}

\begin{lemma}\label{reco} Assume again that
$\rho$ is a faithful representation of
 $S_0\ltimes E(K)$. 

Then $\Lambda$ is a good subgroup of
$K^*$.
\end{lemma}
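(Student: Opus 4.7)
The plan is to prove the contrapositive: supposing $\Lambda$ is bad, I will produce infinitely many pairwise disjoint nonzero $L$-subspaces inside the finite-dimensional space $\End V$, contradicting its finite dimension. If $\rk\,\Lambda=0$ then $\Lambda$ is torsion and vacuously good, so one may assume $\rk\,\Lambda\geq 1$; Lemma \ref{unirep} then yields $\ch L=\ch K$ and that $\rho(E(K))$ is a unipotent subgroup of $\GL(V)$. Since $\Lambda$ is bad, Lemma \ref{disjointness} furnishes an integer $e\geq 1$ such that $\mathcal{X}_n(\Lambda)\cap\mathcal{X}_m(\Lambda)=\emptyset$ whenever $n\neq m$ are positive integers coprime to $e$.

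For each integer $n\geq 3$, I would associate a nonzero finite-dimensional $L[\Lambda]$-submodule $W_n\subset\End V$ all of whose $\Lambda$-weights lie in $\mathcal{X}_n(\Lambda)$. In characteristic zero, set $\ell_n(\lambda):=\log\rho(\lambda T_n)$; since $\rho(E(K))$ is commutative unipotent, $\ell_n:K\to\End V$ is a $\Q$-linear injective map, and $W_n:=L\cdot\ell_n(K)$ is a finite-dimensional nonzero $L$-subspace containing $\ell_n(1)$. The identity $gT_ng^{-1}=\chi_B(g)^n T_n$, together with the fact that $U(K)\cap S_0$ commutes with $E(K)$, shows that the $S_0$-conjugation action on $W_n$ factors through $\Lambda$ and satisfies $\mu\cdot\ell_n(\lambda)=\ell_n(\mu^n\lambda)$ for $\mu\in\Lambda$.

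The crucial step is then to identify the $\Lambda$-weights of $W_n$ as semi-algebraic of degree $n$. The action on $\ell_n(K)\cong K$ is multiplication by $\mu^n$, which by the very definition of $I_n(\Lambda)$ factors through the quotient ring $\F[\Lambda]/I_n(\Lambda)\cong\F[\Lambda^n]$, a domain whose fraction field is $K_n$. Consequently the $L[\Lambda]$-action on $W_n$ factors through $L\otimes_{\F}\bigl(\F[\Lambda]/I_n(\Lambda)\bigr)$, and any weight character $\chi:\Lambda\to L^*$ appearing in $W_n$ extends to an $L$-algebra homomorphism of this ring into $L$; such a map factors through $K_n$ via a field embedding $\sigma:K_n\hookrightarrow L$, giving $\chi(\mu)=\sigma(\mu^n)\in\mathcal{X}_n(\Lambda)$.

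To conclude, the disjointness from Lemma \ref{disjointness} forces $W_n\cap W_m=0$ for distinct $n,m$ coprime to $e$; decomposing each $W_n$ into generalized $\Lambda$-weight spaces over the algebraically closed field $L$ then shows that $\sum_{n\text{ coprime to }e}W_n$ is direct, producing infinitely many nonzero subspaces inside the finite-dimensional $L$-space $\End V$, which is absurd. The main obstacle will be positive characteristic, where the $\log$-based construction of $W_n$ is unavailable; there I would replace $W_n$ by the Lie algebra of the Zariski closure $\overline{\rho(KT_n)}\subset\GL(V)$, a commutative unipotent algebraic subgroup, and use Zariski density to transport the $\mu\mapsto\mu^n$ action on the parameter $K$ to the Lie algebra and thereby recover the factorization through $K_n$ needed in the central step.
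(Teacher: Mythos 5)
Your overall strategy coincides with the paper's: argue by contraposition, reduce to $\rk\,\Lambda\geq 1$, invoke Lemma \ref{unirep} for unipotence, use $\log$ in characteristic zero to turn $K.T_n$ into an $\F[\Lambda]$-submodule of $\End V$, and contradict Lemma \ref{disjointness}. But two steps do not hold up. The central one is the claim that \emph{every} weight character $\chi$ appearing in $W_n$ "factors through $K_n$ via a field embedding". The action does factor through $L\otimes_\F\bigl(\F[\Lambda]/I_n(\Lambda)\bigr)$, but $\F[\Lambda]/I_n(\Lambda)\simeq\F[\Lambda^n]$ is a domain, not a field (think of $\F[t^{\pm 1}]$), so an $\F$-algebra homomorphism from it to $L$ may have nonzero kernel and then does \emph{not} extend to the fraction field $K_n$. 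All you actually obtain is $\Ker\,\hat\chi\supset I_n(\Lambda)$, whereas Lemma \ref{invariant} requires equality to conclude $\chi\in\mathcal{X}_n(\Lambda)$. Hence neither $W_n\cap W_m=0$ nor the directness of $\sum_n W_n$ is justified. The paper gets around this by showing only that \emph{at least one} weight of the relevant module lies in $\mathcal{X}_n(\Lambda)$: the intersection of the finitely many annihilators $I_\chi$ of the generalized weight spaces equals $I_n(\Lambda)$, which is prime, so $I_\chi=I_n(\Lambda)$ for some $\chi$, and taking radicals gives $\Ker\,\hat\chi=I_n(\Lambda)$. The contradiction is then purely a counting one — the finite set of generalized weights of $W(\rho)$ would have to meet infinitely many pairwise disjoint sets $\mathcal{X}_n(\Lambda)$ — and needs no direct sum. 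Your argument becomes correct once weakened in this way.

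The second gap is the positive characteristic case, which your sketch does not settle. In characteristic $p$ there is no logarithm and no natural homomorphism from the abstract group $\rho(K.T_n)$ to the Lie algebra of its Zariski closure, so "transporting the $\mu\mapsto\mu^n$ action by Zariski density" does not produce an additive, injective, $\Lambda$-equivariant map from (a nonzero subgroup of) $K.T_n$ into a subquotient of $\End V$ — which is exactly what the weight analysis requires. The paper's substitute is the map $\theta(a)=1-\rho(a)$: it is not additive on $W(\rho)$, but because $\theta(a)\theta(b)=\theta(a)+\theta(b)-\theta(a+b)$ and the nonunital algebra $M$ generated by $\theta(E_n)$ satisfies $M^p=0$, one can choose $m<p$ with $Y:=\theta^{-1}(M^m)\neq\{0\}$ and $\theta^{-1}(M^{m+1})=\{0\}$, so that $\overline\theta:Y\to W(\rho)/M^{m+1}$ is an injective $\F_p[\Lambda]$-module map. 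This is also why the paper's statement is phrased in terms of a subquotient $W(\rho)/W'$ rather than a subspace of $\End V$. Without an ingredient of this kind your proposal does not go through when $\ch\,K=p$.
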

 
\begin{proof} Since any zero-rank subgroup
of $K^*$ is good,  we can assume that 
$\rk\,\Lambda\geq 1$. For $n\geq 3$, set
$E_n=K.T_n$. By
Lemma \ref{unirep}, $K$ and  $L$ 
have the same ground field $\F$ and,
$\rho(E_n)$ is a unipotent group.

{\it We claim that, for 
any $n\geq 3$,
there is a $L[\Lambda]$-submodule $W'\subset W(\rho)$ such that $W(\rho)/W'$  contains  a 
$\F[\Lambda]$-submodule 
$X$ isomorphic to $\F[\Lambda]/I_n(\Lambda)$.}

\smallskip\noindent
{\it Proof  for $\F=\Q$.} 
In that case $W'=\{0\}$ and  $X:=\log\rho(\Q[\Lambda].T_n)$
is a $\Q[\Lambda]$-submodule of $W(\rho)$ isomorphic to
$\Q[\Lambda]/I_n(\Lambda)$.

\smallskip\noindent
{\it Proof for $\F=\F_p$.} 
As a substitute for the $\log$,
set $\theta(a)=1-\rho(a)$ for  $a\in E_n$.  
Let $M\subset W(\rho)$ be the 
linear space generated by $\theta(E_n)$. 
Since 

\centerline{$\theta(a)\theta(b)=
\theta(a)+\theta(b)-\theta(a+b)$,}

\noindent 
$M$ is a nonunital algebra and $M^p=\{0\}$.
Since $\rho$ is injective, we have $\theta^{-1}(M^p)=\{0\}$. Thus there exists a unique integer
$m<p$ such that

\centerline{$Y:=\theta^{-1}(M^m)\neq \{0\}$ but
$\theta^{-1}(M^{m+1})= \{0\}$.}

Set $W'=M^{m+1}$. It follows from the previous formula that
$Y$ is a subgroup of $E_n$ and the induced map
$\overline\theta:Y\to W(\rho)/W'$ is additive.
Thus $\overline\theta$ is a homomorphism of 
$\F_p[\Lambda]$-modules.
Since $\Ker\, \overline\theta$ is trivial, 
$\overline\theta$ is injective.
Any cyclic $\F_p[\Lambda]$-sumodule  of $Y$ is isomorphic to $\F_p[\Lambda]/I_n(\Lambda)$, therefore $W(\rho)/W'$ contains 
a $\F_p[\Lambda]$-submodule 
$X$ isomorphic to $\F_p[\Lambda]/I_n(\Lambda)$.

\smallskip
{\it We claim now that, for any $n\geq 3$, 
$W(\rho)_{(\chi)}\neq 0$ for some
$\chi\in{\cal X}_n(\Lambda)$.}
 
 \noindent Let $Z$ be the $L$-vector space generated by $X$, and let
$Z=\oplus_{\chi\in\Omega_Z}\,Z_{(\chi)}$ be the decomposition of the
$L[\Lambda]$-module $Z$ into generalized weight spaces, where $\Omega_Z$ is the set of 
group homomorphisms $\chi:\Lambda\to L^*$ 
such that $Z_{(\chi)}\neq 0$.
For each $\chi\in\Omega_Z$, let
$I_{\chi}$ be the annihilator in $\F[\Lambda]$ of
$Z_{(\chi)}$. It follows that

\centerline{$I_n(\Lambda)=\cap_{\chi\in\Omega_Z}\,I_\chi$.}

Since $\Omega_Z$ is finite and $I_n(\Lambda)$ is a prime ideal, we have $I_n(\Lambda)=I_\chi$, for
some $\chi\in\Omega_Z$. Moreover the radical of
$I_{\chi}$ is $\Ker\hat\chi$, hence $I_n(\Lambda)=\Ker\hat\chi$. Thus by Lemma
\ref{invariant}, $\chi$ is a semi-algebraic character of degree $n$. Moreover we have
$W(\rho)_{(\chi)}\neq 0$, what proves the claim.

\smallskip\noindent
{\it End of the proof.} Assume otherwise, namely that 
$\Lambda$ is a bad subgroup of $K^*$.
Then by Lemma \ref{disjointness}, there is an infinite set $T$ of integers $n\geq 3$ such that
the family 
$({\cal X}_n(\Lambda ))_{n\in T}$ consists
of mutually disjoint sets. This would contradict
that the finite set of  generalized weights of  
$W(\rho)$ intersects each of them.
\end{proof}

\smallskip\noindent
{\it 8.4 Proof of the Nonlinearity Criterion}

\noindent
Let $S$ be a subgroup of $\SL(2,K)$. For
$\delta\in\P^1_K$,  let
$\Lambda_\delta\subset K^*$ be the subgroup
of all eigenvalues of elements $g\in S_\delta$.

\begin{NLcrit} Assume one of the following two hypotheses

(i) $\Lambda_\delta$ is  a bad subgroup of $K^*$ for some $\delta\in \P^1_K$, or

(ii) the function $\delta\in \P^1_K\to
d(\Lambda_\delta)$ is unbounded.

Then the group $\Aut_S\,\na^2_K$ is not linear, even over a ring. 
\end{NLcrit}

\begin{proof} By contraposition, we will assume that $\Aut_S\,\na^2_K$ is  linear
over a ring, and we will show that neither 
Assertion (i) nor Assertion 
(ii) holds.

By Lemma \ref{mixed},
$\Aut_S\,\na^2_K$ is the mixed product
$\Gamma:=S\ltimes\,
*_{\delta\in  \P^1_K} E_\delta(K)$.
Since $
\cap_{\delta\in\delta}\,S_\delta
\subset\{\pm1\}$ and
$T_3^{-\id}=-T_3$, it follows that
$\Core_\Gamma(\cap_{\delta\in\delta}\,S_\delta)$ is trivial. Hence by
Corollary \ref{criterion2}, $\Aut_S\,\na^2_K$ is  linear over a field. Let
$\rho: \Aut_S\,\na^2_K\to \GL(n,L)$ be an injective
homomorphism, for some algebraically closed field $L$ and some positive integer $n$.

Since $\rho$ provides a faithful representation of  $B_\delta\ltimes E_\delta(K)$, it follows from
Lemma \ref{reco} that $\Lambda_\delta$ is 
a good subgroup of $K^*$. Moreover
by Lemma \ref{estimate}, we have $d(\Lambda_\delta)\leq 2+n^2$. 

Therefore neither 
Assertion (i) nor Assertion 
(ii) holds.
\end{proof}

\smallskip\noindent
{\it 8.5 Proof of the Theorem A.1 of the introduction.}

\begin{MainA1} For any infinite field,
$\SAut_0\,\na^2_K$ is not linear, even over a field.
\end{MainA1}
 
\begin{proof} With the previous notations, we have 
$\Aut_0\,\na^2_K=\Aut_{\SL(2,K)}\,\na^2_K$,
and $\Lambda_\delta=K^*$, for any 
$\delta\in\P^1_K$. 
Therefore it is enough to  check that $K^*$  satifies one
of the two assertions of the 
Nonlinearity Criterion.

If $K$ is an infinite subfield of 
$\overline\F_p$, then $d(K^*)$ is infinite.
Otherwise $K$ contains $\Q$ or 
a transcendental element $t$.  For
the subgroup $\Lambda:=\langle 2\rangle$
in the first case or $\Lambda:=\langle t,t+1\rangle$ in the second case, it is clear that
$\rk\,\Lambda>\trdeg\,\Lambda$.
Hence $K^*$ itself is  a bad group.
\end{proof}

\noindent{\it 8.6 Comparison with Cornulier's Theorem}

\noindent Let   $G_{\mathrm{Cor}}$ be the
group of all automorphisms of $\na_\C^2$ of the form

\centerline{$(x,y)\mapsto (x+u, y+f(x))$,
for some $u\in \C$ and $f(t)\in \C[t]$.}

\noindent The group  $G_{\mathrm{Cor}}$ is locally nilpotent but not nilpotent \cite{Co}, 
hence

\begin{Cornu} Neither $G_{\mathrm{Cor}}$ nor 
$\Aut\,\na_\C^2$ is  linear over a field.
\end{Cornu}

\noindent 
Set $R:=\C[[t]]\oplus I$, where 
$I:=\C((t))/\C[[t]]$ is a  square-zero ideal.
Let $\Gamma$ be the subgroup of $\SL(2,R)$
generated by the matrices

\centerline{
$\begin{pmatrix} 1+t&0\\ 0&(1+t)^{-1}
\end{pmatrix}$ and
$\begin{pmatrix} 1&0\\ a&1
\end{pmatrix}$ when $a$ runs over $I$.}

\noindent The group $\Gamma$ is isomorphic to
 $G_{\mathrm{Cor}}\simeq \C\ltimes \C[x]$,
 where $\C$ acts by translation on $\C[x]$,
 hence $G_{\mathrm{Cor}}$ is linear over $R$.
Similarly, the subgroup $\SElem_0(K)$ is isomorphic to
$K^*\ltimes x^2K[x]$, where $K^*$ acts on
$x^2K[x]$ by multiplication. Since
$\SElem_0(K)$ embeds into
$\prod_{n\geq 2}\,(K^*\ltimes Kx^n)$, it is a subgroup of $\GL(2,K^\infty)$. 

Therefore both $G_{\mathrm{Cor}}$ and 
$\SElem_0(K)$ (for $K$ infinite) are linear over some rings but not over a field. This explains the motivation of Section 3.

\section{Two Linearity Criteria for $\Aut_S\,\na_K^2$}

For a subgroup $S$ of $\SL(2,K)$,
there are two linearity criteria for $\Aut_S\,\na_K^2$. The second one, stronger, is only proved  for a field $K$ of characteristic zero.

 Let $\Lambda\subset K^*$ be a subgroup. 
 For any $\F[\Lambda]$-module $M$
 and any $n\geq 1$, set 
 $M^{(n)}=(\rho_n)_*\,M$, where
 $\rho_n$  is the group 
 homomorphism 
 $x\in\Lambda\mapsto x^n\in\Lambda$.

\bigskip\noindent
{\it 9.1 The standard module for torsion-free 
good subgroups of $K^*$} 

\noindent  Assume now that
$\Lambda$ is a torsion-free good subgroup of
$K^*$.  By definition,
the {\it standard $\F[\Lambda]$-module} is
$K_1$, and it is denoted by $\St(\Lambda)$. 

Given a $K$-vector space $V$ and an integer
$n\geq 1$, it is clear that $V^{(n)}$ is
a direct sum of standard modules, and its multiplicity is the cardinal

\centerline{
$[V^{(n)}:\St(\Lambda)]=
[\Lambda:\Lambda^n]\, \dim_{K_1}V
=\dim_{K_n}\,V$.}

\bigskip\noindent
{\it 9.2 The First Linearity Criterion}

\noindent
Let $S$ be a subgroup of $\SL_S(2,K)$ and
let $\Lambda_\delta$ be the set of all
eigenvalues of $S_\delta$, for any $\delta\in\P^1_K$. Set $\SL_S(2,K[t]):=\{G\in \SL(2,K[t])\mid G(1)\in S\}$.

\begin{Lcrit}\label{Lcrit} Assume that
$\Lambda_\delta$ is a torsion-free good subgroup of $K^*$, for any $\delta\in\P^1_K$.
Then, for some field extension $L$ of $K$,
there is an embedding
 
\centerline{ $\Aut_S\,\na_K^2\subset 
\SL(2,L(t))$.}

Moreover if $\rk\,\Lambda_\delta\leq \aleph_0$ 
for any $\delta\in\P^1_K$, then we have

\centerline{$\Aut_S\,\na_K^2\simeq 
\SL_S(2,K[t])\subset \SL(2,K(t))$.}
\end{Lcrit}

\begin{proof} Set $M=\Sup\,\rk\,\Lambda_\delta$, where 
$\delta$ runs over $\P^1_K$.
There exists  a field extension $L\supset K$,
which satisfies one of the following two  hypotheses

$({\cal I}_1)$\hskip 1cm $[L:K]\geq M$ if
$M>\aleph_0$, or

$({\cal I}_2)$\hskip 1cm $L=K$  if 
$M\leq\aleph_0$

\noindent It follows from Lemmas \ref{mixed} and Lemma \ref{Tits} that 

\centerline{
$\Aut_S\,\na_K^2=S\ltimes *_{\delta\in\P^1_K}\,
E_\delta(K)$, and}

\centerline{$\SL_S(2,L[t])=
S\ltimes *_{\delta\in\P^1_L}\,U_\delta(zL[z])
\supset 
S\ltimes *_{\delta\in\P^1_K}\,U_\delta(zL[z])$.}

\noindent Let $\delta\in \P^1_K$. The 
$\F[\Lambda_\delta]$-modules $E_\delta(K)$ and
$U_\delta(zL[z])$ are copies of standard 
$\F[\Lambda]$-module. Since
$E_\delta(K)\simeq\oplus_{n\geq 3} K^{(n)}$, we have
 
\centerline{
$[E_\delta(K):\St(\Lambda)]=\sum_{n\geq 3}\,
[\Lambda:\Lambda^n][K:K_1]$.}

\noindent On the other hand, 
$U_\delta(zL[z])$ is 
isomorphic to $\aleph_0$ copies of $L^{(2)}$, 
therefore we have

\centerline{
$[U_\delta(zL[z]):\St(\Lambda)]=\aleph_0\,
[\Lambda:\Lambda^2][L:K][K:K_1]$.}

\noindent Hence
$({\cal I}_1)$ implies the existence of a 
$S_\delta$-equivariant embedding

\centerline{$\psi_\delta:
U_\delta(zL[z])\to E_\delta(K)$,}

\noindent and $({\cal I}_2)$ implies the existence
a $S_\delta$-equivariant isomorphism

\centerline{$\psi_\delta:U_\delta(zK[z])\to E_\delta(K)$.}

Therefore by Lemma \ref{universal},
$({\cal I}_1)$ implies the existence of a 
an embedding

\centerline{
$\Aut_S\,\na_K^2\subset 
S\ltimes *_{\delta\in\P^1_K}\,U_\delta(zL[z])
\subset \SL_S(2,L[t])\subset
\SL(2,L(t))$, }

\noindent and $({\cal I}_2)$ implies the existence
an isomorphism

\hskip0.5cm
$\Aut_S\,\na_K^2\simeq
S\ltimes *_{\delta\in\P^1_K}\,U_\delta(zK[z])
\simeq \SL_S(2,K[t])\subset\SL(2,L(t))$.
\end{proof}

\begin{MainD}\label{mainD} Let $R$ be a f.g. subring of $K$ and let 
$S\subset\SL(2,R)$. 

If $\rk\,\Lambda_\delta=\trdeg\,\Lambda_\delta$
for any $\delta\in\P^1_K$, then
$\Aut_S\,\na_K^2$ is linear  over 
$K(t)$.

Otherwise, $\Aut_S\,\na_K^2$ is  not linear,
even over a ring.
\end{MainD}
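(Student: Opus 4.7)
The plan is to deduce Theorem~D by combining the Nonlinearity Criterion of Section~8 with the First Linearity Criterion of Section~9, after a small amount of preparatory work on $\Lambda_\delta$. The crucial preliminary input is Roquette's theorem: since $R$ is a f.g.\ commutative ring, $R^*$ is f.g., so each $\Lambda_\delta\subset R^*$ is itself f.g.\ of finite rank.

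For the negative clause, I would assume $\rk\,\Lambda_\delta>\trdeg\,\Lambda_\delta$ for some $\delta\in\P^1_K$. Since $\Lambda_\delta$ is f.g., it is its own witnessing f.g.\ subgroup with $\rk>\trdeg$, so is a bad subgroup of $K^*$ by definition. Hypothesis~(i) of the Nonlinearity Criterion then gives immediately that $\Aut_S\,\na_K^2$ is not linear, even over a ring.

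For the positive clause, assume $\rk\,\Lambda_\delta=\trdeg\,\Lambda_\delta$ for every $\delta$. I would first verify that each $\Lambda_\delta$ is a good subgroup of $K^*$: picking lifts $x_1,\dots,x_r$ of a $\Z$-basis of the torsion-free part of $\Lambda_\delta$, the equality $\trdeg\,\Lambda_\delta=r$ forces the $x_i$ algebraically independent over $\F$; then for any f.g.~$\Lambda'\subset\Lambda_\delta$, a basis $y_1,\dots,y_s$ of its free part satisfies $y_i^N=x^{\alpha_i}$ with the $\alpha_i$ linearly independent in $\Z^r$, so the $y_i^N$, and hence the $y_i$, are algebraically independent, giving $\rk\,\Lambda'=\trdeg\,\Lambda'$. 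To match the torsion-free hypothesis of the First Linearity Criterion, I would pass to a finite-index normal subgroup $S^\circ\lhd S$ such that every $\Lambda^\circ_\delta:=\chi_B(S^\circ_\delta)$ is torsion-free: the torsion of $\Lambda_\delta$ arises only from diagonalizable torsion of $S_\delta$, whose eigenvalues lie in the finite prime-to-$\ch\,K$ group of roots of unity of $R$, so Selberg's lemma in characteristic zero (or its refinement in finite characteristic, killing all prime-to-$p$ torsion) produces an $S^\circ$ uniformly in~$\delta$.

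Each $\Lambda^\circ_\delta$ then has finite index in $\Lambda_\delta$, hence the same rank and transcendence degree, and inherits goodness. Applying the First Linearity Criterion to $S^\circ$, with $\rk\,\Lambda^\circ_\delta\le\aleph_0$, will yield
\[
\Aut_{S^\circ}\,\na_K^2 \simeq \SL_{S^\circ}(2,K[t]) \hookrightarrow \SL(2,K(t)).
\]
Lemma~\ref{mixed} implies $\Aut_S\,\na_K^2/\Aut_{S^\circ}\,\na_K^2\simeq S/S^\circ$, which is finite, so inducing this faithful representation from $\Aut_{S^\circ}\,\na_K^2$ up to $\Aut_S\,\na_K^2$ produces a faithful representation of $\Aut_S\,\na_K^2$ over $K(t)$, completing the proof. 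The main obstacle is the uniform torsion reduction: a pointwise Selberg argument would give a different subgroup for each $\delta$, and one must exploit the restrictive form of diagonalizable torsion in $\SL(2,R)$, namely that its eigenvalues are controlled by $R$ alone, to obtain a single $S^\circ$ that simultaneously works for every $\delta\in\P^1_K$.
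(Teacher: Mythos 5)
Your argument follows essentially the same route as the paper's proof: the negative clause is immediate from the Nonlinearity Criterion, and the positive clause passes to a finite-index congruence (Selberg-type) subgroup that kills the torsion of the $\Lambda_\delta$ uniformly in $\delta$, checks that each $\Lambda_\delta$ is a f.g.\ torsion-free good subgroup via Roquette's finiteness of unit groups, applies the First Linearity Criterion, and returns to $S$ by inducing across the finite-index extension $\Aut_{S^\circ}\,\na_K^2\subset\Aut_S\,\na_K^2$. The one imprecision is the claim $\Lambda_\delta\subset R^*$: the eigenvalues of $g\in S_\delta$ satisfy $\lambda^2-\Tr(g)\lambda+1=0$ and in general lie only in a quadratic extension of the fraction field of $R$, so one must work with the unit group of the integral closure $\overline{R}$ of $R$ there (still f.g.\ by Roquette), which is exactly what the paper does.
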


\begin{proof} The second assertion follows from
the Nonlinearity Criterion.

In order to prove the first one, assume now
that $\rk\,\Lambda_\delta=\trdeg\,\Lambda_\delta$
for any $\delta\in\P^1_K$.
Let $L$ be the field of fraction of
$R$ and set $F={\overline \F}\cap L$.
Since $R$ is a f.g. ring, $F$ is a finite
extension of $\F$. Therefore the set
$\mu\subset K$ of roots of unity in $F$ or in a quadratic extension of $F$ is finite.

Let $\bf m$ be a maximal ideal of $R$
such that $p:=\ch\,R/{\bf m}$ is
coprime to $\Card\,\mu$. Indeed if $\F=\Q$ 
the characteristic of $R/{\bf m}$ is
arbitrarily large, while in the opposite case, 
$\ch\,R/{\bf m}$ is
automatically coprime to $\Card\,\mu$.

Set $S'=\{g\in S\mid g\equiv\id \mod {\bf m}\}$.
For any $\delta\in P^1_K$ let $\Lambda'_\delta$ be the eigenvalues of $S'_\delta$.
Since $S'$ is residually $p$-group,
$\Lambda_\delta'$ is torsion-free.
Since $[S:S']<\infty$, we can assume that $S=S'$.

We claim that $\Lambda_\delta$ is f.g.
for any $\delta\in\P^1_K$. If $\delta$ is
defined over a field $F'$, where
$F'=F$ or $F'$ is
a quadratic extension of $F$, 
then $\Lambda_\delta\subset \overline{R}^*$, where
$\overline{R}$ is the integral closure of $R$ in
$F'$. Since $\overline{R}^*$ is f.g., so is
$\Lambda_\delta$. Otherwise,
$\Lambda_\delta$ is trivial.
Hence $\Lambda_\delta$ is a torsion-free
good subgroup of
$K^*$ for any $\delta\in\P^1_K$. 

The first Linearity Criterion implies that
$\Aut_S\,\na_K^2$ is linear  over $K(t)$.
\end{proof}

\bigskip\noindent
{\it 9.3 The standard modules for finite-torsion 
good subgroups of $K^*$} 

\noindent From now on, $K$ is a field of characteristic zero. 
Let $\Lambda$ be a good subgroup of $K^*$, such that $\Card\,\Lambda\cap \mu_\infty=n$ for some $n<\infty$.

The $\Q[\Lambda]$-module $\St_d(\Lambda):=K_d$,
where $d$ is a divisor of $n$, are called the
{\it standard $\Q[\Lambda]$-modules}.
By Baer Theorem \cite{Ba}, $\Lambda$ is isomorphic
to $\mu_n\times {\overline\Lambda}$,
where ${\overline\Lambda}=\Lambda/\mu_n$ is torsion-free. It follows that
$\St_d(\Lambda)\simeq 
\Q(\mu_{n/d} )\otimes \Q({\overline\Lambda}^d)$.

Given a $K$-vector space $V$ and an integer
$m\geq 1$, it is clear that $V^{(m)}$ is
a direct sum of the standard module
$\St_{\gcd(n,m)}(\Lambda)$, and its multiplicity is the cardinal

\centerline{
$[V^{(m)}:\St_{gcd(n,m)}(\Lambda)]=
[{\overline\Lambda}:{\overline\Lambda}^n]
\,\phi(n)/\phi({\gcd(n,m)})
\, \dim_{K_1}V$.}

\begin{lemma}\label{prepa} Let $S_0$ be a subgroup of $B$ such that $\chi_B(S_0)= \Lambda$ is a 
good subgroup
of $K^*$ such that $n:=\Card\,\Lambda\cap \mu_\infty$ is finite.
Let $l>n$ be a prime number
and let $L\supset K$ be a field such that
$[L:K]\geq \aleph_0 \rk(\Lambda)$. 

Then there is a $S_0$-equivariant embedding
$E(K)\subset E^{<2l}(L)$.
\end{lemma}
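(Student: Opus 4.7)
The plan is to use the decomposition $E(K)\simeq\bigoplus_{m\geq 3}K^{(m)}$ and its truncated analogue $E^{<2l}(L)\simeq\bigoplus_{m=3}^{2l-1}L^{(m)}$ as $\Q[\Lambda]$-modules, and to build a $\Q[\Lambda]$-equivariant embedding between them. Because the $S_0$-action on both sides factors through $\chi_B(S_0)=\Lambda$ (since $gT_mg^{-1}$ is multiplied by $\chi_B(g)^m$), any $\Q[\Lambda]$-equivariant map is automatically $S_0$-equivariant, so I only need to work with the $\Q[\Lambda]$-module structure. By the description of standard modules recalled just before the lemma, each $V^{(m)}$ is a direct sum of copies of the single standard module $\St_{d}(\Lambda)$, where $d=\gcd(m,n)$, with cardinal multiplicity of the form $[\overline\Lambda:\overline\Lambda^m]\cdot c_d\cdot\dim_{K_1}V$ for a positive constant $c_d$ depending only on $d$ and $n$. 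In particular, the $\Q[\Lambda]$-isomorphism type of $K^{(m)}$ depends only on~$d$.

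The first step is to select, for each divisor $d$ of $n$, a target exponent $\tilde m(d)\in\{3,\dots,2l-1\}$ with $\gcd(\tilde m(d),n)=d$; the plan will then route every $K^{(m)}$ with $\gcd(m,n)=d$ into $L^{(\tilde m(d))}$. For $d\geq 3$ I take $\tilde m(d)=d$, which lies in range since $d\leq n<l$. For $d=1$ I take $\tilde m(1)=l$ whenever $l\geq 3$, and $\tilde m(1)=3$ in the only remaining case $n=1$, $l=2$. The delicate case is $d=2$, for which I seek $\tilde m(2)=2k$ with $\gcd(k,n/2)=1$ and $k\in[2,l-1]$. Since $l$ is prime with $l>n\geq 2\cdot(n/2)$, the interval $[1,l-1]$ contains at least $\lfloor(l-1)/(n/2)\rfloor\,\phi(n/2)\geq 2\phi(n/2)\geq 2$ integers coprime to $n/2$; one of them is $1$, the other is $\geq 2$ and supplies the needed $k$.

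Next, I need, for each $d\mid n$, a $\Q[\Lambda]$-equivariant embedding of $\bigoplus_{\gcd(m,n)=d}K^{(m)}$ into $L^{(\tilde m(d))}$. Both sides are direct sums of $\St_d(\Lambda)$, so this reduces to the cardinal inequality ``source multiplicity $\leq$ target multiplicity''. On the left the total multiplicity is $c_d\,[K:K_1]\sum_{\gcd(m,n)=d}[\overline\Lambda:\overline\Lambda^m]$; this is a countable sum whose general term is at most $|\overline\Lambda|\leq\aleph_0\rk(\Lambda)$, so the whole sum is at most $\aleph_0\rk(\Lambda)$, giving a left multiplicity $\leq c_d\,[K:K_1]\,\aleph_0\rk(\Lambda)$. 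On the right the multiplicity equals $c_d\,[L:K_1]\,[\overline\Lambda:\overline\Lambda^{\tilde m(d)}]\geq c_d\,[L:K]\,[K:K_1]\geq c_d\,[K:K_1]\,\aleph_0\rk(\Lambda)$ by the hypothesis $[L:K]\geq\aleph_0\rk(\Lambda)$. Thus the source cardinality is dominated by the target, and the embedding exists. Assembling these embeddings over the divisors $d\mid n$ produces the required $S_0$-equivariant embedding $E(K)\hookrightarrow E^{<2l}(L)$.

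The main obstacle will be the combinatorial choice of $\tilde m(2)$, which rests on the primality of $l$ together with the bound $l>n$; the remainder of the argument is pure cardinal arithmetic driven by the hypothesis $[L:K]\geq\aleph_0\rk(\Lambda)$ applied to the explicit multiplicity formula for $V^{(m)}$.
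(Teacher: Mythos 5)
Your argument is correct and follows essentially the same route as the paper's proof: decompose $E(K)$ and $E^{<2l}(L)$ into standard modules $\St_d(\Lambda)$ indexed by the divisors $d$ of $n$, realize each type $d$ by a suitable exponent in the truncated range ($m=d$ for $d\geq 3$, $m=l$ for $d=1$, an even exponent for $d=2$), and compare multiplicities via the hypothesis $[L:K]\geq\aleph_0\,\rk(\Lambda)$. The only deviation is in the case $d=2$, where the paper simply uses the summand $L.T_{2l}$ --- which does lie in $E^{<2l}(L)$, since $T_m$ has degree $m-1$ and so the truncation admits all $m\leq 2l$, not just $m\leq 2l-1$ as in your reading --- whereas your counting argument locates a smaller admissible even exponent $2k$ with $\gcd(k,n/2)=1$; both choices work.
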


\begin{proof} Let $D$ be the set of divisors of $n$. The $\Q[\Lambda]$-module $E(K)$ and
$E^{<2l}(L)$ are  direct sums of standard modules, therefore we have

\centerline{$E(K)=\oplus_{d\in D}\,\St_{d}(\Lambda)^{m_d}$, and}

\centerline{$E^{<2l}(L)=\oplus_{d\in D}\,\St_{d}(\Lambda)^{n_d}$,}

\noindent where the multiplicities $m_d$ and 
$n_d$ are cardinals. Therefore it is enough to prove that $n_d\geq m_d$ for any $d\in D$.

 Since $E(K)=\oplus_{m\geq 3}\,K.T_m$, it is clear that 

\centerline{$m_d:=[E(K)(d):\St_{d}(\Lambda)]
\leq\aleph_0 \rk(\Lambda) [K:K_1]$.}

Similarly, we have 
$E^{<2l}(L)=\oplus_{3\leq m\leq 2l}\,L.T_m$.
Let $d\in D$. If  $d\geq 3$,
$L.T_d$ is a direct sum of standard modules
$\St_{d}(\Lambda)$ and it is clear that
$[L.T_d: \St_{d}(\Lambda)]\geq 
[L:K][K:K_1]\geq \aleph_0 \rk(\Lambda) [K:K_1]$,
and therefore $n_d\geq m_d$. 
Since $l$ is coprime to $n$, then  
$L.T_l$ is a direct sum of standard modules
$\St_{1}(\Lambda)$, and  we have
$n_1\geq m_1$. 

If $n$ is odd, the assertion is proved.
Otherwise, $L.T_{2l}$ is a direct sum of standard modules
$\St_{2}(\Lambda)$, and similarly we have
$n_2\geq m_2$. 
\end{proof}

\bigskip\noindent
{\it 9.4 The Second Linearity Criterion}

\noindent

\begin{Lcrit}\label{Lcrit2} Let $K$ be a field of characteristic zero. Assume that

(i) $\Lambda_\delta$ is a  good subgroup of $K^*$, for any $\delta\in\P^1_K$, and

(ii) the function $\delta\mapsto 
\Card\,\Lambda\cap \mu_\infty$ is bounded.

Then $\Aut_S\,\na_K^2$ is a linear group over
some field extension $L$ of $K$. 
\end{Lcrit}

\begin{proof} Let $L\supset K$ be a field such that $[L:K]\geq \aleph_0 M$, where
$M=\Sup \rk\,\Lambda_\delta$, and let
$Q\subset \P^1_K$ be a set of representatives of 
$\P^1_K/S_0$.  By Lemma \ref{prepa} there is
a $S_\delta$-embedding 
$\psi_\delta:E_\delta(K)\to E_\delta^{<2l}(L)$ where $l>\Max\, \Card\,\Lambda\cap \mu_\infty$ is a prime number.  Therefore we get some embeddings

\centerline{
$\Aut_S \na_K^2\simeq S\ltimes *_{\delta\in\P^1_K}\, E_\delta(K)
\subset S\ltimes *_{\delta\in\P^1_K}\, 
E^{<2l}_\delta(L)
\subset \Aut_0^{<2l} \na_L^2$.}

So, the stong version of Theorem C.2
implies  that
$\Aut_S \na_K^2$ is linear.
\end{proof}

\section{Examples of Linear or Nonlinear $\Aut_S\,\na_K^2$}

We provide three examples using the 
Linearity/Nonlinearity Criteria.

\bigskip\noindent
{\it 10.1 Example A, with $S=\SO(q)$
and $K$ infinite}

\begin{ExA} Let $q$ be a quadratic form on $K^2$ and $S=\SO(q)$. 

If $q$ is anisotropic, $\Aut_S\na^2_K$
is linear over a field extension of $K$.

Otherwise $\Aut_S\na^2_K$ is not linear, even over a ring.
\end{ExA}

\noindent As we will see, the proofs
for $\ch\,K=0$ and for $\ch\,K\neq 0$
are very different. In particular, the group $S=\SO(2,\R)$ has no subgroups of finite index and the proof for $K=\R$ cannot be reduced to the first
Linearity Criterion.

\begin{proof} If $q$ is isotropic or degenerate,
we have $\Lambda_\delta=K^*$ for some
$\delta\in\P^1_K$. The proof of Theorem A.1
shows that $K^*$ itself is bad. Hence by the Nonlinearity Criterion,
$\Aut_S\na^2_K$ is not linear, even over a ring. 

Assume now that $q$ is anisotropic.
Let $L\supset K$ be the  quadratic extension splitting $q$. Then
$\SO(q)$ is isomorphic to  
$S:=\{z\in L^*\mid N_{L/K}(z)=1\}$. Let
$S^{\infty}$ be the subgroup of all  
 $s\in S$ of order a power of $2$.

\smallskip\noindent
{\it 1. Proof for $\ch K=p$.}
We claim that $S^{\infty}$ is finite.
So we can assume that 
$\Card S^{\infty}\geq 4$. 
Since $\sqrt{-1}\in S$ and 
$S\cap K^*=\{\pm1\}$, it follows that
$L=K(\sqrt{-1})$. Therefore
$p\equiv -1\mod 4$, and 
$L=K.\F_{p^2}$. Let $s\in S^{\infty}$
of order $>2$. There is an integer $n\geq 1$
such that $s\in \F_{p^{2n}}$ where
$\F_{p^{n}}\subset K$. Since
$\F_{p^2}\not\subset K$, the integer $n$ is odd.
 Since $\Card \F_{p^{2n}}^*/\F_{p^2}^*$ is odd,
 $s$ belongs to $\F_{p^2}^*$. Hence
 $S^{\infty}\subset \F_{p^2}^*$ is finite.
 
 By Baer's theorem \cite{Ba}, we have 
$S=S^\infty\times S'$ for some subgroup $S'\subset S$. Since $S'_\delta=\{1\}$ for any $\delta\in\P^1_K$, the group
$\Aut_{S'}\,\na_K^2$ embeds into
$\SL(2,K(t))$ by the first Linearity Criterion. Since

\centerline{$[\Aut_S\,\na_K^2:\Aut_{S'}\,\na_K^2]
=[S:S']<\infty$,}

\noindent  the group $\Aut_{\SO(q)}\,\na_K^2$
is also linear over $K(t)$.

\smallskip\noindent
{\it 2. Proof for $\ch K=0$.}
Since $SO(q)_\delta=\{\pm1\}$ for any $\delta\in\P^1_K$, the group $\Aut_{\SO(q)}$ is linear
over a field extension of $K$ by the second
Linearity Criterion.
\end{proof}

 \bigskip\noindent
{\it 10.2 A preparatory lemma for the example B}

\noindent
We did not found a reference for the 
next well-known result. The given proof that
$l(\gamma_a)$ is arbitrarily large is due to
Y. Benoist. Also it is implicit in \cite{Be} that
$l(\gamma_a)$ is not constant, as pointed out by
I. Irmer. A third proof is based on the fact that 
any loxodromic representation $\Pi_g\to \PSL(2,\R)$
can be deformed to the trivial representation inside
the character variety, i.e, the space of semi-simple
complex representations $\Pi_g\to \PSL(2,\C)$.
All these proofs involves some geometric arguments.

\begin{lemma}\label{Baire} There are
cocompact  lattices 
$S\subset \SL(2,\R)$ such that
$\Tr\,\gamma$ is a transcendental number
for any infinite order element
$\gamma\in S$.
\end{lemma}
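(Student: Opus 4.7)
The plan is to use Baire category (which explains the lemma's name) in the Teichm\"uller space of a closed surface of genus $g\geq 2$, viewed as an open subset of a character variety.

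First I would fix $g\geq 2$ and let $\Pi_g$ be the fundamental group of a closed orientable surface of genus $g$. The Teichm\"uller space $\cT_g$ embeds as an open real-analytic submanifold of the character variety
\centerline{$\cX:=\Hom(\Pi_g,\SL(2,\R))/\!/\SL(2,\R)$,}
parametrizing conjugacy classes of faithful discrete representations $\rho:\Pi_g\to\SL(2,\R)$; each such $\rho$ has image a cocompact lattice. For any $\gamma\in\Pi_g$, the function $\tau_\gamma:\rho\mapsto \Tr\,\rho(\gamma)$ is a regular function on $\cX$, hence a real-analytic function on $\cT_g$.

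The key step is to show that $\tau_\gamma$ is \emph{non-constant} on $\cT_g$ whenever $\gamma$ has infinite order. This is exactly the statement credited to Benoist in the remarks preceding the lemma: for $\rho\in\cT_g$ and $\gamma\in\Pi_g$ of infinite order, $\rho(\gamma)$ is loxodromic, and its translation length $l_\rho(\gamma)=2\log|\lambda|$ (where $\Tr\,\rho(\gamma)=\lambda+\lambda^{-1}$) varies with $\rho$; concretely, one can deform $\rho$ along a Fenchel--Nielsen (or earthquake) path to make $l_\rho(\gamma)$ arbitrarily large, which forces $|\tau_\gamma|$ to be unbounded on $\cT_g$. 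Alternatively, inside the complex character variety $\Hom(\Pi_g,\SL(2,\C))/\!/\SL(2,\C)$, any loxodromic representation can be deformed to the trivial representation, so the algebraic function $\tau_\gamma$ vanishes at a point in the same irreducible component and therefore cannot be constant on $\cT_g$.

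Granting this, here is how I would finish. For each infinite order $\gamma\in\Pi_g$ and each algebraic number $\alpha\in\overline\Q$, the level set
\centerline{$Z_{\gamma,\alpha}:=\{\rho\in\cT_g\mid \tau_\gamma(\rho)=\alpha\}$}
is a proper real-analytic subset of the connected real-analytic manifold $\cT_g$ (since $\tau_\gamma$ is non-constant), hence closed with empty interior. Both $\Pi_g$ and $\overline\Q$ are countable, so
\centerline{$Z:=\bigcup_{\gamma,\,\alpha}Z_{\gamma,\alpha}$}
is a countable union of nowhere dense closed subsets of $\cT_g$. By the Baire category theorem $\cT_g\setminus Z$ is nonempty (in fact dense); picking any $\rho$ in it and setting $S:=\rho(\Pi_g)$ yields a cocompact lattice in $\SL(2,\R)$ such that $\Tr\,\gamma$ is transcendental for every infinite order $\gamma\in S$.

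The main obstacle is verifying the non-constancy of $\tau_\gamma$ cleanly; once that is in hand, the Baire argument is routine. I would rely on one of the three proofs sketched in the text (Benoist's deformation, Irmer's observation, or the character-variety deformation to the trivial representation) rather than reproving it here.
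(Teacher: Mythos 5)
Your proposal is correct and follows essentially the same route as the paper: non-constancy of the (real-analytic) trace functions on Teichm\"uller space, followed by a Baire category argument over the countably many pairs (infinite-order class, algebraic value) --- the paper merely phrases the second step via irreducible polynomials $P\in\Q[t]$ applied to $\Tr\,\tilde\rho_a(\gamma)^2$ and lifts from $\PSL(2,\R)$ to $\SL(2,\R)$ at the end using Petersson. The only substantive content of the paper's proof, the non-constancy of $a\mapsto l(\gamma_a)$ via the collar inequality $\sh(l(\gamma_a)/2)\,\sh(l(\delta_a)/2)>1$ for a transverse simple class $\delta$, is exactly the step you defer to the cited sketches, so your plan is sound but leaves the one nontrivial verification to the same sources the paper relies on.
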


\begin{proof} Let $g\geq 2$. Let
$\Sigma_g$ be the oriented Riemann surface of genus $g$, let $T(\Sigma_g)$ be its Teichm{\"u}ller space and let
$\Pi_g$ be the group presented by

\centerline{
$\langle\alpha_1,\beta_1\dots\alpha_g,\beta_g\mid
\prod_{1\leq k\leq g}\,(\alpha_k,\beta_k)=1
\rangle$.}

Let $\gamma$ be a conjugacy class in 
$\Pi_g$. Any point $a\in T(\Sigma_g)$
determines a group homorphism
$\rho_a: \Pi_g\to\pi_1(\Sigma_g)$ and
an hyperbolic metric $g_a$ on $\Sigma_g$, up to some equivalence \cite{IT}, ch.5. 
Hence $\rho_a(\gamma)$  is represented by a  unique closed $g_a$-geodesic
$\gamma_a:S^1\to\Sigma_g$, i.e. a geodesic relative to the metric $g_a$. 

In elementary terms,
$\gamma$ is represented, modulo 
$\PSL(2,\R)$-conjugacy,  by a hyperbolic element
$h_a\in\PSL(2,\R)$. The complete geodesic 
$\Gamma\subset\HP$ whose the extreme points in 
$\partial\HP$ are the fixed points of $h_a$,
is the locus of minima for the function 
$z\in\HP\mapsto d_\HP(z,h_a.z)$. The  
$g_a$-geodesic $\gamma_a$ is, up to reparametrization, the image in $\Sigma_g$
of any segment $[z,h_a.z]$ of $\Gamma$.

We claim that the length function $a\mapsto l(\gamma_a)$ is not constant. Let us pick another conjugacy class $\delta$ in $\Pi_g$ such that
$\rho_a(\delta)$ is represented by
a simple geodesic $\delta_a$ which meets
$\gamma_a$ transversally (since
$T(\Sigma_g)$ is connected, this condition is independent of $a$).
By a corollary of the collar theorem,

\centerline{$\sh\,{l(\gamma_a)\over 2}
\,\sh \, {l(\delta_a)\over 2}>1$,}

\noindent see
4.1.2 in \cite {B}. 
Since $l(\delta_a)$ can be arbitrarily small,
$l(\gamma_a)$ is arbitrarily large.

Each $a\in T(\Sigma_g)$ determines an homomorphism 
$\pi_1(\Sigma_g)\to \PSL(2,\R)$ up
to some equivalence, which induces
a homomorphism

\centerline{${\tilde\rho}_a:\Pi_g\to\PSL(2,\R)$}

\noindent
 defined up to conjugacy by $\PSL(2,\R)$.

For $g\in \PSL(2,\R)$,  
$\Tr\, g^2$ is well-defined. We have
 
\centerline{ $\Tr\,{\tilde\rho}_a(\gamma)^2=
2\ch\,l(\gamma_a)$. }

Let ${\cal I}$ be the set of irreducible
polynomials in $\Q[t]$. For $P\in {\cal I}$,
set 

\centerline{$\Omega(\gamma,P)=
\{a\in T(\Sigma_g) 
\mid P(\Tr\,{\tilde\rho}_a(\gamma)^2)\neq 0\}$.}

\noindent
 Since the function $a\mapsto P(\Tr\,{\tilde\rho}_a(\gamma)^2)$ is nonconstant and analytic,
$\Omega(\gamma,P)$ is a dense open subset of
$T(\Sigma_g)$. By the Baire Theorem

\centerline
{$\Omega:=\cap_{\gamma\neq 1, P\in{\cal I}}\,\Omega(\gamma,P)$}

\noindent is dense.
For any $a\in\Omega$, the lattice

\centerline{$S:=\{s\in \SL(2,\R)\mid s\mod {\pm 1}
\,\textnormal{belongs to}\,
{\tilde\rho}_a(\Pi_g)\}$}

\noindent satisfies the 
required condition. (Indeed 
$S\simeq \Pi_g\times \{\pm1\}$ by \cite {Pe}.)
\end{proof}

 \bigskip\noindent
{\it 10.3 Example B, where $S$ is a lattice}

\begin{ExB} 
 For some cocompact lattices 
$S\subset \SL(2,\R)$,  the group
$\Aut_S\,\na^2_\C$ is linear over $\C$.

For any lattice $S\subset \SL(2,\C)$, $\Aut_S\,\na^2_\C$ is not linear, even over a ring.

\end{ExB}

\begin{proof} Let $S\subset \SL(2,\R)$ 
be a cocompact lattice as in Lemma \ref{Baire}.
Then for any
$\delta\in\P^1_\C$, it is clear that
$\Lambda_\delta$ has rank one and contains a transcendental element, or it is finite.
Hence $\Aut_S\,\na^2_\C$ is linear over $\C$
by Theorem D.

Let  $S$ be a lattice of $\SL(2,\C)$, let 
$g\in S$ be of infinite order
and let $\delta\in\P^1_\C$ be a fixed point  of $g$. By the Garland-Raghunathan rigidity theorem 0.11 of \cite{GR},
the eigenvalues of $g$ are algebraic numbers.
Since
$\rk\,\Lambda_\delta>\trdeg\,\Lambda_\delta$,
Theorem D implies that 
$\Aut_S\,\na^2_\C$ is not linear, even over a ring.
\end{proof}

 \bigskip\noindent
{\it 10.4 A preparatory lemma for example C}

\begin{lemma}\label{rank1} Let $R$ be a prime
normal ring with fraction field $k$  and let
$m\geq 1$. Let $B$ be the integral closure of $R[t_1,\dots,t_m]$ in some quadratic extension
$L_1\subset k((t_1,\dots,t_m))$  of 
$k(t_1,\dots,t_m)$.

Then  $B^*/R^*$ is isomorphic to $\{1\}$ or $\Z$.
\end{lemma}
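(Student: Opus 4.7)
\noindent\emph{Plan of proof.}

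The plan is to prove torsion-freeness by exploiting the algebraic closedness of $k$ in $k((t_1,\dots,t_m))$, and then to bound the rank of $B^*/R^*$ by one through a divisorial argument at infinity on a projective compactification of $\Spec B$.

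First I would set $A = R[t_1,\dots,t_m]$ and $K = k(t_1,\dots,t_m)$, and denote by $\sigma$ the nontrivial $K$-automorphism of $L_1$. Standard normality arguments give $A^* = R^*$ and $B \cap K = A$, whence $B^* \cap K^* = R^*$; since $A$ is contained in the normal ring $k[[t_1,\dots,t_m]]$, integrality places $B$ in $k[[t_1,\dots,t_m]]$ as well. The field $k$ is algebraically closed in $k((t_1,\dots,t_m))$: this is classical for $m=1$ via the $t$-adic valuation, and one extends it to $m \geq 2$ by induction on the iterated Laurent series structure. As a consequence, any $u \in B^*$ with $u^n \in R^*$ is algebraic over $k$, hence lies in $k \cap B = R$, and likewise $u^{-1} \in R$, so $u \in R^*$. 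Thus $B^*/R^*$ is torsion-free.

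Next I would reduce to the case of an algebraically closed base field. The natural map $B^*/R^* \hookrightarrow \bar B^*/\bar k^*$ (with $\bar B := B \otimes_R \bar k$) is injective for the same reason: any element in the kernel lies in $B \cap \bar k \subset B \cap k = R$. Because $k$ is the constant field of $L_1$ and $L_1/K$ is separable (we work in the separable case), $L_1 \otimes_k \bar k$ is again a field, so $X := \Spec \bar B$ is a geometrically integral normal affine $\bar k$-variety, realized as a double cover of $\na^m_{\bar k}$.

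The main step is the rank bound. Let $\bar X$ be the normalization of $\P^m_{\bar k}$ in $L_1 \otimes_k \bar k$, a normal projective $\bar k$-variety containing $X$ as an open affine subset; the boundary $\bar X \setminus X$ is the reduced preimage of the hyperplane $H$ at infinity and has $r \in \{1,2\}$ irreducible components $D_1,\dots,D_r$. If $r = 1$, the deck transformation $\sigma$ fixes $D_1$, so $v_{D_1}$ is $\sigma$-invariant, giving $2\,v_{D_1}(u) = v_{D_1}(N(u)) = 0$ for every $u \in \bar B^*$; since $v_D(u) = 0$ for all divisors $D$ of $X$, $u$ is regular and nonvanishing on all of $\bar X$, forcing $u \in \mathcal{O}(\bar X)^* = \bar k^*$. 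If $r = 2$, $\sigma$ swaps $D_1$ and $D_2$, hence $v_{D_1}(u) + v_{D_2}(u) = v_{D_1}(N(u)) = 0$ for $u \in \bar B^*$; any $u$ in the kernel of $v_{D_1}$ also has $v_{D_2}(u) = 0$ and thus lies in $\bar k^*$, so $v_{D_1}$ induces an injection $\bar B^*/\bar k^* \hookrightarrow \Z$. Combined with torsion-freeness, $B^*/R^*$ must be isomorphic to $\{1\}$ or $\Z$. The main obstacle will be rigorously constructing $\bar X$ and verifying that its boundary has at most two components; this uses standard normalization theory and divisorial valuations on normal projective varieties.
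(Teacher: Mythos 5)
Your proof is correct and follows essentially the same route as the paper: compactify $\Spec B$ over $\P^m$, observe that the boundary has at most two irreducible components because the cover has degree two, and use the corresponding divisorial valuations at infinity to embed $B^*/R^*$ into $\Z$. The differences are cosmetic --- you base-change to $\bar k$ and use the norm to get $v_{D_2}(u)=-v_{D_1}(u)$, where the paper stays over $k$ and instead notes that each of $u$ and $u^{-1}$, being nonconstant, must have a pole along some boundary component; your separate torsion-freeness step is redundant once the embedding into $\Z$ is established.
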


\begin{proof} 
Since $R$ is normal, we have $B\cap k=R$, hence the
map $B^*/R^*\to (k\otimes B)^*/k^*$ is one to one. So we can assume that $R=k$.
Set $C=\Spec \, B$. There is a unique normal
compactification $\overline C$ of $C$ such that
the finite map
$C\to\na_k^m=\Spec\,k[t_1,\dots t_m]$ extends to a finite map
$\pi:\overline C\to \P_k^m$. 

Set $Z:=\pi^{-1}(\P_k^{m-1})$, where
$\P_k^{m-1}:=\P_k^m\setminus\na_k^m$.
For any irreducible divisor $D$ in
$\overline C$, let $v_D$ be the corresponding valuation.

If $Z$ is irreducible, then $v_Z(f)\leq 0$
for any $f\in B$. Hence  $v_Z(f)=0$
for any $f\in B^*$, and therefore $B^*=k^*$.

Otherwise, $Z$ is the union of two divisors
$Z_1$ and $Z_2$. For any 
$f\in B^*\setminus k^*$, either
$v_{Z_1}(f)<0$ or $v_{Z_2}(f)<0$.
Hence the homomorphism
$f\in B^*\mapsto (v_{Z_1}(f),v_{Z_2}(f))\in\Z^2$ 
embeds $B^*/k^*$ in a free $\Z$-module of rank 
$\leq 1$.
\end{proof}

\bigskip\noindent
{\it 10.5 Example C, where $\rk\,\Lambda_\delta$
is not constant}

\noindent
Let $A$ be a torsion-free additive group of any rank, let $d, m>0$ be  integers with
$d$ square-free
and let ${\cal O}$ be the ring of integers
of $k:=\Q(\sqrt{d})$. Set 
$K=k(A)((t_1,\dots,t_m))$ 
where $k(A)$ is the field of fractions of $k[A]$, and

\centerline{$S:=
\SL(2,{\cal O}[A][t_1,\dots,t_m])$.} 

\noindent The Example $C$ of the introduction is the case  $A=\Z$ and $m=1$. 

\begin{ExC} If $d<0$, then $\Aut_S\,\na_K^2$ is linear over a field extension of $K$.

Otherwise, $\Aut_S\,\na_K^2$ is not linear, even over a 
ring.
\end{ExC}

\begin{proof} Set $L_0:=k(A)(t_1,\dots,t_m)$.

\smallskip\noindent
{\it Proof if $d>0$.}
Then  $\Q(\sqrt{d})$
is a real field, we have
$\rk\,{\cal O^*}=1>\trdeg\,{\cal O^*}=0$. For $\delta\in \P^1_{L_0}$, the group
$\Lambda_\delta={\cal O^*}\times A$ is  a
bad subgroup of
$K^*$. So, by the Nonlinearity Criterion,
$\Aut_S\,\na_K^2$ is not linear, even over a 
ring.

\smallskip\noindent
{\it Proof if $d<0$.}
Let $\delta\in \P^1_{L}$.

If  $\delta$ belongs to $\P^1_{L_0}$,
we have $\Lambda_\delta={\cal O}^*\times A$.
Since $\Q(\sqrt{d})$ is an imaginary field, 
$\Lambda_\delta={\cal O}^*\times A$ 
is  a good subgroup of $K^*$ and 
$\Card\,\Lambda_\delta\cap\mu_\infty\leq 6$.

Assume now that $\delta$ belongs to 
$\P^1_{L_1}\setminus\P^1_{L_0}$, where
$L_1$ is a quadratic extension of
$L_0$. Let $B$ be the algebraic closure of
${\cal O}[A][t_1,\dots,t_m]$ in $L_1$
and let   $N:=\{z\in L_1\mid N_{L_1/L_0}(z)=1\}$
be the norm group. 
It is clear that

\centerline{$\Lambda_\delta=N\cap B^*$.}

By Lemma \ref{rank1}, we have
$\Lambda_\delta=\{\pm 1\}$ or
$\Lambda_\delta=\{\pm 1\}\times \Z$. 
Since ${\cal O}$ is algebraically closed in $L$,
it should be noted that when $\rk\,\Lambda_\delta=1$,
we also have $\trdeg\,\Lambda_\delta=1$. Thus
$\Lambda_\delta$ is  a good subgroup of
$K^*$ and $\Card\,\Lambda_\delta\cap\mu_\infty= 2$.

Otherwise, we have 
$\Lambda_\delta=\{\pm 1\}$ and the same conclusion holds.

Therefore, by the second Linearity Criterion,
$\Aut_S\,\na_K^2$ is linear over some field
extension of $K$.
\end{proof}

\section{Nonlinearity of Finite-Codimensional\\ Subgroups of $\Aut\,\na_K^3$}

Theorem A.2 shows that $\Aut\,\na_K^2$ contains some finite-codimensional subgroups,
which are linear as abstract groups. However, this result does not extend to $\na_K^n$, for 
$n\geq 3$, as it will be shown in this section. 
 
 For our purpose, the case $n=3$ is enough. 
Unlike in the introduction, {\it it will be convenient to use the  coordinates $(z,x,y)$ for $\na_K^3$}. Let
$\TAut\,\na_K^3$ be the the subgroup of tame automorphisms of $\na_K^3$, see 11.5 for the definition.  By the famous result of 
Shestakov and Urmibaev \cite{SU}, 
$\TAut\,\na_K^3$ is a {\it proper} subgroup of 
$\Aut\,\na_K^3$. 

Let ${\bf m}$ be a
 finite-codimensional ideal in $K[z,x,y]$. Let
 $\Aut_{\bf m}\,\na_K^3$ be the group of all polynomial automorphisms $\phi$ of the form
 
 \centerline{$(z,x,y)\mapsto 
 (z+f, x+g, y+h)$,}
 
 \noindent where $f,\,h$ and $g$ belongs to ${\bf m}$. Set 

\centerline{$\TAut_{\bf m}\,\na_K^3=
\TAut\,\na_K^3 \cap\Aut_{\bf m}\,\na_K^3$.}
 
The nonlinearity result for $n=3$, valid even if $K$ is finite, is unrelated with the existence of wild automorphisms
in $\na^3_K$, as shown by
 
 \begin{MainB}  For any finite-codimensional ideal ${\bf m}$
 of $K[z,x,y]$, the groups $\Aut_{\bf m}\,\na_K^3$ and
 $\TAut_{\bf m}\,\na_K^3$ are not linear, even over a ring.
  \end{MainB}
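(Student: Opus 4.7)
Since $\TAut_\mathbf{m}\,\na^3_K$ is a subgroup of $\Aut_\mathbf{m}\,\na^3_K$, containing a nonlinear subgroup forces the ambient group to be nonlinear, so it is enough to prove that $\TAut_\mathbf{m}\,\na^3_K$ is not linear over any commutative ring. The plan has two parts: (i) exhibit inside $\TAut_\mathbf{m}\,\na^3_K$ nilpotent subgroups of arbitrarily large nilpotency class via elementary tame shears, and (ii) combine the resulting obstruction to linearity over a field with the ring-versus-field criteria of Section~3 to rule out linearity over a ring.

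Because $\mathbf{m}$ is cofinite, the quotient $A:=K[z,x,y]/\mathbf{m}$ is a finite-dimensional $K$-algebra, so multiplication by $x$ and by $z$ on $A$ admit nonzero minimal polynomials $m_x(T),m_z(T)\in K[T]$, yielding $m_x(x)\in\mathbf{m}\cap K[x]$ and $m_z(z)\in\mathbf{m}\cap K[z]$. Setting $g_i:=m_z(z)^i$ and $f_k:=m_x(x)^k$, all the tame elementary shears
\[
\phi_{g_i}\colon(z,x,y)\mapsto(z,x+g_i(z),y),\qquad \tau_k\colon(z,x,y)\mapsto(z,x,y+f_k(z,x))
\]
belong to $\TAut_\mathbf{m}$. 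A direct calculation gives $[\phi_{g_i},\tau_k]=\tau_{\Delta_{g_i}f_k}$, where $(\Delta_g h)(z,x):=h(z,x-g(z))-h(z,x)$; the commuting operators $\Delta_{g_i}$ strictly lower $x$-degree, so iterated nested commutators give $\tau$-shears by iterated finite differences of $f_k$. In characteristic zero, the identity $\Delta_g^n(x^n)=(-g)^n n!\neq 0$ already forces $\langle\phi_{g_1},\tau_k\rangle$ to be nilpotent of class $\deg_x f_k+1$, unbounded in $k$. In positive characteristic $p$, the Frobenius identity $\Delta_g^p=0$ bounds a single pair, but iterated mixed differences $\Delta_{g_1}^{a_1}\cdots\Delta_{g_j}^{a_j}f_k$ with $0\leq a_i<p$ remain nonzero for $f_k$ of suitably rich $x$-degree, so the enlarged generating set $\phi_{g_1},\dots,\phi_{g_j},\tau_k$ produces nilpotent subgroups of class proportional to $j(p-1)$, again unbounded as $j\to\infty$. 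Hence $\TAut_\mathbf{m}\,\na^3_K$ contains nilpotent subgroups of arbitrarily large class.

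Over a field $F$, this already excludes linearity in any $\GL(M,F)$, since every nilpotent subgroup of $\GL(M,F)$ has class $\leq M-1$ by Lie--Kolchin. To promote this to nonlinearity over an arbitrary commutative ring, one invokes Corollary~\ref{criterion1} or~\ref{criterion2}: it is enough to exhibit a nondegenerate amalgamated or mixed product decomposition of $\TAut_\mathbf{m}\,\na^3_K$, or of a suitable large subgroup of it inherited from the Umirbaev--Wright amalgamated structure on $\TAut\,\na^3_K$, whose core is trivial, so that linearity over a ring collapses to linearity over a field; the field-case obstruction then finishes the proof for both $\TAut_\mathbf{m}\,\na^3_K$ and, a fortiori, $\Aut_\mathbf{m}\,\na^3_K$. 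The main obstacles are (a) verifying the unboundedness of nilpotent class in positive characteristic, where the combinatorics of finite differences over $\F_p$ is more delicate, and (b) producing the amalgamated or mixed product decomposition of $\TAut_\mathbf{m}$ with trivial core so that the Section~3 criteria apply; neither difficulty depends on the specific choice of finite-codimensional ideal $\mathbf{m}$.
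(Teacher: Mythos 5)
Your overall architecture is the same as the paper's: produce nilpotent subgroups of unbounded class to kill linearity over a field, then use a trivial-core amalgamated (or mixed) product decomposition and the Section~3 criteria to upgrade to nonlinearity over a ring. But the two points you defer as ``obstacles'' are precisely the two places where the proof actually lives, and as stated your plan for each of them either fails or is not a plan.

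First, the positive-characteristic nilpotency claim is a genuine gap, not just an unchecked detail. ``Suitably rich $x$-degree'' does not suffice: in characteristic $p$ one has $\Delta_g(x^{p^j})=-g^{p^j}$, a constant in $x$, so a single difference can annihilate all further ones no matter how large the degree is; for your $f_k=m_x(x)^k$ nothing guarantees the mixed differences $\Delta_{g_1}^{a_1}\cdots\Delta_{g_j}^{a_j}f_k$ survive. The paper's resolution is a specific computation: take $E\subset I$ an $\F_p$-subspace of dimension $r$ and $f=a\,t^{p^r-1}$; then $\sum_{u\in E}f(t+u)$ evaluated at $0$ equals $a\prod_{u\in E\setminus\{0\}}u$ (Lemma \ref{formula}), which is nonzero because $I$ sits in an integral domain, so the $\F_p[E]$-module generated by $f$ is free of rank one (Lemma \ref{free}) and the class is exactly $1+r(p-1)$ (Lemmas \ref{G(r)}, \ref{index}). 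Without this identity, or something equivalent, your step (a) does not go through.

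Second, your route to the trivial-core decomposition is the wrong one. You do not need a decomposition of $\TAut_{\bf m}\,\na^3_K$ itself, and invoking the Shestakov--Umirbaev/Wright structure of $\TAut\,\na^3_K$ is both overkill and unhelpful: it is a deep theorem, and it does not obviously restrict to the congruence subgroup ${\bf m}$ with a computable core. The paper instead works entirely inside the subgroup fixing $z$: setting $I={\bf m}\cap K[z]$ (nonzero and proper since ${\bf m}$ is finite-codimensional and proper), van der Kulk's Theorem over $K(z)$ together with Lemma \ref{subamal} yields an embedding $\Aff(2,I)*_{B_{\Aff}(I)}\Elem(I)\hookrightarrow \TAut_{\bf m}\,\na^3_K$, and the triviality of $\Core_\Gamma(B_{\Aff}(I))$ is checked by conjugating an arbitrary nontrivial element of $B_{\Aff}(I)$ by one linear shear and one elementary automorphism of degree $n\geq 3$ coprime to $\ch K$ (Lemma \ref{HI}). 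Note also that $\Elem(I)\simeq I\ltimes I[x]$ is exactly where the unbounded-class subgroups live, so the field obstruction and the amalgam are applied to the same explicit subgroup; in your version the nilpotent subgroups and the (unspecified) amalgam are not visibly tied together. Your characteristic-zero computation and the reduction from $\Aut_{\bf m}$ to a nonlinear subgroup are fine.
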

 
 The proof  uses the folklore embedding $\Phi$, likely known by
Nagata \cite{Na}, and used in \cite{SU}.
The simplest obstruction for  the linearity
is due to a
nonnilpotent locally nilpotent subgroup.
In characteristic zero, the proof is easy, and it
follows  the line of \cite{Co} together with Corollary 2. In characteristic $p$, the proof 
involves  the strange formula of Lemma \ref{formula}.

\bigskip\noindent
{\it 11.1 Nilpotency class of some p-groups}

\noindent 
The {\it nilpotency class} of a nilpotent group is the lenght of its ascending central series.
Let $p$ be a prime integer, and let $E$ be an
elementary $p$-group of rank $r$. Note that  $E$ acts by translation on $\F_p[E]$ and set $G(r)=E\ltimes \F_p[E]$.  

Set $E=D_1\times\dots\times D_r$, where
each $D_i$ has rank $1$.
For each $i$, the
socle filtration of the
$D_i$-module $\F_p[D_i]$ has length $p$
and we have $\F_p[E]=
\F_p[D_1]\otimes\dots\otimes \F_p[D_n]$.
Hence the socle filtration of the $E$-module $\F_p[E]$ has lenght $1+(p-1)r$. It follows that

\begin{lemma}\label{G(r)}
The nilpotency class of the nilpotent group $G(r)$ is $1+(p-1)r$.
\end{lemma}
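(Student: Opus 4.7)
The plan is to compute the lower central series of $G(r) = E \ltimes \F_p[E]$ explicitly and to identify its length with the nilpotency index of the augmentation ideal $I \subset \F_p[E]$. The starting point is the observation that, for $e \in E$ and $v \in \F_p[E]$ viewed as elements of $G(r)$, the commutator formula reads $(e,v) = (e-1)\cdot v$, the product being taken in the group algebra. Since both $E$ and $\F_p[E]$ are abelian, this forces $\gamma_2(G(r)) = (G(r),G(r))$ to coincide, as an additive normal subgroup of $\F_p[E]$, with the span of the elements $(e-1)\cdot v$; that is, with $I$ itself (the span is all of $I$ because $v=1$ already yields every $e-1$, and $I$ is closed under multiplication by $\F_p[E]$).

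Iterating this identity inductively, I would prove that $\gamma_{k+1}(G(r)) = I^k$ for every $k \geq 1$, each $I^k$ being regarded as the corresponding additive normal subgroup of $G(r)$. Indeed, $\gamma_{k+2}(G(r)) = (G(r), \gamma_{k+1}(G(r)))$ is, by the same formula, the additive span of $(e-1)\cdot w$ with $w \in I^k$, which is exactly $I^{k+1}$. Consequently the nilpotency class of $G(r)$ equals the largest integer $c$ such that $I^c \neq 0$.

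The remaining step is a standard computation in commutative algebra. Using the tensor decomposition $\F_p[E] \simeq \F_p[D_1] \otimes \cdots \otimes \F_p[D_r]$ recalled in the excerpt and the identification $\F_p[D_i] \simeq \F_p[x_i]/(x_i^p)$ with $x_i = g_i - 1$ for a generator $g_i$ of $D_i$, the ideal $I$ equals $(x_1,\dots,x_r)$, and $I^c$ is spanned by the monomials $x_1^{a_1}\cdots x_r^{a_r}$ with $a_i \leq p-1$ and $\sum a_i \geq c$. Such monomials exist if and only if $c \leq r(p-1)$, so $I^{r(p-1)}\neq 0$ while $I^{r(p-1)+1}=0$. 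This yields the claimed nilpotency class $1+(p-1)r$. This final determination is really the only computational content; it is equivalent to the socle-filtration description given in the excerpt, and presents no genuine obstacle.
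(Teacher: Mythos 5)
Your argument is correct and is essentially the paper's: you track the lower central series via powers of the augmentation ideal $I$, while the paper tracks the (dual) socle filtration of the $E$-module $\F_p[E]$, and both reduce to the same count, namely that the Loewy length of $\F_p[x_1,\dots,x_r]/(x_1^p,\dots,x_r^p)$ is $1+r(p-1)$. One indexing slip: since $\gamma_{k+1}(G(r))=I^k$, the nilpotency class is the \emph{smallest} $c$ with $I^c=0$, i.e.\ one more than the largest $c$ with $I^c\neq 0$, so your stated criterion is off by one even though the final answer $1+(p-1)r$ you give is the correct one.
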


Let $M$ be a cyclic $\F_p[E]$-module generated by some $f\in M$. 

\begin{lemma}\label{free} If $\sum_{u\in E} \, u.f\neq 0$, then  the $\F_p[E]$-module $M$ is free of rank one.
\end{lemma}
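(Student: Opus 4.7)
The plan is to exploit the structure of $R := \F_p[E]$ as a local Artinian ring with one-dimensional socle, and to observe that the generator of this socle is exactly the norm element $N := \sum_{u\in E} u$. Once this is established, a nonzero annihilator of $f$ in $R$ would be forced to contain $N$, contradicting the hypothesis.

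First I would show that $R$ is local with residue field $\F_p$. Since $E$ is an elementary abelian $p$-group in characteristic $p$, each $u \in E$ satisfies $(u-1)^p = u^p - 1 = 0$, so the augmentation ideal $\mathfrak{m} := (u-1 \mid u \in E)$ is nilpotent. Writing $E = D_1 \times \cdots \times D_r$ with $D_i = \langle u_i\rangle$ and setting $y_i := u_i - 1$, there is an identification
\[
R \;\simeq\; \F_p[y_1,\dots,y_r]\,/\,(y_1^p,\dots,y_r^p),
\qquad \mathfrak{m} = (y_1,\dots,y_r).
\]
From the monomial basis of this truncated polynomial ring, the annihilator of $\mathfrak{m}$ in $R$ is the one-dimensional subspace spanned by $y_1^{p-1}\cdots y_r^{p-1}$. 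Using $\binom{p-1}{k} \equiv (-1)^k \bmod p$ one obtains $(u_i-1)^{p-1} = \sum_{k=0}^{p-1} u_i^k$, and taking the product over $i$ yields $y_1^{p-1}\cdots y_r^{p-1} = N$. Hence the socle of $R$ is $\F_p \cdot N$.

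With this structural input the lemma is immediate. Writing $I := \{r \in R \mid r \cdot f = 0\}$ for the annihilator of $f$, the cyclic module $M = R \cdot f$ is isomorphic to $R/I$, so freeness of rank one is equivalent to $I = 0$. Suppose for contradiction that $I \neq 0$. Since $R$ is Artinian, $I$ contains a simple $R$-submodule; but every simple $R$-module is a copy of $R/\mathfrak{m} = \F_p$ and so sits inside the socle $\F_p \cdot N$. Hence $N \in I$, giving $N \cdot f = 0$, which contradicts $\sum_{u\in E} u \cdot f \neq 0$. Thus $I = 0$ and $M$ is free of rank one.

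I do not expect any real obstacle. The only step with genuine content is the identification of the socle of $\F_p[E]$ with $\F_p \cdot N$; the rest is the standard observation that in an Artinian local ring with one-dimensional (hence simple) socle, every nonzero ideal contains the socle.
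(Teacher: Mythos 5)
Your proof is correct and follows essentially the same route as the paper, whose argument is precisely that $\F_p\cdot N$ is the space of $E$-invariants of the regular module, so every nonzero ideal of $\F_p[E]$ contains $N$, and $N.f\neq 0$ then kills the annihilator of $f$. The only difference is cosmetic: you verify explicitly, via the truncated polynomial presentation and $(u-1)^{p-1}=\sum_k u^k$, the one-dimensionality of the socle that the paper simply asserts as $\F_p.N=H^0(E,\F_p[E])$.
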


\begin{proof}Set $N=\sum_{u\in E}\,e^u$,
where $(e^u)_{u\in E}$ is the usual basis of
$\F_p[E]$. 
Note that $\F_p.N= H^0(E,\F_p[E])$, hence any nonzero ideal of $\F_p[E]$ contains $N$. Since $N.f\neq 0$, $M$ is freely generated by $f$.
\end{proof}

\bigskip\noindent
{\it 11.2 A formula}

\begin{lemma}\label{formula} Let $A$ be a 
commutative $\F_p$-algebra and let
$E\subset A$ be a linear subspace of 
dimension $r$. We have

 \centerline{$\sum_{u\in E} \,u^{p^r-1}=
\prod_{u\in E\setminus\{0\}}\, u$.}
\end{lemma}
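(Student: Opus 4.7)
The plan is to interpret both sides as homogeneous polynomials of degree $p^r-1$ in the polynomial ring $\F_p[v_1,\ldots,v_r]$, for any chosen $\F_p$-basis $v_1,\ldots,v_r$ of $E$, and prove they coincide there; the identity in $A$ then follows by applying the unique $\F_p$-algebra map $\F_p[v_1,\ldots,v_r]\to A$ sending each indeterminate to the corresponding element $v_i\in A$. Write $\tilde S$ and $\tilde P$ for the two sides.

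First I expand $\tilde S$ via the multinomial theorem and the classical identity $\sum_{k\in\F_p}k^m=-1$ when $m\geq 1$ is divisible by $p-1$, and $=0$ otherwise, obtaining
\[
\tilde S=(-1)^r\sum_\alpha\binom{p^r-1}{\alpha_1,\ldots,\alpha_r}v_1^{\alpha_1}\cdots v_r^{\alpha_r},
\]
the sum ranging over tuples $\alpha=(\alpha_1,\ldots,\alpha_r)$ of positive multiples of $p-1$ with $\sum\alpha_i=p^r-1$. Every monomial on the right is divisible by $(v_1\cdots v_r)^{p-1}$. Running the same calculation in any other $\F_p$-basis of $E$ yields $L^{p-1}\mid\tilde S$ for every nonzero element $L\in E$ regarded as a linear form in the $v_i$. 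Since the factors $L_{[k]}^{p-1}$, one for each of the $(p^r-1)/(p-1)$ projective classes $[k]\in\P^{r-1}(\F_p)$, are independent of the choice of representative (as $c^{p-1}=1$ for $c\in\F_p^*$) and are pairwise coprime in the polynomial ring, their product $\prod_{[k]}L_{[k]}^{p-1}$, of total degree exactly $p^r-1$, divides $\tilde S$.

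On the other hand, grouping the $p-1$ nonzero scalar multiples within each class in $\tilde P=\prod_{u\in E\setminus\{0\}}u$ and using Wilson's $\prod_{a\in\F_p^*}a=-1$ gives $\tilde P=\pm\prod_{[k]}L_{[k]}^{p-1}$. Hence $\tilde P\mid\tilde S$, and since $\deg\tilde S=\deg\tilde P=p^r-1$, one has $\tilde S=\alpha\tilde P$ for some $\alpha\in\F_p$.

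To fix $\alpha$, I specialise $v_1,\ldots,v_r$ to an $\F_p$-basis of $\F_{p^r}$, so that $E$ becomes all of $\F_{p^r}$. The generalised Wilson theorem gives $\tilde P\mapsto\prod_{u\in\F_{p^r}^*}u=-1$, while $u^{p^r-1}=1$ for every $u\in\F_{p^r}^*$ gives $\tilde S\mapsto p^r-1=-1$ in $\F_p$; equality of these two nonzero values forces $\alpha=1$. The main obstacle is the divisibility $\tilde P\mid\tilde S$: the emergence of $p-1$ as the common minimal exponent in every surviving monomial of $\tilde S$ is a genuine characteristic-$p$ phenomenon (absent in characteristic zero), and once it is in hand the remainder of the proof is sign-bookkeeping and a specialisation.
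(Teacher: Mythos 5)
Your proof is correct and follows essentially the same route as the paper: reduce to the universal case in $\F_p[v_1,\dots,v_r]$, use the power-sum identity $\sum_{k\in\F_p}k^m$ to show every nonzero linear form to the power $p-1$ divides the left-hand side, conclude by a degree count that the two sides agree up to a scalar, and pin down the scalar by specializing to $\F_{p^r}$. The only cosmetic difference is that you establish the initial divisibility by a full multinomial expansion while the paper groups the sum over lines $\{\lambda x_1+v\}$, and you are slightly more careful about coprimality of the projective classes of linear forms.
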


\begin{proof} It is enough to prove the claim for
$A=\F_p[x_1,\dots,x_r]$ and 
$E=\oplus_{i} \F_p.x_i$.
Set $P(x_1,\dots,x_n):=\sum_{u\in E} \,u^{p^r-1}$,
set $H=\F_p.x_2\oplus \F_p.x_3\dots\oplus\F_p.x_r$ and for $v\in H$, set
$Q_v:=\sum_{\lambda\in \F_p}\,
(\lambda x_1+ v)^{p^r-1}$.
For any integer $n\geq 0$, we have
$\sum_{\lambda\in \F_p}\,\lambda^n=0$
except if $n$ is a positive multiple of $p-1$. Hence

\centerline{
$Q_v=\sum_{n>0} \,c_n
 x_1^{n(p-1)} v^{p^r-1-n(p-1)}$,}

\noindent for some $c_n\in \F_p$ and the polynomial
$Q_v$ is divisible by $x_1^{p-1}$.
Since 

\centerline
{$P(x_1,\dots,x_n)=\sum_{v\in H}\,Q_v$,}

\noindent the polynomial $P(x_1,\dots,x_n)$ is divisible by $x_1^{p-1}$. By $\GL(r,\F_p)$-invariance, $P(x_1,\dots,x_n)$ is divisible by $u^{p-1}$ for all $u\in E\setminus\{0\}$.
Hence it is divisible by
$\prod_{u\in E\setminus\{0\}}\, u$. Since both polynomials have
degree $p^r-1$, it follows that

\centerline{$P(x_1,\dots,x_n)=c
\prod_{u\in E\setminus\{ 0\}}\, u$,}

\noindent for some $c\in \F_p$.

As it is a universal constant, we can compute
$c$ for $A=E=\F_{p^r}$. Since 

\centerline{
$\sum_{\lambda\in \F_p^r}\,\lambda^{p^r-1}=-1$, and
$\prod_{\lambda\in \F_p^r}\,\lambda=-1$,}

\noindent it follows that $c=1$.
\end{proof}

\bigskip\noindent
{\it 11.3 The locally nilpotent group $G(I)$}

\noindent Let $\F$ be a prime field. For any ideal $I$ of a commutative $\F$-algebra
$A$,  let us consider the semi-direct product 
$G(I):=I\ltimes I[t]$, where $I$ acts by
translation on the space $I[t]$
of polynomials with coefficients in $I$.

Let $E\subset I$ be an  additive subgroup, let $f(t)\in I[t]\setminus\{ 0\}$ and let $M$ be the additive subgoup generated 
 by all polynomials $f(t+u)$ when $u$ runs over 
 $E$. The group $E\ltimes M$, which  is a subgroup of $G(I)$, is obvioulsly nilpotent.

\begin{lemma}\label{index}
Assume that the algebra $A$ is prime.

(i) If $\F=\Q$,  the nilpotency class of $E\ltimes M$ is  $1+\deg\,f$.

(ii) Assume that $\F=\F_p$, that 
$\dim_{\F_p}\,E=r$ and that $f(t)=a x^{p^r-1}$ for some $a\in I\setminus\{ 0\}$. Then the group 
$E\ltimes M$ has nilpotency class 
$1+r(p-1)$.

(iii) If $\dim_{\F_p}\,I=\infty$, 
the group $G(I)$ is locally nilpotent but not nilpotent.
\end{lemma}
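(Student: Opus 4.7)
The plan is to analyze the lower central series of $E \ltimes M$ by exploiting the commutator identity $[u, g(t)] = g(t+u) - g(t) = \Delta_u g(t)$, which identifies descent in the central series with iteration of difference operators.

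For (i), since both $E$ and $M$ are abelian, I would first note that $\Gamma_2(E \ltimes M) \subset M$, and by induction $\Gamma_{k+1}$ is contained in the span of polynomials of degree $\leq \deg f - k$, so $\Gamma_{d+2} = 0$ where $d = \deg f$, bounding the class by $1 + d$. For the matching lower bound, pick any nonzero $u \in E$ and compute $\Delta_u^d f = d!\, a_d\, u^d$, where $a_d \in I$ is the leading coefficient of $f$. Since $A$ is a prime commutative ring (hence an integral domain) and $\F = \Q$ makes $d!$ invertible, this iterated commutator is a nonzero constant in $\Gamma_{d+1}$, giving class exactly $1 + d$.

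For (ii), the clean path is through the $\F_p[E]$-module structure on $M$. The group $M$ is cyclic as an $\F_p[E]$-module generated by $f$, so by Lemma \ref{free} it is free of rank one provided $\sum_{u \in E} f(t+u) \neq 0$. To verify this, evaluate at $t = 0$: $\sum_{u \in E} f(u) = a \sum_{u \in E} u^{p^r - 1}$, which by Lemma \ref{formula} equals $a \prod_{u \in E \setminus \{0\}} u$, and this is nonzero because $a \neq 0$ and $A$ is a commutative domain. Consequently $M \cong \F_p[E]$ as $E$-modules, so $E \ltimes M \cong G(r)$, and Lemma \ref{G(r)} delivers the class $1 + r(p-1)$.

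For (iii), local nilpotency of $G(I)$ follows from the observation that any finitely generated subgroup lies in some $E' \ltimes M'$ where $E'$ is a finitely generated (hence finite) elementary $p$-subgroup of $I$ and $M'$ consists of polynomials of degree at most some $d$; the difference-operator argument of part (i) then bounds its nilpotency class by $d + 1$. To see $G(I)$ is not nilpotent, I invoke part (ii): since $\dim_{\F_p} I = \infty$, for every $r \geq 1$ one can select a subspace $E \subset I$ of dimension $r$ and a nonzero $a \in I$, producing a subgroup $E \ltimes M$ of class $1 + r(p-1)$, which is unbounded in $r$. The delicate step throughout is the non-vanishing hypothesis of Lemma \ref{free} in part (ii), which rests on the combinatorial identity of Lemma \ref{formula} combined with the primeness of $A$; everything else is a careful bookkeeping of degrees and free-module structure.
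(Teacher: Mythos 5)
Your proof is correct and follows essentially the same route as the paper: the core of part (ii) is identical (evaluate $\sum_{u\in E}f(t+u)$ at $t=0$, apply Lemma \ref{formula} and primeness of $A$ to get nonvanishing, then Lemma \ref{free} and Lemma \ref{G(r)}). Parts (i) and (iii) are dismissed in the paper as ``obvious'' and ``a consequence of (i) and (ii)'' respectively; your difference-operator bookkeeping for (i) and your reduction of (iii) to finitely generated subgroups plus the unbounded classes from (i)/(ii) supply exactly the details being waved at.
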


\begin{proof} Assertion (i) is obvious, and Assertion
(iii) is a consequence of Assertions (i) and (ii).
We will  prove  Assertion (ii), for which $\F=\F_p$.

Set  $g(t)=\sum_{u\in E}\,f(t+u)$. We have 
$g(0)=\sum_{u\in E} \,a u^{p^r-1}=
a\prod_{u\in E\setminus\{0\}}\, u$  by
Lemma \ref{formula}.
Since
$g(0)\neq 0$, it follows from Lemma \ref{free} that
the $\F_p[E]$-module $M$ is free of rank one,
and $E\ltimes M$ is isomorphic to $G(r)$. Thus
its nilpotency class is $1+r(p-1)$ 
by Lemma \ref{G(r)}.
\end{proof}

\bigskip\noindent
{\it 11.4 The amalgamated product 
$\Aff(2,I)*_{B_{Aff}(I)}\Elem(I)$}

\noindent From now on, let $I$ be a proper
nonzero ideal in $K[z]$.

\begin{lemma}\label{ElemI}
The group $\Elem(I)$ is not linear over a field.
\end{lemma}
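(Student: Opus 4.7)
The plan is to exhibit inside $\Elem(I)$ an explicit copy of the locally nilpotent group $G(I)=I\ltimes I[t]$ of Subsection~11.3, and then to conclude that linearity is impossible since $G(I)$ contains f.g.\ nilpotent subgroups of arbitrarily large nilpotency class.

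First I would check that the set of automorphisms $\phi_{a,g}:(x,y)\mapsto(x+a,\,y+g(x))$, with $a\in I$ and $g(x)\in I[x]$, forms a subgroup of $\Elem(I)$: each such $\phi_{a,g}$ is elementary with linear parts $z_1=z_2=1$, and lies in the congruence kernel of the reduction $\Elem(K[z])\to\Elem(K[z]/I)$. A direct composition yields $\phi_{a_1,g_1}\circ\phi_{a_2,g_2}=\phi_{a_1+a_2,\,g_1(x+a_2)+g_2(x)}$, identifying this subgroup with $G(I)$ as a semidirect product with $I$ acting by translation on $I[t]$. Since $I$ is a proper nonzero ideal in $K[z]$, it has infinite dimension over the prime field $\F$, so Lemma~\ref{index} exhibits inside $G(I)\subset\Elem(I)$ finitely generated nilpotent subgroups $E\ltimes M$ of nilpotency class $1+\deg f$ in characteristic zero, or $1+r(p-1)$ in characteristic $p$ (with $r=\dim_{\F_p}E$), and in either case this class can be made arbitrarily large.

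Now assume for contradiction that $\Elem(I)\subset\GL(n,L)$ for some field $L$ and integer $n\geq 1$. A classical fact states that the nilpotency class of any nilpotent subgroup of $\GL(n,L)$ is bounded by a universal constant $c(n)$ depending only on $n$: one passes to the Zariski closure, which preserves nilpotency and its class (since commutator maps are morphisms and the lower central series consists of closed subgroups), and observes that a nilpotent linear algebraic subgroup of $\GL_n$ is torus-by-unipotent on its identity component (hence of class at most $n-1$ there), the finite component group being controlled by Jordan--Schur type bounds. Choosing the parameters above large enough produces a f.g.\ nilpotent subgroup of $\Elem(I)$ of nilpotency class strictly greater than $c(n)$, yielding the contradiction. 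The main obstacle here is to justify or cite this uniform bound $c(n)$; once available, the rest of the proof is the direct application of Lemma~\ref{index} above.
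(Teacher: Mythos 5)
Your first two steps coincide with the paper's proof: since $I$ is a proper ideal of $K[z]$, the linear coefficients of an element of $\Elem(I)$ are forced to equal $1$, so $\Elem(I)$ is in fact \emph{equal to} (not merely contains) the group of maps $(x,y)\mapsto(x+u,y+f(x))$ with $u\in I$, $f\in I[x]$, i.e.\ a copy of $G(I)$, and Lemma \ref{index} then supplies finitely generated nilpotent subgroups of arbitrarily large nilpotency class. The gap is in your final step. The ``classical fact'' you invoke --- that nilpotent subgroups of $\GL(n,L)$ have nilpotency class bounded by a constant $c(n)$ --- is false: the dihedral $2$-groups $D_{2^m}$ of order $2^m$ embed in $\GL(2,\R)$ as symmetries of a regular $2^{m-1}$-gon and are nilpotent of class $m-1$. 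The classical uniform bound is on the \emph{derived length} of solvable linear groups (Zassenhaus), not on the nilpotency class of nilpotent ones, and your sketch breaks exactly where you appeal to Jordan--Schur to control the component group: a nilpotent group that is (class $\leq n-1$)-by-(finite of bounded order) can still have arbitrarily large class, as $D_{2^m}$ (cyclic-by-$\Z/2\Z$) shows.

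The conclusion is still reachable, but only by exploiting the specific structure of the subgroups $E\ltimes M$ from Lemma \ref{index}, not their class alone. When $\ch K=0$ they are finitely generated, torsion-free and nonabelian; by Mal'cev's theorem a nilpotent subgroup of $\GL(n,L)$ has a triangularizable subgroup of index bounded in terms of $n$, hence of class bounded in terms of $n$, and for a \emph{torsion-free} nilpotent group a finite-index subgroup has the same class as the whole group (isolator argument) --- this is precisely where torsion-freeness rescues the bound that fails for $D_{2^m}$. (One must also exclude $\ch L=p>0$, e.g.\ because a nontrivial commutator in $E\ltimes M$ has infinite order but would be forced to be unipotent, hence of finite order.) When $\ch K=p$ the groups $E\ltimes M\simeq G(r)$ are finite $p$-groups: if $\ch L=p$ they are unipotent, hence of class $<n$; if $\ch L\neq p$ they contain elementary abelian $p$-subgroups of rank $p^r$, which forces $n\geq p^r$. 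The paper is equally laconic at this point (it simply asserts the conclusion after citing Lemma \ref{index}), but it does not commit to the false uniform bound; as written, your justification of the decisive step does not hold up and needs to be replaced by an argument of the above kind.
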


\begin{proof} 
Since $I$ is a proper ideal, 
$\Elem(I)$  is the group of all automorphisms

\centerline{$\phi:(x,y)\mapsto (x+u, y+f(x))$,}

\noindent for some $u\in I$ and $f\in I[x]$. It follows that
$\Elem(I)$ is isomorphic to the group 
$G(I)$, and therefore $\Elem(I)$  contains subgroups
of abitrarily large nilpotency class by Lemma
\ref{index}. Hence $\Elem(I)$ is not linear over a field.
\end{proof}

\begin{lemma}\label{HI} The group
$\Aff(2,I)*_{B_{Aff}(I)}\Elem(I)$ is not
linear, even over a ring.
\end{lemma}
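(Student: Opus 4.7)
The plan is to reduce nonlinearity of $\Gamma := \Aff(2, I) *_{B_{\Aff}(I)} \Elem(I)$ over a ring to the known nonlinearity of $\Elem(I)$ over a field (Lemma \ref{ElemI}), via Corollary \ref{criterion1}. Since $\Elem(I)$ injects into $\Gamma$, it suffices to verify that $\Gamma$ is a nondegenerate amalgamated product with trivial core $\Core_\Gamma(B_{\Aff}(I))$: the corollary will then upgrade any hypothetical ring-linear embedding of $\Gamma$ to a field-linear one, forcing $\Elem(I)$ to be linear over a field and contradicting Lemma \ref{ElemI}. That $\Gamma$ genuinely embeds into $\Aut\,\na_{K[z]}^2$ with the expected factors is a routine consequence of Lemma \ref{subamal} applied to van der Kulk's theorem: from the congruence definitions one sees at once that $\Aff(2, I) \cap B_{\Aff}(K[z]) = \Elem(I) \cap B_{\Aff}(K[z]) = B_{\Aff}(I)$.

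Nondegeneracy should be immediate: picking any nonzero $u \in I$, the map $(x, y) \mapsto (x + uy, y)$ lies in $\Aff(2, I) \setminus B_{\Aff}(I)$, while $(x, y) \mapsto (x, y + u x^2)$ lies in $\Elem(I) \setminus B_{\Aff}(I)$.

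The main obstacle will be the core triviality. Since $I$ is proper, $I \cap K = 0$, which forces the units appearing in $B_{\Aff}(I)$-elements to be $1$; so any $g \in B_{\Aff}(I)$ takes the form $(x, y) \mapsto (x + t_1, y + ax + t_2)$ with $t_1, a, t_2 \in I$. For each nontrivial such $g$, I would exhibit $h \in \Gamma$ with $hgh^{-1} \notin B_{\Aff}(I)$ by splitting into three cases. When $t_1 \neq 0$, I plan to conjugate by $(x, y) \mapsto (x, y + s x^n) \in \Elem(I)$ with nonzero $s \in I$ and $n \geq 3$ coprime to $\ch K$: this should add $s[(x + t_1)^n - x^n]$, a polynomial of degree $n - 1 \geq 2$ in $x$ (since $K[z]$ is a domain and $n$ is invertible in $K$), to the $y$-component, breaking affineness. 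When $t_1 = 0$ but $a \neq 0$, conjugation by $(x, y) \mapsto (x + uy, y) \in \Aff(2, I)$ for nonzero $u \in I$ should introduce the nonzero $(1,2)$-entry $-u^2 a$ in the linear part, kicking the result out of the lower-triangular group $B_{\Aff}(I)$. The most delicate case is $t_1 = a = 0$, $t_2 \neq 0$: here $g$ is a pure $y$-translation, central in $\Elem(I)$ and sent to another translation by any $\Aff(2, I)$-conjugation, so no single conjugation can work; the remedy is to compose the two types, first applying $h_1(x,y) = (x + uy, y)$ to replace $g$ by $(x + u t_2, y + t_2)$ (an $x$-shift), and then reducing to the first case. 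With the three cases exhausted, Corollary \ref{criterion1} yields the lemma.
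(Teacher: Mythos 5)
Your proposal is correct and follows essentially the same route as the paper: reduce to Corollary \ref{criterion1} together with Lemma \ref{ElemI}, then kill the core $\Core_\Gamma(B_{\Aff}(I))$ by an explicit three-case conjugation argument (an elementary conjugator for a nonzero $x$-translation, an affine shear for a nonzero linear part, and their composite for a pure $y$-translation). The paper's proof is the same argument with the cases taken in a different order and with fixed conjugators $\gamma$, $\phi$, $\gamma\phi$.
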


\begin{proof} By Lemma \ref{ElemI}, the group
$\Elem(I)$ is not linear over a field. Therefore
by Corollary 1, it is enough to show that
the amalgamated product $\Gamma:=\Aff(2,I)*_{B_{Aff}(I)}\Elem(I)$ satisfies

\centerline{$\Core_\Gamma(B_{Aff}(I))=\{1\}$.}

In order to do so, we first define two 
specific automorphisms $\gamma$ and $\phi$ as follows.
Let $r\in I\setminus\{0\}$ and
let $n\geq 3$ be an integer coprime to 
$\ch\,K$. Let $\gamma\in \Aff(2,I)$ be the
linear map $(x,y)\mapsto (x+ry,y)$ and let
$\phi\in \Elem(I)$ be the polynomial
automorphisms 
$(x,y)\mapsto (x,y+rx^n)$.

Let $g$ be an arbitrary element of  $B_{Aff}(I)\setminus \{1\}$.
By definition,
$g$ is an affine map
$(x,y)\mapsto(x+u,y+v+wx)$ for some $u,\,v.\,w\in I$.

If $w\neq 0$,  the linear part of
$g^\gamma$ is not lower triangular, therefore
$g^\gamma$ is not in $B_{Aff}(I)$.  If $w=0$ but $u\neq 0$, then the leading term $g^\phi$,
which is  
$(x,y)\mapsto (0, nru x^{n-1})$, has degree $\geq 2$. Therefore 
$g^\phi$ is not in $B_{Aff}(I)$. Last if $u=w=0$,
then $v$ is not equal to zero. It follows
that the leading term of $g^{\gamma\phi}$, which is $(x,y)\mapsto (0, nr^2v x^{n-1})$, 
has degree $\geq 2$.
Therefore 
$g^{\gamma\phi}$ is not in $B_{Aff}(I)$.

Hence, for any $g\in B_{Aff}(I)\setminus\{ 1\}$
at least one of the three elements
$g^\gamma$, $g^\phi$ or $g^{\gamma\phi}$ is not in $B_{Aff}(I)$. Therefore 
$\Core_\Gamma(B_{Aff}(I))$ is trivial.
\end{proof}

\bigskip\noindent
{\it 11.5 Proof of Theorem B}

\noindent The group $\TAut\,\na_K^3\subset \Aut\,\na_K^3$ of {\it
tame automorphisms} of $\na_K^3$ is

\centerline{
$\TAut\,\na_K^3=\langle\Aff(3,K),T(3,K)
\rangle$,}

\noindent where $\Aff(3,K)$ is the group of affine automorphisms of $\na_K^3$ and 
$T(3,K)$ is the group
of all triangular automorphisms

\centerline{$(z,x,y)\mapsto (z, x+f(z), y+g(z,x))$,}

\noindent where $f$ and $g$ are polynomials.
Note that $\Aut\,\na_{K[z]}^2=\Aut_{K[z]}\, K[z,x,y]$ is 
obviously the subgroup of 
$\Aut\,\na_K^3=\Aut_K\, K[z,x,y]$ of all automorphisms of the form

\centerline{$(z,x,y)\mapsto (z, f(z,x,y), g(z,x,z))$,}

\noindent where $f$ and $g$ are polynomials.

It is easy to see that
the groups $\Elem(K[z])$
and $\Aff(2,K[z])$ are subgroups of
$\TAut\,\na_K^3$. Therefore van
der Kulk's Theorem for the field $K(z)$
and Lemma \ref{subamal} imply that the homomorphism

\centerline
{ $\Phi:\Aff(2,K[z])*_{B(K[z]}\,\Elem(K[z])
\to \Aut\,\na_{K[z]}^2\cap 
\TAut\,\na_K^3$
}

\noindent is an embedding,  likely known by Nagata.
The hard and  beautiful result of \cite{SU} states that $\Phi$ is onto, a result which is not needed here. 

Now, we prove Theorem B.

\begin{proof} Without loss of generality, we can assume that the  ideal ${\bf m}$ is also a proper ideal.  Hence the ideal   $I:={\bf m}\cap K[z]$ is nonzero and proper.

The previously defined
morphism $\Phi$ induces an embedding 

\centerline{
$\Aff(2,I)*_{B_{Aff}(I)}\Elem(I)\to
\Aut_{\bf m}\,\na_K^3$.} 
 
\noindent
Hence by Lemma \ref{HI}, the group $\TAut_{\bf m} \,\na_K^3$
is not linear, even over a ring.
\end{proof}

\bigskip\noindent
{\it Acknowledgements.} J.P. Furter and R. Boutonnet informed us that they also found a f.g. subgroup of $\Aut\, \na_Q^2$
which is not linear over a field \cite{BF}. 

We  heartily thank S. Cantat, Y. Cornulier, 
I. Irmer, S. Lamy, J.-C. Sikorav, I. Soroko, and the referee for interesting comments. {\it Special thanks are due} to
Y. Benoist for his help in the proof of Lemma \ref{Baire}, 
T. Delzant for bringing my attention to
\cite{DS} and E. Zelmanov for an inspiring talk.

We also thank the hospitality of the
International Center for Mathematics
at SUSTech, where this work was partly done.


\begin{thebibliography}{1}
 
 
 
 \bibitem{Ba} R. Baer, Erweiterung von Gruppen und ihren Isomorphismen, Mathematische Zeitschrift 38
 (1934) 375-416.
 
 
\bibitem{BL} H. Bass and A. Lubotzky, Automorphisms of groups and of schemes of finite type, Israel J. Math. 44 (1983)1-22.



\bibitem{BPZ} O. Bezushchak, V. Petravchuk and E. Zelmanov, Automorphisms and Derivations of  Affine
Commutative  and PI-Algebras,
 Preprint (2022). 
\url{ https://arxiv.org/pdf/2211.10781.pdf}
 

 
\bibitem{BT} A. Borel and J. Tits. Homomorphismes ”abstraits” de groupes alg\'ebriques simples. Ann.
Math.128 (1973) 499-571.

 \bibitem{BF} R. Boutonnet and J.Ph. Furter,
 private communication.
 
\bibitem{Be} M. Bestvina, K. Bromberg, K. Fujiwara, J. Souto, Shearing coordinates and convexity of length functions on Teichmüller space, 
American Journal of Mathematics 135 (2013) 1449-1476.

\bibitem{B} P. Buser, {\it Geometry and Spectra of Compact Riemann Surfaces}, Birkh{\"a}user,
Progress in Math. 106 (1992).

\bibitem{C}
S. Cantat, Sur les groupes de transformations birationnelles des surfaces, Ann. of Math. 174 (2011) 299-340.

\bibitem{Co}
Y. Cornulier, Nonlinearity of some subgroups of the 
planar Cremona group, Preprint (2017).
\url{http://arxiv.org/abs/1701.00275v1}


\bibitem{D} M. Demazure, Sous-groupes alg\'ebriques de rang maximum du groupe de Cremona,
Ann. sc. de l'Ecole Normale Sup\'erieure, 4 (1970) 507-588.

\bibitem{DS} C. Drutu and M. Sapir,
Non-linear residually finite groups,
Journal of Algebra 284 (2005) 174–178.


\bibitem{FN}
W. Fenchel and J. Nielsen, {\it Discontinuous groups of isometries in the hyperbolic plane}, De Gruyter Studies
in Mathematics 29(2003).

\bibitem{FP} E. Formanek and C. Procesi, The automorphism group of a free group is not linear. J. Algebra 149 (1992) 494-499.

\bibitem{GR} H. Garland and M.S. Raghunathan,
Fundamental domains for lattices in
($\R$)-rank $1$ semi-simple Lie groups.
Ann. of Math. 2 (1970) 279-326.

\bibitem{IT} Y. Imagushi and M. Tanigushi,
{\it An Introduction to Teichm{\"u}ller spaces}
Springer-Verlag (1992).



\bibitem{L} S. Lamy, L'alternative de Tits pour $\Aut[\C^2 ]$, Journal of Algebra  239 (2001) 413-437.

\bibitem{MKS} W. Magnus, A. Karass and D. Solitar
{\it Combinatorial Group Theory}, Dover (1966).

\bibitem{N} H. Nagao, On $GL(2,K[x])$, 
J. Poly. Osaka Univ., 10 (1959) 117-121.

\bibitem{Na} M. Nagata, {\it On Automorphism Group of 
$k[x, y]$.} Kyoto University Lectures in Mathematics 5 (1972).

\bibitem{Pe} H. Petersson, Zur analytischen Theorie der Grenzkreisgruppen, III. Math. Ann. 115
(1938) 518-572.


\bibitem{P} V. Popov, Embeddings of Groups Aut($F_n$)
into Automorphism Groups of Algebraic Varieties,
Preprint (2022).
http://arxiv.org/abs/2106.02072v2

\bibitem{S83} J.P. Serre {\it Arbres, amalgames, $SL_2$} Ast\'erisque 49 (1977) or its translation  {\it Trees},
Springer-Verlag (1980).


\bibitem{S00} J.P. Serre, Le groupe de Cremona et ses sous-groupes finis, S\'eminaire Bourbaki 1000,
Ast\'erisque 332 (2010) 75-100.

\bibitem{SU} I. Shestakov and U. Urmibaev,
The tame and the wild automorphisms of polynomial rings in three variables, J. Am. Math. Soc. 17 (2004) 197-227.

\bibitem{T72} J. Tits, Free subgroups in linear groups. J. Algebra, 172 (1972) 250-270.

\bibitem{T82} J. Tits. R\'esum\'e de cours, Annuaire du 
Coll\`ege de France, 82e ann\'ee
(1981-1982) 91-105.

\bibitem{T87} J. Tits, Uniqueness and Presentation of Kac-Moody Groups over Fields, J. of Algebra 105 (1987) 542-573. 


\bibitem{T89} J. Tits,
Groupes associ\'es aux alg\`ebres de Kac-Moody,
Bourbaki Seminar 700. Ast\'erisque,  177-178 (1989) 7-31.

\bibitem{vdK} W. van der Kulk. On polynomial rings in two variables. Nieuw Arch. Wiskd.114 (1953) 33-41.

\bibitem{We} B.A.F. Wehrfritz, Generalized free products of linear groups, Proc. London Math. Soc. (27 (1973) 402-424.

 
\end{thebibliography}
\end{document}